\newtheorem{theorem}{Theorem}[section]
\newtheorem{corollary}[theorem]{Corollary}
\newtheorem{proposition}[theorem]{Proposition}
\newtheorem{lemma}[theorem]{Lemma}
\newtheorem{remark}[theorem]{Remark}
\theoremstyle{definition}
\newtheorem{definition}{Definition}
\newtheorem{assumption}{Assumption}
\newcommand{\Q}{{\mathbb {Q}}}
\newcommand{\R}{{\mathbb{R}}}
\newcommand{\Z}{{\mathbb{Z}}}
\newcommand{\N}{{\mathbb{N}}}
\newcommand{\vre}{{\varepsilon}}
\newcommand{\ssm}{{\smallsetminus}}
\newcommand {\ignore}[1] {}
\newcommand{\const}{\operatorname{const}}
\newcommand{\Leb}{\operatorname{Leb}}
\newcommand{\Id}{\operatorname{Id}}
\newcommand{\diameter}{\operatorname{diam}}
\numberwithin{equation}{section}
\newcommand {\comm}[1]   {\textcolor{red}{#1}}
\newcommand {\mynew}[1]   {\textcolor{brown}{#1}}
\newcommand {\mycomm}[1]   {\textcolor{purple}{#1}}
\newcommand\eq[2]{
\begin{equation}
\label{eq:#1}
{#2}
\end{equation}
}
\newcommand{\equ}[1]{\eqref{eq:#1}}
\newcommand{\supp}{\operatorname{supp}}
\newcommand{\dist}{\operatorname{dist}}
\title[Recurrence under Lipschitz Shifts]{Dynamical Borel--Cantelli Lemma for Recurrence under Lipschitz Twists}
\author{Dmitry Kleinbock}
\address{Department of Mathematics, Brandeis University, Waltham MA}
\email{kleinboc@brandeis.edu}
\author{Jiajie Zheng}
\address{Department of Mathematics, Brandeis University, Waltham MA}
 \email{zhengjiajie@brandeis.edu}
\begin{document}

	\begin{abstract}
In the study of some dynamical systems  the limsup set of a sequence of measurable sets is often of interest. The shrinking targets and recurrence are two of the most commonly studied problems that concern limsup sets. However, the zero-one laws for the shrinking targets and recurrence are usually treated separately and proved differently. In this paper, we introduce a generalized definition that can specialize into the shrinking targets and recurrence; our approach gives a unified proof of the zero-one laws for the two problems. 
\end{abstract}

\thanks{D.K.\ was  supported by  NSF grant  DMS-1900560. This material is based upon work supported by a grant from the Institute for Advanced Study School of Mathematics.
}
\date{October 19, 2022}


\maketitle


\section{Introduction}

Throughout the paper, let $(X,d)$ be a separable and compact metric space, and let $(X,\mu,T)$ be a probability measure preserving system. 
One of the most fundamental results in ergodic theory is the Poincar\'{e} Recurrence Theorem, see e.g.\  \cite[Theorem 2.11]{EW}, which asserts that almost all points in measurable dynamical systems return close to themselves under a measure-preserving map; namely, that 
\begin{equation}\label{prt}
\mu(R_T) = 1,
\end{equation}
where $R_T$ is the \textsl{set of recurrence} for $T$:
$$R_T:=\{x\in X:\liminf_{n\to\infty}d(T^nx,x)=0\}.$$

One of the first results concerning the speed of recurrence is due to Boshernitzan in \cite{bosh}. 
	Namely, assume that the $\alpha$-dimensional Hausdorff measure of $X$ is  {zero} for some $\alpha>0$. 
	Then
	 $$\liminf_{n\to\infty}n^{1/\alpha}d(T^nx,x)=0$$
for $\mu$-almost every $x\in X$. In other words, for a function
 $\psi:\mathbb{N}\to \mathbb{R}^+$  let us define the following set: 
\begin{equation*}
R_T(\psi):=\left\{x\in X:d(T^n x,x)<\psi(n) \text{ for infinitely many } n\in \mathbb{N}\right\}.
\end{equation*}
Then the Poincar\'{e} Recurrence Theorem says that the set  $$R_T = \bigcap_{\vre > 0} R_T(\vre 1_\N)$$ has full measure, and, with the notation \eq{psi1}{\psi_s(x) := x^{-s},}Bosherniztan's result   says that   $R_T(\vre\psi_{1/\alpha})$ has full measure for any $\vre > 0$ and for any $\alpha$ such that $\mathcal{H}^\alpha(X) = 0$.

It is a natural problem to find necessary and sufficient conditions on $\psi$ to guarantee that the set $R_T(\psi)$ has measure zero or one. 
{In fact, under some additional assumptions 
one expects this condition to be the convergence/divergence of the sum of measures of the sets 
\begin{equation}\label{recsets}
	A_T(n,\psi):=\left\{x\in X:d(T^n x,x)<\psi(n)\right\}.
\end{equation}}
And indeed this was  proved in several special cases {such as \cite{BF,CWW, hussain}; see also \cite{KKP, DFL, Pe} for similar results.}  

Note that a topic closely related to recurrence is the so-called \textsl{shrinking target problem}, which is concerned with determining the speed at which the orbit
of a $\mu$-typical point accumulates near a fixed point $y\in X$. More precisely, for  $y\in X$ one can define the set
$$R_T^y:=\left\{x\in X:\liminf_{n\to\infty}d(T^nx,y)=0\right\},$$
and, more generally, for a function
 $\psi:\mathbb{N}\to \mathbb{R}^+$ define
\begin{equation*}
R_T^y(\psi):=\left\{x\in X:d(T^n x,y)<\psi(n) \text{ for infinitely many } n\in \mathbb{N}\right\}.
\end{equation*}
Equivalently, letting  $B(x,r)$ stand for the open ball in $X$ centered in $x$ of radius $r$, we can write $R_T^y(\psi) =   \limsup A^y(n,\psi)$, where
\begin{equation}\label{shrsets}
	A_T^y(n,\psi):=\left\{x\in X:d(T^n x,y)<\psi(n)\right\} = T^{-n}B\big(y,\psi(n)\big).
\end{equation}
Clearly 
\begin{equation}\label{erg}
\mu(R_T^y) = 1\text{ for any $y\in\supp\mu$  \ \ if $T$ is ergodic};
\end{equation}
 furthermore, there have been plenty of results in the literature giving  $0$--$1$ laws for  $\mu\big(R_T^y(\psi)\big)$. In fact, one can often use mixing properties of $T$ to conclude that $\mu\big(R_T^y(\psi)\big)$ is equal to zero/one if and only if the series  
$$\sum_{n=1}^\infty\mu\big(A_T^y(n,\psi)\big) = \sum_{n=1}^\infty\mu\Big(B\big(y,\psi(n)\big)\Big)$$
converges/diverges. See \cite{Ph, CK, KM, FMP,  HNPV}  and many other references. 

The goal of the current paper is to study a property unifying these two settings, and to prove a zero--one law applying to both. Namely, for a Borel measurable function $f:X\to X$ define $R_T^f$, the set of \textsl{$f$-twisted recurrent} points for $T$, by  $$R_T^f:= \left\{x\in X: \liminf_{n\to\infty}d\big(T^n x,f(x)\big)=0\right\}.$$ 
The two previous settings correspond to $f$ being the identity and constant functions respectively.
We will show in the next section that $\mu(R_T^f) = 1$ for any measurable $f$ if $T$ is ergodic and $\mu$ has full support. Furthermore, one can study the rate of twisted recurrence as follows: 
for 
 $\psi:\mathbb{N}\to \mathbb{R}^+$ define
\begin{equation}\label{twisted}
R_T^f(\psi):=\left\{x\in X\left|\begin{aligned}d\big(T^n x,f(x) \big)<\psi(n) \ \ \\ \text{ for infinitely many } n\in \mathbb{N}\end{aligned}\right.\right\},
\end{equation}
so that $R_T^f =  \bigcap_{\vre > 0} R_T^f(\vre 1_X)$. 
In general  the rate of 
twisted recurrence can be arbitrary slow, see \S\ref{more} for examples.   The main goal of the paper is to prove, under assumptions similar to those of \cite{hussain}, a zero--one law for the sets $R_T^f(\psi)$ for a large class of functions $f$.

\smallskip
To state the main result of the paper, we need to adapt  and modify  the  settings and assumptions  from \cite{hussain}. Throughout the paper we write $a\lesssim b$ if $a\leq Cb$ for some constant $C>0$, and $a\asymp b$ if $a\lesssim b$ and $b\lesssim a$.

\smallskip
Our main assumption is that  there exist at most countably many pairwise disjoint open subsets $X_i$, $i\in\mathcal{I}$, of $X$ such that $T|_{X_i}$ is continuous and  injective for each $i$, and $\mu(X\ssm \cup_iX_i) = 0$. Those will be called \textsl{cylinders of order $1$}.
	Then for any $m\in\N$ one can define 
	\begin{equation}\label{cylinders}
	\mathcal{F}_m:=\left\{X_{i_1}\cap T^{-1}X_{i_2}\cap \cdots \cap T^{-(m-1)}X_{i_{m}}: i_1,\dots,i_{m}\in \mathcal{I}\right\}
\end{equation}
 to be the collection of \textsl{cylinders of order $m$}.
Note that for $J\in \mathcal{F}_m$ and $x,y\in J$, the points $T^n x$ and $T^n y$ are in the same partition set $X_i$ for $0\leq n<m$, and hence $T,\ldots,T^m$ are injective on $J$. Also, since $T$ is continuous, each cylinder in $\mathcal{F}_m$ is open. 


\smallskip

Now let us list our assumptions on the measure $\mu$. The first one  is \underbar{Ahlfors regularity} of dimension $\delta>0$; namely, that there exist positive real numbers $\eta_1,\eta_2,r_0$ such that 
	\begin{equation}\label{ar}
		\eta_1r^\delta \leq \mu(B(x,r))\leq \eta_2 r^\delta  \text{ for any ball $B(x,r)\subset X$ with }  {0<r<r_0}.	\end{equation}
		As a consequence, since $\mu$ was assumed to be a probability measure, the space $X$ has finite diameter.  
		
Next, we assume that $(X,\mu,T)$ is  \underbar{uniformly mixing} (a property introduced in \cite{FMP}), that is: there exist  
a summable
sequence of positive real numbers $(a_n)_{n\in \N}$ such
such that 
	\begin{equation}\label{em}
	\begin{aligned}
		\left|\mu (E\cap T^{-n}F)-\mu(E)\mu(F)\right|\leq a_n \mu(F)\ \ \\ \text{for any balls $E,F\subset X $ and for all }n\geq 1.
		\end{aligned}
	\end{equation}

Note that it was proved in \cite{FMP} that under the aforementioned mixing assumption, for any $y\in X$ and any $\psi$ the set $R_T^y(\psi)$ is null (resp., conull) if the series
$$ \sum_{n=1}^\infty\mu\big(B\big(y,\psi(n)\big)\big) \underset{\eqref{ar}}\asymp \sum_{n=1}^\infty\psi(n)^\delta$$
converges (resp., diverges). However, in order to similarly treat the sets $R_T^f(\psi)$ for more general functions $f$ we will require some more 
information on the expanding properties of $T$. For a $m$-cylinder $J$, we define $$K_{J}:=\inf_{x,y\in J,\,x\neq y}\frac{d(T^mx,T^my)}{d(x,y)},$$ and  impose the following additional assumptions:

\ignore{\smallskip
Namely, let us suppose that  there exist at most countably many pairwise disjoint open subsets $X_i$, $i\in\mathcal{I}$, of $X$ such that $T|_{X_i}$ is continuous and  injective for each $i$, and $\mu(X\ssm \cup_iX_i) = 0$. Those will be called \textsl{cylinders of order $1$}.
	Then define $\mathcal{F}_m$ to be the collection of \textsl{cylinders of order $m$},
\begin{equation*}
	\mathcal{F}_m:=\left\{X_{i_0}\cap T^{-1}X_{i_1}\cap \cdots \cap T^{-(m-1)}X_{i_{m-1}}: i_0,i_1,\ldots,i_{m-1}\in \mathcal{I}\right\}.
\end{equation*}
Note that for $J\in \mathcal{F}_m$ and $x,y\in J$, the points $T^n x$ and $T^n y$ are in the same partition set $X_i$ for $0\leq n<m$, and hence $T,\ldots,T^m$ are injective on $J$. Also, since $T$ is continuous, each cylinder in $\mathcal{F}_m$ is open. }

\begin{itemize}
\item \underbar{Bounded distortion}: There exists a constant  $K_1>0$ such that 
	\begin{equation}\label{eq10}
	\begin{aligned}
		&K_1^{-1}\leq \frac{d(T^mx,T^my)/d(x,y)}{d(T^mx,T^mz)/d(x,z)}\leq K_1\\
		\text{for all }m\in\N&\text{ and }  x,y,z\in J \in \mathcal{F}_m\text{ with $x\neq y$ and }x\neq z.
		\end{aligned}
\end{equation}

\item \underbar{Expanding properties}:  
	\begin{equation}\label{toinfty}\inf_{J \in \mathcal{F}_m}K_{J }\to \infty\text{ as }m\to\infty 
\end{equation}
and
\begin{equation}\label{qi}\sup_{m\in\N}\sum_{J \in \mathcal{F}_m}K_{J }^{-\delta}< \infty. 
\end{equation} 

\item \underbar{Conformality}: 
	There exists a constant $K_2\geq 1$ such that 
	\begin{equation}\label{conf}
	\begin{aligned}
	B(T^mx,K_2^{-1}K_{J}r)\subset T^mB(x,r)\subset B(T^mx,K_2K_{J}r)	\\\text{ for any } m\in\N\text{ and any ball }B(x,r)\subset J \in \mathcal{F}_m.\ \ 
	\end{aligned}
\end{equation}
\end{itemize}

\begin{remark}\label{constants} \rm Notice that the bounded distortion condition \eqref{eq10} implies the second inclusion in  \eqref{conf} with $K_2$ replaced by $K_1$. 
However the first inclusion there does not automatically follow from  \eqref{eq10}, hence the need for an additional condition.  \end{remark}

\begin{remark}\label{comparison} \rm We note that conditions \eqref{ar}--\eqref{conf} are essentially equivalent to Conditions I--V  from \cite{hussain}. Namely:
\begin{itemize}
\item
 \eqref{ar} is a slightly weaker version of  \cite[Condition I]{hussain}.
   \item
 \eqref{em} replaces  \cite[Condition II]{hussain} where the rate of mixing was assumed to be exponential.
   \item As for 
 \eqref{eq10}--\eqref{conf}, in \cite{hussain} the standing assumption was that the restriction of $T$ to $X_i$ for every $i$ is differentiable and expanding, namely  it was assumed that \begin{equation}\label{exp}{\|D_x(T^{-1})\|^{-1} > 1\text{ for any }x\in\cup_iX_i.}\end{equation} 
The role of  \eqref{eq10} was played there by \cite[Condition III]{hussain} stated as follows: there exists a constant  $K_1>0$ such that 
	\begin{equation*}\label{eq10hussain}
		K_1^{-1}\leq \frac{d(T^mx,T^my)}{d(x,y)\|D_xT^m\|}\leq K_1 \ 
		\forall \, m\in\N \text{ and } \forall \, x,y\in J \in \mathcal{F}_m\text{ with }x\neq y.
\end{equation*}
  \item Similarly, the constant $K_J$ for $J \in \mathcal{F}_m$ was defined in  \cite{hussain} by $K_{J}:=\inf_{x \in J}\|D_xT^m\|$, and the role of \eqref{toinfty} was played   by $\inf_{J\in \mathcal{F}_m} K_{J} > 1$ for some
$m\in\N$, which, in view of \eqref{exp}, is easily seen to be equivalent to  $\inf_{J \in \mathcal{F}_m}K_{J }\to \infty\text{ as }m\to\infty $.
Conditions IV and V of \cite{hussain} are identical to \eqref{qi} and \eqref{conf} respectively.
 \end{itemize}
 \end{remark}

Examples of dynamical systems satisfying conditions \eqref{ar}--\eqref{conf} include, as mentioned in  \cite{hussain},   $\beta$-transformations $$M_\beta:x\mapsto \beta x\mod{1}$$ of the unit interval, where $\beta \in\R_{> 1}$, as well as the Gauss map. In \S\ref{example} we add another example to the list:  expanding maps defined by  systems of contracting similarities with the open set condition.
 
Let us now specify the class of functions $f$ which we can treat by our technique. Say that $f:X\to X$ is \textsl{Lipschitz} if $$\sup_{x,y\in X,\,x\neq y}\frac{d\big(f(x),f(y)\big)}{d(x,y)} <\infty,$$ and that $f$ is \textsl{piecewise Lipschitz} if there exist at most countably many 
measurable subsets $Y_i$ of $X$ and Lipschitz functions $f_i:X\to X$, $i\in\mathcal{I}$, such that $\mu(X\ssm \cup_iY_i) = 0$ and $f|_{Y_i} = f_i$ for each $i$.
An example: when $X = [0,1]$, the function $f(x) = \sqrt{x}$ is piecewise Lipschitz but not  Lipschitz.

\smallskip
Now we are ready to state our main theorems.

\begin{theorem}\label{thm2.4}
	 Assume that $(X,\mu,T)$ satisfies conditions \eqref{ar}--\eqref{conf}. Then for any function $\psi:\mathbb{N}\to \mathbb{R}_+$ with $\lim_{n\to \infty}\psi(n)=0$ and any piecewise Lipschitz function $f:X\to X$,  the set $R_T^f(\psi)$ is null 
	  if and only if the series
\eq{series}{ \sum_{n=1}^\infty\psi(n)^\delta}
converges.
\end{theorem}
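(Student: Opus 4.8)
\emph{Reduction to Lipschitz $f$.} The plan is first to reduce to the case of a genuinely Lipschitz map. If $f$ is piecewise Lipschitz with pieces $Y_i$ and Lipschitz restrictions $f_i$ ($i\in\mathcal I$), then $d(T^nx,f(x))=d(T^nx,f_i(x))$ for every $n$ and every $x\in Y_i$, so $R_T^f(\psi)\cap Y_i=R_T^{f_i}(\psi)\cap Y_i$; since $\mathcal I$ is countable and $\mu(X\ssm\cup_iY_i)=0$, both $R_T^f(\psi)$ and $X\ssm R_T^f(\psi)$ equal, modulo null sets, $\bigcup_i\big(R_T^{f_i}(\psi)\cap Y_i\big)$ and $\bigcup_i\big(Y_i\ssm R_T^{f_i}(\psi)\big)$ respectively. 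Hence, once the theorem is known for Lipschitz maps — in the strengthened form that $R_T^{f_i}(\psi)$ is \emph{conull} when \equ{series} diverges — it follows for $f$: the convergence case because a countable union of null sets is null, the divergence case because each $Y_i\ssm R_T^{f_i}(\psi)$ is then null. So assume $f$ Lipschitz with constant $L$, and write $A_n:=\{x\in X:d(T^nx,f(x))<\psi(n)\}$, so that $R_T^f(\psi)=\limsup_n A_n$.

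\emph{Measure estimates and the convergence half.} The basic local fact is that, for $m$ large enough that $\inf_{J\in\mathcal F_m}K_J\ge 2L$ (possible by \eqref{toinfty}) and any $J\in\mathcal F_m$, the set $A_m\cap J$ has diameter $\lesssim K_J^{-1}\psi(m)$ — indeed if $x,y\in A_m\cap J$ then $K_Jd(x,y)\le d(T^mx,T^my)\le 2\psi(m)+Ld(x,y)$ — so by \eqref{ar}, $\mu(A_m\cap J)\lesssim\big(K_J^{-1}\psi(m)\big)^\delta$. For the reverse bound I would produce a twisted fixed point: when $f(J)\subset T^mJ$ (which \eqref{conf} and the expansion let one arrange for $m$ large) the map $(T^m|_J)^{-1}\circ f$, a contraction of Lipschitz constant $\lesssim L/K_J<1$ by \eqref{eq10}, has a fixed point $x_J\in J$ with $T^mx_J=f(x_J)$, and then $B\big(x_J,cK_J^{-1}\psi(m)\big)\subset A_m\cap J$ for small $c$ (again by \eqref{eq10} and the triangle inequality), so $\mu(A_m\cap J)\gtrsim\big(K_J^{-1}\psi(m)\big)^\delta$. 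Summing over the mod‑$\mu$ partition $\mathcal F_m$, using \eqref{qi} together with the elementary $\sum_{J\in\mathcal F_m}K_J^{-\delta}\asymp 1$ (the lower bound from $1=\sum_J\mu(J)\lesssim\sum_J K_J^{-\delta}$), and doing the same count over cylinders meeting a fixed ball $B$, yields $\mu(A_m)\asymp\psi(m)^\delta$ and $\mu(A_m\cap B)\asymp\mu(B)\psi(m)^\delta$ for all large $m$. The convergence half is then immediate: if \equ{series} converges then $\sum_m\mu(A_m)<\infty$, and the first Borel--Cantelli lemma gives $\mu\big(R_T^f(\psi)\big)=0$.

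\emph{The divergence half.} Assume \equ{series} diverges, so $\sum_m\mu(A_m)=\infty$. The crux is a quasi‑independence estimate $\mu(A_m\cap A_n)\lesssim\mu(A_m)\mu(A_n)+a_{n-m}\mu(A_n)$ for $m<n$ (and its localization to a ball $B$). Granting it, a quantitative (Kochen--Stone‑type) divergence Borel--Cantelli lemma applies: since $(a_n)$ is summable, $\sum_{m<n\le N}a_{n-m}\mu(A_n)=O\big(\sum_{n\le N}\mu(A_n)\big)=o\big((\sum_{n\le N}\mu(A_n))^2\big)$, so $\mu\big(\limsup_n(A_n\cap B)\big)\gtrsim\mu(B)$ with constant independent of $B$; as $\mu$ is doubling by \eqref{ar}, the Lebesgue density theorem forces $\mu\big(R_T^f(\psi)\big)=1$, the conull conclusion needed above. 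To prove the quasi‑independence estimate I would decompose $A_m=\bigsqcup_{J\in\mathcal F_m}(A_m\cap J)$; each $A_m\cap J$ is squeezed between two balls of radius $\asymp K_J^{-1}\psi(m)$ about $x_J$, and — crucially — by \eqref{conf} the map $T^m$ sends such a ball onto a set comparable to $B\big(f(x_J),\psi(m)\big)$, of measure $\asymp\psi(m)^\delta$ \emph{uniformly in $J$}. Transporting the condition defining $A_n$ through $T^m$ (using that $f$ varies by only $O(K_J^{-1}\psi(m))$ over $A_m\cap J$) and applying \eqref{ar} and \eqref{conf} to this conformal change of variables gives $\mu(A_m\cap A_n\cap J)\asymp K_J^{-\delta}\,\mu\big(B(f(x_J),\psi(m))\cap T^{-(n-m)}B(f(x_J),\psi(n))\big)$; then \eqref{em}, applied to these two balls with index $n-m$, bounds the last factor by $\big(\psi(m)^\delta+a_{n-m}\big)\psi(n)^\delta$ up to constants, and summing over $\mathcal F_m$ (again with $\sum_J K_J^{-\delta}\asymp1$) produces the claimed bound.

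\emph{Expected main obstacle.} The quasi‑independence estimate is the technical heart, and is what makes the unified statement harder than the classical shrinking‑target case (where $f$ is constant, $A_n=T^{-n}B(y,\psi(n))$, and \eqref{em} is applied directly). The difficulty is twofold: (i) the target $f(x)$ drifts with $x$, so one must control the small variation of $f$ across each cylinder — this is precisely why the twisted‑fixed‑point normalization is forced on us, so that the balls $B(f(x_J),\psi(m))$ have a size independent of $J$ before \eqref{em} is invoked; and (ii) $\psi$ is only assumed to vanish at infinity, so $\psi(n)$ may be far smaller than the scale $K_J^{-1}\psi(m)$ on which $f$ varies, and the naive approximation $f\approx\mathrm{const}$ loses too much on the close pairs $(m,n)$. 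Handling (ii) requires either a preliminary passage to a minorant of $\psi$ for which \equ{series} still diverges, or a secondary decomposition of each $A_m\cap J$ into cylinders of order $n-m$ — on which the same expansion‑versus‑Lipschitz diameter bound applies to $T^{n-m}$ and to the contracted function $(T^m|_J)^{-1}\circ f$, with \eqref{toinfty} making those finer cylinders negligibly small. This bookkeeping, which is where the present setting genuinely departs from — yet closely parallels — the argument of \cite{hussain}, is the part I expect to be most delicate.
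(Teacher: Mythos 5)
Your plan departs from the paper's argument in a way that introduces a genuine gap. The central normalization you propose — finding, in each high-order cylinder $J\in\mathcal F_m$, a twisted fixed point $x_J$ with $T^m x_J=f(x_J)$, and deducing $B(x_J,cK_J^{-1}\psi(m))\subset A_m\cap J$, hence $\mu(A_m\cap J)\gtrsim (K_J^{-1}\psi(m))^\delta$ — requires $f(J)\subset T^mJ$, and that inclusion is not something conditions \eqref{toinfty} and \eqref{conf} can ``arrange for $m$ large.'' The set $T^mJ$ need not be all of $X$ (for $\beta$-transformations, say, $T^mJ$ is a proper subinterval whenever the last coordinate of the cylinder is the boundary block; more generally pseudo-Markov is exactly the hypothesis controlling $T^mJ$), and $f(J)$ can lie entirely outside it, in which case $A_m\cap J=\varnothing$ and there is neither a fixed point nor a per-cylinder lower bound. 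Consequently you cannot obtain $\mu(A_m)\asymp\psi(m)^\delta$ (or its local version over a ball $B$) by summing cylinder contributions. The paper's route avoids this entirely: it never seeks a lower bound cylinder-by-cylinder. Instead, the lower bound on $\mu(A_n)$ (Lemma \ref{lemma4}) and on $\mu(A_n\cap B)$ (Lemma \ref{local4}) comes from a Vitali $5r$-covering of $X$ (resp.\ of $B$) by balls $E=B(y_j,\psi(n)/2p)$ — small enough that $f$ is nearly constant on each — followed by Lemma \ref{lemma1} and the mixing estimate \eqref{em}; mixing is what guarantees $T^{-n}(\cdot)$ intersects $E$ in a set of the right size, regardless of where $f$ sends $E$. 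Cylinders enter only for the upper bound on $\mu(J\cap A_m\cap A_n)$ (Lemma \ref{localqilemma}), where one merely picks \emph{any} $z\in J\cap A_m$ — no fixed point needed — and the close-pair case ($pr>\psi(n)$) is handled by covering $B(f(z),pr)$ by $\asymp(pr/\psi(n))^\delta$ balls of radius $\psi(n)$; your suggested alternatives (passing to a minorant of $\psi$, or re-decomposing into cylinders of order $n-m$) are not what is done and are not obviously equivalent.

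A second structural mismatch: you set out to prove that $R_T^f(\psi)$ is \emph{conull} in the divergence case, both because your piecewise reduction needs it and because you want to invoke the Lebesgue density theorem. But the statement you are asked to prove only asserts \emph{positive} measure (``not null''), and the paper proves exactly that, via the global quasi-independence estimate (Proposition \ref{globalqi}) fed into the Chung--Erd\"os inequality (Lemmas \ref{cei}, \ref{posmeas}). The localized quasi-independence $\mu(B\cap A_m\cap A_n)\lesssim\mu(B)[\cdots]$ that your density argument needs is proved in the paper only under the additional pseudo-Markov hypothesis (Corollary \ref{localqi}, resting on Lemma \ref{lemma4.3}, which gives $\mu(J)\gtrsim K_J^{-\delta}$), and is part of the proof of Theorem \ref{thm3}, not of Theorem \ref{thm2.4}. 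Without pseudo-Markov, $\sum_{J\cap B\neq\varnothing}K_J^{-\delta}$ need not be $\lesssim\mu(B)$, and the localization collapses. So even discounting the fixed-point issue, your plan proves a statement that in this generality is not known to hold; the theorem at hand requires only positive measure and the paper's Chung--Erd\"os route. The convergence half of your proposal (diameter bound plus first Borel--Cantelli) is correct and coincides with the paper's.
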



It is natural to expect that  Theorem \ref{thm2.4} can be strengthened to the full measure of $R_T^f(\psi)$ in the case when 
the series
\equ{series} diverges. This was done in \cite{hussain} in the case $f=\Id_X$.
Unfortunately for an arbitrary Lipschitz function $f$ 
the full measure conclusion is outside of our reach.
In the following theorems we handle several special cases. First, employing an argument from \cite{hussain}, we prove

\begin{theorem}\label{Thm2}
	Let $(X,\mu,T)$, $\psi:\mathbb{N}\to \mathbb{R}$ and $f:X\to X$ be as in Theorem \ref{thm2.4}. Furthermore, assume that \eq{comm}{T\circ f=f\circ T.} Then $\mu\big(R_T^f(\psi)\big)=1$
	whenever the series
\equ{series} diverges.  
\end{theorem}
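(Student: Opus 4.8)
The plan is to reduce the claim to showing $\mu\big(R_T^f(\psi)\big)=1$ when $\sum_n\psi(n)^\delta$ diverges, to obtain positivity for free from Theorem~\ref{thm2.4}, and then to upgrade positivity to full measure by a quantitative Borel--Cantelli argument in which the commutation relation controls the two‑fold correlations.

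First, by the contrapositive of the ``only if'' direction of Theorem~\ref{thm2.4}, divergence of \equ{series} forces $\mu\big(R_T^f(\psi)\big)>0$, so it remains to exclude $0<\mu\big(R_T^f(\psi)\big)<1$. Write $R_T^f(\psi)=\limsup_n A_T^f(n,\psi)$ with $A_T^f(n,\psi)$ as in \eqref{recsets}. Passing to the subset of indices $n$ on which $\psi(n)$ is large enough to be resolved by cylinders of order $\le n$ only shrinks $\limsup_n A_T^f(n,\psi)$ (harmless for a full‑measure conclusion) and, since the discarded indices contribute a summable amount of $\psi(n)^\delta$, retains divergence of $\sum_n\mu\big(A_T^f(n,\psi)\big)\asymp\sum_n\psi(n)^\delta$; so we may assume each $A_T^f(n,\psi)$ is controlled by the cylinder partition $\mathcal F_{\ell(n)}$ with $\ell(n)\le\varepsilon n$ and $\diameter(\mathcal F_{\ell(n)})\lesssim\psi(n)$.

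The heart of the matter is a pair of estimates, both obtained by localizing on cylinders and using \eqref{eq10}--\eqref{conf} together with uniform mixing \eqref{em}. (i) A \emph{local density bound} $\mu\big(A_T^f(n,\psi)\cap B\big)\gtrsim\mu\big(A_T^f(n,\psi)\big)\mu(B)\asymp\psi(n)^\delta\mu(B)$ for every ball $B$: cover $B$ by cylinders $J\in\mathcal F_{\ell(n)}$ of diameter at most $\psi(n)/(2L)$ ($L$ a Lipschitz constant on the relevant piece of the piecewise Lipschitz $f$, the cylinders meeting more than one piece being negligible), replace $f|_J$ by the constant $f(c_J)$ up to an error $\le\psi(n)/2$, so that $\{x\in J:T^nx\in B(f(c_J),\psi(n)/2)\}\subseteq A_T^f(n,\psi)\cap J$, and estimate $\mu\big(J\cap T^{-n}B(f(c_J),\psi(n)/2)\big)\asymp\mu(J)\psi(n)^\delta$ by writing $T^n|_J=T^{\,n-\ell(n)}\circ T^{\ell(n)}|_J$, the second factor having essentially full image and the first being amenable to \eqref{em} since $n-\ell(n)\to\infty$; summing over $J$ gives the claim. (ii) A \emph{quasi‑independence bound} $\mu\big(A_T^f(m,\psi)\cap A_T^f(n,\psi)\cap B\big)\lesssim\big(\psi(m)^\delta\psi(n)^\delta+a_{n-m}\psi(n)^\delta\big)\mu(B)$ for $m<n$: conditioning on $J\in\mathcal F_{\ell(m)}$ and pushing forward by $T^m$, the $m$‑event becomes a small‑ball condition at scale $\psi(m)$ while the $n$‑event becomes $d\big(T^{\,n-m}(T^mx),\,f(\phi_J(T^mx))\big)<\psi(n)$, where $\phi_J$ is the inverse branch of $T^m|_J$. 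This is where $T\circ f=f\circ T$ is used: the identity $T^m\circ f=f\circ T^m$ exhibits $f\circ\phi_J$ as an inverse branch of $T^m$ composed with $f$, hence $f\circ\phi_J$ varies over $T^mJ$ with a small Lipschitz constant; after one further sub‑decomposition the $n$‑event also linearizes, and \eqref{em} at gap $n-m$ yields the product bound (with the usual error $a_{n-m}$). The pairs with $n-m$ comparable to $\ell(n)$ are bounded crudely by $O\big(\sum_n\ell(n)\psi(n)^\delta\big)=O\big((\sum_n\psi(n)^\delta)^2\big)$ and are harmless.

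Feeding (i) and (ii) into the standard divergence Borel--Cantelli lemma (quasi‑independence on average, localized) yields $\mu\big(R_T^f(\psi)\cap B\big)\gtrsim\mu(B)$ for every ball $B$, and since $\mu$ has full support (a consequence of \eqref{ar}) the Lebesgue density theorem forces $\mu\big(R_T^f(\psi)\big)=1$. I expect estimate (ii) to be the main obstacle: one must linearize two events at the incomparable scales $\psi(m)$ and $\psi(n)$ simultaneously while pushing through the cylinder dynamics, and it is precisely the endomorphism property $f(T^jx)=T^jf(x)$ that keeps the moving target $f(x)$ synchronized with the orbit — for a general Lipschitz $f$ the target drifts relative to the dynamics and this correlation bound appears to break down, which is why the full‑measure conclusion is only available in this (and the other special) cases. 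One cannot instead argue by invariance and ergodicity, because the non‑injectivity of $T$ prevents $R_T^f(\psi)$ from being $T$‑invariant even modulo null sets: an orbit may approach a point of $T^{-1}\!\big(\{f(Tx)\}\big)$ other than $f(x)$.
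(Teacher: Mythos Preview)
Your route is quite different from the paper's, and considerably more elaborate. Instead of localizing the quasi-independence estimate, the paper introduces the auxiliary set
\[
R'(\psi)=\Bigl\{x\in X:\ \liminf_{n\to\infty}\psi(n)^{-1}d\bigl(T^nx,f(x)\bigr)<\infty\Bigr\},
\]
which contains $R_T^f(\psi)$ and hence has positive measure by Theorem~\ref{thm2.4}. The commutation relation is used only to show that $R'(\psi)$ is essentially forward $T$-invariant: if $d(T^{n_k}x,f(x))<c(x)\psi(n_k)$ along a subsequence, then eventually $T^{n_k}x$ and $f(x)$ lie in a common $X_i$, and since $T|_{X_i}$ is bi-Lipschitz (by bounded distortion) one gets
\[
d\bigl(T^{n_k}(Tx),f(Tx)\bigr)=d\bigl(T(T^{n_k}x),T(f(x))\bigr)\lesssim d\bigl(T^{n_k}x,f(x)\bigr),
\]
so $Tx\in R'(\psi)$. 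Ergodicity then forces $\mu(R'(\psi))=1$. A standard slowing-down trick (replace $\psi$ by $\tilde\psi=\psi/\ell$ with $\ell(n)\to\infty$ slowly enough that $\sum\tilde\psi(n)^\delta=\infty$, apply the above, and use Egorov) upgrades this to $\mu(R_T^f(\psi))=1$.

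Your last paragraph explicitly rejects this invariance/ergodicity route. You are right that $R_T^f(\psi)$ itself need not be $T$-invariant, but the paper sidesteps this by passing to $R'(\psi)$, whose definition allows an arbitrary multiplicative constant and thereby absorbs the local expansion of $T$. That is the key idea you are missing; once Theorem~\ref{thm2.4} is in hand it makes the proof a few lines.

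As for your own sketch, step~(ii) has a genuine gap. The identity $T^m\circ f=f\circ T^m$ does show that $f\circ\phi_J$ sends $y$ to \emph{some} $T^m$-preimage of $f(y)$, but which inverse branch is selected depends on where $f(\phi_J(y))$ lands; for $f\circ\phi_J=\phi_{J'}\circ f$ to hold uniformly on $T^mJ$ you would need $f(J)$ to sit inside a single $m$-cylinder $J'$, which is not automatic. Your ``further sub-decomposition'' then meets the same obstruction as in the general case: to turn the cylinder-wise bound of Lemma~\ref{localqilemma} into a localized one, one must control $\sum_{J\subset I}K_J^{-\delta}$ in terms of $\mu(I)$, and commutation gives no purchase on this purely combinatorial estimate --- that is exactly what the pseudo-Markov hypothesis supplies in Theorem~\ref{thm3}. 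So even if your plan can be patched, it amounts to rebuilding the pseudo-Markov machinery under a hypothesis that the paper handles by a much simpler invariance argument.
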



Clearly  \equ{comm} holds when $f = \const$ or $f = \Id_X$, but not in general. Next we present an alternative approach to upgrading Theorem \ref{thm2.4} to a full measure result, requiring introducing additional assumptions on $(X,\mu,T)$.

\smallskip

Namely, let $\{X_i\}_{i\in  \mathcal{I}}$ be as defined above. We say the partition $\{X_i\}_{i\in \mathcal{I}}$ is \textsl{pseudo-Markov} with respect to $T$ if 
\begin{itemize}
\item for all $i,j\in \mathcal{I}$, $TX_i$ is measurable;
\item  $TX_i\cap X_j\neq \varnothing$ implies $X_j\subset TX_i$  for any $i,j\in  \mathcal{I}$;
\item there exists   $\tau>0$ such that  $\mu(TX_i)\geq \tau\mu(X)$ for any $i\in  \mathcal{I}$. 
\end{itemize}

\begin{theorem}\label{thm3}
	Let $(X,\mu,T)$, $\psi:\mathbb{N}\to \mathbb{R}$ and $f:X\to X$ be as in Theorem \ref{thm2.4}. Furthermore, assume that $\{X_i\}_{i\in \mathcal{I}}$ is pseudo-Markov.
	Then $\mu\big(R_T^f(\psi)\big)=1$
	whenever the series
\equ{series} diverges.  
\end{theorem}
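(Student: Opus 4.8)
The plan is to derive Theorem~\ref{thm3} from Theorem~\ref{thm2.4} by a localization argument: the pseudo-Markov hypothesis will be used to show that $R_T^f(\psi)$ fills a definite fraction of \emph{every} cylinder of sufficiently large order, and a Lebesgue density argument will then promote this to full measure. Write $E:=R_T^f(\psi)$ and $A_n:=\{x\in X:d(T^nx,f(x))<\psi(n)\}$, so $E=\limsup_nA_n$. From \eqref{toinfty}, \eqref{conf} and \eqref{ar} one gets that cylinders of order $m$ shrink uniformly (indeed $\sup_{J\in\mathcal{F}_m}\diameter J\le\diameter X\cdot(\inf_{J\in\mathcal{F}_m}K_J)^{-1}\to 0$), that $\mu$-a.e.\ $x$ lies in a cylinder $J_m(x)\in\mathcal{F}_m$ for every $m$, and that $\mu(J_m(x))\asymp(\diameter J_m(x))^\delta\asymp\mu(B(x,\diameter J_m(x)))$. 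Consequently, it suffices to prove that there is $c>0$ with $\mu(E\cap J)\ge c\,\mu(J)$ for every cylinder $J$ of sufficiently large order: if $\mu(X\ssm E)>0$, a density point $x_0$ of $X\ssm E$ would satisfy $\mu(E\cap J_m(x_0))/\mu(J_m(x_0))\to 0$ along the nested cylinders $J_m(x_0)\ni x_0$, a contradiction.

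The key step is to reduce this uniform lower bound to a statement about (a large piece of) the whole space. Fix a large $m$ and a cylinder $J=X_{i_1}\cap T^{-1}X_{i_2}\cap\cdots\cap T^{-(m-1)}X_{i_m}\in\mathcal{F}_m$. Using the pseudo-Markov conditions one checks inductively that $TJ\in\mathcal{F}_{m-1}$ and, iterating, that $T^{m-1}J=X_{i_m}$; hence $T^mJ=TX_{i_m}=:W$, a union of order-$1$ cylinders with $\mu(W)\ge\tau\mu(X)$, and $T^m|_J$ is a bijection onto $W$. Let $S:=(T^m|_J)^{-1}:W\to J$; since $d(T^mx,T^my)\ge K_J\,d(x,y)$ on $J$, $S$ is Lipschitz with constant $\le K_J^{-1}$, which tends to $0$ uniformly over $J\in\mathcal{F}_m$ as $m\to\infty$ by \eqref{toinfty}. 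The bounds \eqref{eq10}, \eqref{conf} and \eqref{ar} imply that $S$ distorts $\mu$ by a factor $\asymp K_J^{-\delta}$, uniformly in $J$; in particular $\mu(SF)\asymp K_J^{-\delta}\mu(F)$ for all measurable $F\subset W$, so $\mu(J)\asymp K_J^{-\delta}\mu(W)$. Now for $n>m$ and $x=Sy$ with $y\in W$ we have $d(T^nx,f(x))=d(T^{n-m}y,(f\circ S)(y))$, so setting $g:=f\circ S$ (piecewise Lipschitz, being the composition of the piecewise Lipschitz $f$ with the contraction $S$) and $\tilde\psi(k):=\psi(k+m)$ (again tending to $0$, with $\sum_k\tilde\psi(k)^\delta=\infty$), one obtains $E\cap J=S(R_T^{g}(\tilde\psi)\cap W)$ and hence $\mu(E\cap J)/\mu(J)\asymp\mu(R_T^{g}(\tilde\psi)\cap W)/\mu(W)$. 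It therefore suffices to find $c_0>0$, depending only on $\tau$ and the structural constants of $(X,\mu,T)$, so that $\mu(R_T^{g}(\tilde\psi)\cap W)\ge c_0$ for every piecewise Lipschitz $g$, every union $W$ of order-$1$ cylinders with $\mu(W)\ge\tau\mu(X)$, and every $\tilde\psi$ as above.

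This last estimate is obtained by re-running the divergence part of the proof of Theorem~\ref{thm2.4} with the set $W$ carried through the estimates. With $B_k:=\{x:d(T^kx,g(x))<\tilde\psi(k)\}$, the combination of uniform mixing \eqref{em}, Ahlfors regularity \eqref{ar}, and the distortion and conformality bounds \eqref{eq10}--\eqref{conf}---the latter applied to the sub-collection of cylinders contained in $W$, which inherits all of \eqref{ar}--\eqref{conf}---gives, exactly as there, a lower bound $\mu(B_k\cap W)\gtrsim\mu(W)\,\tilde\psi(k)^\delta$ for all large $k$, together with a quasi-independence bound $\mu(B_j\cap B_k\cap W)\lesssim\mu(B_j\cap W)\,\mu(B_k\cap W)/\mu(W)+(\text{summable error})$, all implied constants being uniform in $g$ and $W$. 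Since $\sum_k\tilde\psi(k)^\delta=\infty$ and $\mu(W)\ge\tau\mu(X)$, the second-moment (Kochen--Stone / Paley--Zygmund) inequality then yields $\mu(\limsup_k(B_k\cap W))\ge c_0$ with $c_0$ depending only on $\tau$ and the structural constants, which is what we wanted.

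The main obstacle is this uniform quasi-independence estimate: one has to verify that the error terms produced by \eqref{em} in the bound for $\mu(B_j\cap B_k\cap W)$, as well as the comparison between $B_k$ and an honest $T^{-k}$-preimage of a ball as in \eqref{shrsets}, remain controllable with constants independent of the twist $g$ and of $W$. It helps here that $g=f\circ S$ has Lipschitz constant tending to $0$, so that $g$ is nearly constant on $W$ and each $B_k$ is close to such a union of preimages; this is essentially the mechanism already exploited in \cite{FMP,hussain}. Two routine points remain: dealing with the piecewise-Lipschitz nature of $f$ (split $J$ along the pieces $Y_i$, on each of which the twist is genuinely Lipschitz, and use that $\mu(J\cap Y_i)/\mu(J)$ is well behaved at density points), and justifying the shrinking, measure-comparison and a.e.-nesting statements for cylinders used in the first paragraph, all of which follow from \eqref{ar}, \eqref{conf} and \eqref{toinfty}.
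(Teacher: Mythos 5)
Your overall architecture coincides with the paper's: produce a uniform lower bound for $\mu\bigl(R_T^f(\psi)\cap\,\cdot\,\bigr)$ on a family of localizing sets, via a second-moment (Chung--Erd\"os) inequality, and then invoke the Lebesgue Density Theorem. What differs is the localization mechanism, and that difference is genuine. The paper works directly with balls: it establishes a local version of the measure estimate (Lemma~\ref{local4}) and a local quasi-independence estimate (Corollary~\ref{localqi}) for every small ball $B$, using the pseudo-Markov hypothesis through Lemma~\ref{equality} and, crucially, through the bound $\mu(J)\gtrsim K_J^{-\delta}$ of Lemma~\ref{lemma4.3}. You instead localize to cylinders $J\in\mathcal F_m$ and conjugate by the inverse branch $S=(T^m|_J)^{-1}:W\to J$, where $W=T^mJ=TX_{i_m}$; this transports the twisted-recurrence condition on $J$ to a twisted-recurrence condition on the macroscopic set $W$ with the new twist $g=f\circ S$, whose Lipschitz constant is $\le pK_J^{-1}\to 0$. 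That observation is a nice structural point: the pseudo-Markov hypothesis manifests here as saying that, after passing to inverse branches, the twisted problem becomes ``nearly'' a shrinking-target problem.

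That said, your proposal is a sketch and the step you flag as ``the main obstacle'' is exactly where the real work of Section~6 of the paper lives, and the conjugation does not bypass it. To run the Chung--Erd\"os argument on $W$ you need (i) a lower bound $\mu(B_k\cap W)\gtrsim\mu(W)\tilde\psi(k)^\delta$ for $k$ large and (ii) an upper bound $\mu(B_j\cap B_k\cap W)\lesssim\mu(W)\bigl[\tilde\psi(j)^\delta\tilde\psi(k)^\delta+a_{k-j}\tilde\psi(k)^\delta+a_k\tilde\psi(j)^\delta\bigr]$, with uniform implied constants. The phrase ``the sub-collection of cylinders contained in $W$ inherits all of \eqref{ar}--\eqref{conf}'' is not quite the right formulation: those are properties of the ambient system, and in particular \eqref{em} does not ``restrict'' to $W$ in any direct sense. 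What actually delivers (ii) is precisely the decomposition of $W$ into cylinders $J'\subset W$ together with Lemma~\ref{localqilemma} and the pseudo-Markov bound $\mu(J')\gtrsim K_{J'}^{-\delta}$ of Lemma~\ref{lemma4.3}; and what delivers (i) is a finite Vitali covering of a positive-measure subset of $W$ by balls $B(x_i,r_i)\subset W$ together with Lemma~\ref{local4}, taking the Chung--Erd\"os sums only over $k>\max_i n_{r_i}$. Both steps are the same ingredients the paper proves. Two smaller points: (1) in your opening paragraph the assertion $\mu(J_m(x))\asymp(\diameter J_m(x))^\delta$ is \emph{not} a consequence of \eqref{toinfty}, \eqref{conf}, \eqref{ar} alone; the lower bound $\mu(J)\gtrsim(\diameter J)^\delta$ already uses pseudo-Markov via Lemma~\ref{lemma4.3} combined with Lemma~\ref{lemma3.7}, so you are implicitly using the hypothesis one step earlier than advertised. (2) Since the paper reduces at the start of Section~3 to $f$ Lipschitz, $g=f\circ S$ is honestly Lipschitz on $W$, so the ``piecewise'' clause in your parenthetical is not needed. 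In summary: the conjugation is a clean and genuinely different way to organize the proof, and it is sound in principle, but the effort is essentially conserved, and the sketch leaves the quantitative localization on $W$ --- the content of Lemmas~\ref{local4}, \ref{lemma4.3} and Corollary~\ref{localqi} --- to the reader.
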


Examples of systems with pseudo-Markov (in fact, truly Markov) partitions include the Gauss map, the multiplication map $M_b$ where $b\ge 2$ is an integer, and, more generally, conformal expanders described in \S\ref{example}. One can also show that  $\beta$-transformations for some specific $\beta$ admit pseudo-Markov  partitions.
This is however not true for arbitrary $\beta$. Yet, the twisted recurrence set-up was recently considered in \cite{LVW} for   $T = M_\beta$, where $\beta > 1$ is arbitrary, establishing  the conclusion of  {Theorems \ref{Thm2} and \ref{thm3}} in that case. Namely they prove

\begin{theorem}\label{thmbeta} 
Let $X = [0,1]$, $T = M_\beta$, $\mu$ the $M_\beta$-invariant probability measure on $[0,1]$, and let $\psi:\mathbb{N}\to \mathbb{R}$ and $f:X\to X$ be as in Theorem \ref{thm2.4}. 	
	Then $\mu\big(R_T^f(\psi)\big)$ is equal to $0$ (resp., $1$) 
	whenever the series
\equ{series} converges (resp., diverges).
\end{theorem}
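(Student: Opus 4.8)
The convergence half needs no new work: $\beta$-transformations satisfy \eqref{ar}--\eqref{conf}, and since the Parry measure $\mu$ has density bounded above and below we have $\delta=1$, so Theorem~\ref{thm2.4} applies and gives $\mu\big(R_{M_\beta}^f(\psi)\big)=0$ whenever $\sum_n\psi(n)=\sum_n\psi(n)^\delta$ converges, while in the divergent case it gives at least $\mu\big(R_{M_\beta}^f(\psi)\big)>0$. So the real task is to upgrade this to full measure, and --- since $M_\beta$ need not carry a pseudo-Markov partition and $f$ need not commute with $M_\beta$ --- this is not covered by Theorems~\ref{Thm2} or~\ref{thm3}. Since $f|_{Y_i}=f_i$ with $f_i$ Lipschitz and $\mu(\cup_iY_i)=1$, and $R_{M_\beta}^f(\psi)\cap Y_i=R_{M_\beta}^{f_i}(\psi)\cap Y_i$, proving $\mu\big(R_{M_\beta}^{f_i}(\psi)\big)=1$ for each $i$ yields $\mu\big(R_{M_\beta}^f(\psi)\big)\ge\mu(\cup_iY_i)=1$; so assume from now on that $f$ is Lipschitz with constant $L$, and write $E:=R_{M_\beta}^f(\psi)=\limsup_nA_n$ with $A_n:=\{x:d(M_\beta^nx,f(x))<\psi(n)\}$.

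My plan is to reduce the full-measure statement to the positive-measure statement above by localizing on full cylinders. A cylinder $J\in\mathcal F_m$ is \emph{full} if $M_\beta^m(J)=[0,1)$; then $|J|=\beta^{-m}$, $M_\beta^m|_J$ is affine with slope $\beta^m$, and $\phi_J:=(M_\beta^m|_J)^{-1}\colon[0,1)\to J$ is affine with slope $\beta^{-m}$. For $n>m$ and $y\in[0,1)$ one has $M_\beta^n\phi_J(y)=M_\beta^{n-m}(y)$, whence
\[
\phi_J^{-1}\big(E\cap J\big)=R_{M_\beta}^{g_J}\big(\psi^{(m)}\big),\qquad g_J:=f\circ\phi_J,\quad\psi^{(m)}(k):=\psi(k+m).
\]
Here $g_J$ is Lipschitz with constant $\le L\beta^{-m}\le L$, $\psi^{(m)}\to0$, and $\sum_k\psi^{(m)}(k)=\sum_{n>m}\psi(n)=\infty$. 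What I need is a \emph{uniform} positive lower bound: a constant $c_0>0$, depending only on the constants in \eqref{ar}--\eqref{conf} and on $L$, with $\mu\big(R_{M_\beta}^{g}(\varphi)\big)\ge c_0$ for every Lipschitz $g$ of constant $\le L$ and every $\varphi\to0$ with $\sum_k\varphi(k)=\infty$. Granting this, and using $\mu\asymp\Leb$ with $\Leb(\phi_J(\,\cdot\,))=\beta^{-m}\Leb(\,\cdot\,)$ and $\mu(J)\asymp\beta^{-m}$, we obtain a constant $c_0'>0$ with $\mu\big(E\cap J\big)\ge c_0'\,\mu(J)$ for every full cylinder $J$.

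To finish I would run a density argument. The cylinders $\mathcal F_m$ are nested, have diameter $\le\beta^{-m}\to0$, and generate the Borel $\sigma$-algebra modulo $\mu$, so by upward martingale convergence $\mathbf{1}_E(x)=\lim_m\mu\big(E\cap J_m(x)\big)/\mu\big(J_m(x)\big)$ for $\mu$-a.e.\ $x$, where $J_m(x)\in\mathcal F_m$ contains $x$. Because $(X,\mu,M_\beta)$ is ergodic and, for each $m$, the union of the full cylinders of order $m$ has $\mu$-measure bounded below by a constant independent of $m$ (a classical fact about $\beta$-expansions), the Birkhoff ergodic theorem shows that $\mu$-a.e.\ $x$ lies in a full cylinder of order $m$ for infinitely many $m$; along such a sequence $\mu\big(E\cap J_m(x)\big)/\mu\big(J_m(x)\big)\ge c_0'>0$, so the limit above cannot vanish. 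Hence $\mathbf{1}_E=1$ $\mu$-a.e., i.e.\ $\mu(E)=1$.

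The crux --- and the only place where serious work is needed --- is the uniform lower bound $c_0$, i.e.\ making the divergence half of Theorem~\ref{thm2.4} quantitative with constants independent of $\varphi$ and of the twist (among twists of bounded Lipschitz constant). The natural route is a quantitative (Erd\H{o}s--R\'enyi / Kochen--Stone) divergence Borel--Cantelli lemma for the sets $A^g_k(\varphi)$, which requires $\mu\big(A^g_k(\varphi)\big)\asymp\varphi(k)$ --- obtained from the full-cylinder structure of $M_\beta$ together with a uniform count of how many cylinders of $\mathcal F_k$ meet a given interval --- and the quasi-independence bound $\mu\big(A^g_j(\varphi)\cap A^g_k(\varphi)\big)\lesssim\mu\big(A^g_j(\varphi)\big)\mu\big(A^g_k(\varphi)\big)$ for $k-j$ large. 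The subtle point is that $A^g_k(\varphi)$ is a union of many tiny intervals whose positions are dictated by $g$; the way around it is to estimate the intersection one cylinder of $\mathcal F_k$ at a time, on which $g$ being Lipschitz only perturbs the expansion rate from $\beta^k$ to $\beta^k\pm L$ while the target of the (monotone) branch remains the fixed interval $\big(-\varphi(k),\varphi(k)\big)$ --- so no genuine ``oscillation of $g$'' error survives. One must then absorb the near-diagonal pairs $k-j=O\big(\log(1/\varphi(j))\big)$ into $\big(\sum_{k}\varphi(k)\big)^2$, which, together with checking that all constants are genuinely uniform in $\varphi$, is the delicate book-keeping making the general-$\beta$ case harder than the pseudo-Markov situation of Theorem~\ref{thm3}. (The two ``classical'' inputs used above --- uniform positivity of the measure of the full cylinders of each order, and $\mu$-a.e.\ membership in infinitely many of them --- are standard in the theory of $\beta$-expansions.)
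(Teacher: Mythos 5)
Your outline shares the paper's main structural idea---get a uniform positive-density estimate on full cylinders, then globalize---but you arrange the pieces differently, and one of your two new pieces has a real gap.

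On the positive side, the renormalization $\phi_J^{-1}(E\cap J)=R_{M_\beta}^{g_J}(\psi^{(m)})$ for a full $J\in\mathcal F_m$, with $g_J=f\circ\phi_J$ $L\beta^{-m}$-Lipschitz and $\psi^{(m)}$ a tail of $\psi$, is a clean way to see why the needed uniform constant $c_0$ should exist: it reduces the local estimate to the \emph{global} positive-measure theorem, uniformly over a Lipschitz-bounded class of twists and over all divergent $\varphi\to0$. The paper does not renormalize; instead it proves a \emph{local} version of the measure estimate (Lemma~\ref{local4}) and applies the cylinder-by-cylinder quasi-independence bound (Lemma~\ref{localqilemma}) summed over sub-cylinders of a full cylinder, feeding both into the quantitative Chung--Erd\H{o}s lemma (Lemma~\ref{posmeas}). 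The outcome is the same: $\mu(I\cap E)\ge\gamma\,\mu(I)$ for all full cylinders $I$ of sufficiently high order, with $\gamma$ depending only on the structural constants and the Lipschitz constant of $f$. You are right that this is the crux, and your route and the paper's both require the same underlying estimates; yours just repackages them.

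Where you diverge more seriously --- and where the gap is --- is the globalization step. You invoke martingale convergence along cylinders and then claim that, because $\mu(F_m)\ge c>0$ where $F_m$ is the union of full $m$-cylinders, ``the Birkhoff ergodic theorem shows that $\mu$-a.e.\ $x$ lies in a full cylinder of order $m$ for infinitely many $m$.'' This does not follow from Birkhoff: $J_m(x)$ being full is not an orbit-visiting condition, so Birkhoff is not directly applicable. One can rescue the claim --- the limsup set $S=\{x:J_m(x)\text{ full i.o.}\}$ is easily seen to satisfy $S\subset T^{-1}S$ (fullness is stable under deleting a prefix), so by $T$-invariance of $\mu$ it is essentially invariant, and ergodicity plus $\mu(S)\ge\limsup_m\mu(F_m)\ge c>0$ would give $\mu(S)=1$. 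But you also need the input $\mu(F_m)\ge c$ uniformly in $m$, which itself requires an argument (it is not immediate from R\'enyi's or Parry's theorems). The paper sidesteps all of this. Its key structural lemma is Proposition~\ref{fullprop}: \emph{every} cylinder $J\in\mathcal F_n$ contains a full cylinder $I$ with $\Leb(I)\ge\Leb(J)/\beta$, proved from the Parry admissibility criterion (Lemma~\ref{lemmap}) and Fan--Wang's observation on reducing the last digit (Lemma~\ref{lemmafw}). With that, $\mu(J\cap E)\ge\mu(I\cap E)\ge\gamma\mu(I)\gtrsim\mu(J)$ for all cylinders, hence for all small balls (cover by cylinders), and the Lebesgue Density Theorem (Theorem~\ref{ldt}) finishes. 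This route requires no ``infinitely often'' statement at all. You should either prove your infinitely-often claim along the lines above, or --- better --- replace it with the Proposition~\ref{fullprop} argument, which is both shorter and avoids the uniform lower bound on $\mu(F_m)$.

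One more minor point: your claim $\delta=1$ needs a sentence of justification (R\'enyi's formula for the invariant density shows $\mu\asymp\Leb$ on $[0,1]$, which gives Ahlfors regularity of dimension $1$); the paper records this as Lemma~\ref{lemmar} and uses it explicitly.
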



\ignore{\comm{Here we need to carefully compare our setting with that of \cite{hussain}. Part of this discussion is below.}

\begin{remark}
In \cite{hussain}, they set each $X_i$ to be an open subset of $\mathbb{R}^n$ and $T|_{X_i}$ to be $C^1$. They assume that there exists $K>0$ such that 
\begin{equation}
	K^{-1}\leq \frac{d(T^mx,T^my)}{d(x,y)||D_x(T^m)||}\leq K \label{eq11}
\end{equation}   
for any $m\in \mathbb{N}$ and $x,y\in J_m\in \mathcal{F}_m$, where $||D_x(T^n)||=\sup_{v\in X}\frac{||D_xT(v)||_2}{||v||_2}$. This is equivalent to our assumption of bounded distortion under their settings. Given (\ref{eq10}), since $||D_x(T^n)||=\sup_{z}\frac{d(T^mx,T^mz)}{d(x,z)}$, (\ref{eq11}) holds. Conversely given (\ref{eq11}), 
\begin{align*}
K^{-2}\leq  \frac{d(T^mx,T^my)/d(x,y)}{d(T^mx,T^mz)/d(x,z)}=\frac{d(T^mx,T^my)}{d(x,y)||D_x(T^m)||}\cdot \frac{d(x,z)||D_x(T^m)||}{d(T^mx,T^mz)}\leq K^2
\end{align*}
so (\ref{eq10}) holds. \comm{Conclusion?} Hence our definitions of $K_J$, bounded distortion and expanding properties match with theirs in the settings where $X_i$ are open subsets of $\mathbb{R}^n$ and $T|_{X_i}$ are $C^1$. Hence in \S \ref{example}, we refer readers to \cite{hussain} for checking that the first two systems satisfy the assumptions.
\end{remark}}

In \S \ref{beta} we show how our methods can be modified to yield an
an independent proof of the above theorem.

\smallskip

We also remark that the paper \cite{DFL} suggests an even more general set-up: there the authors consider a uniformly
Lipschitz function $\Phi:X\times X\to\R$  and under certain assumptions recover zero--one laws for  sets of the form
\begin{equation*}\label{dfl}
\left\{x\in X\left|\begin{aligned}\phi_1(n)\le \Phi (x, T^n x)\le \phi_2(n)   \\ \text{ for infinitely many } n\in \mathbb{N}\ \ \end{aligned}\right.\right\}.
\end{equation*}
Our set-up corresponds to $\phi_1 = 0$, $\phi_2 = \psi$ and $\Phi(x,y) = d\big(f(x),y\big)$. 
It would be interesting to see if the methods of our paper can be applied to the generalized setting of \cite{DFL}.
\smallskip

{The structure of the paper is as follows. In \S\ref{more} we discuss several basic properties of $f$-twisted recurrence sets and some examples of such sets. In \S3 we prove the convergence part of Theorem \ref{thm2.4}. In \S4 we study quasi-independence  properties of the  sequence of measurable sets whose limsup set is 
given by \eqref{twisted}. 
In \S\S5--6  we consider the divergence case and  complete the proof of Theorems \ref{thm2.4}, \ref{Thm2} and \ref{thm3}. In \S7 we discuss examples of dynamical systems to which our theorems apply. 
The final section contains a separate discussion of $\beta$-transformations and results in proving Theorem \ref{thmbeta}.} \ignore{\comm{Change after everything else is finalized.}}

\ignore{With this terminology, Theorem \ref{prt} says that almost every point $x\in X$ is $f$-recurrent when $f$ is the identity map, and 
Theorem \ref{density} asserts that when $T$ is ergodic, $\supp\mu = X$ and  $f \equiv \const$, almost every point $x\in X$ is $f$-recurrent. }

\subsection*{Acknowledgements}
The authors are grateful to Dmitry Dolgopyat, Bassam Fayad, Mumtaz Hussain, Osama Khalil, Bao-Wei Wang and {two anonymous referees} for helpful discussions.

\section{More about $f$-twisted recurrence}\label{more}

We start with several elementary observations concerning sets of $f$-twisted recurrence.

\begin{lemma}\label{piecewise}
	Let   $\psi:\mathbb{N}\to \mathbb{R}^+$ be an arbitrary function, and let $f:X\to X$ be such that  there exist at most countably many 
measurable subsets $Y_i$ of $X$ and  functions $f_i:X\to X$, $i\in\mathcal{I}$, such that $\mu(X\ssm \cup_iY_i) = 0$, 
\begin{equation}\label{pieces}
f|_{Y_i} = f_i\text{ and }\mu\big(R_T^{f_i}(\psi)\big) = 1 \text{ for each }i\in\mathcal{I}.
\end{equation} Then $ \mu\big(R_T^{f}(\psi)\big)  = 1$. 
\end{lemma}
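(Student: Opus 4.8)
The plan is to use that $f$-twisted recurrence is a pointwise condition which, evaluated at a point $x$, depends on $f$ only through the single value $f(x)$. First I would record the elementary identity
\[
R_T^f(\psi)\cap Y_i = R_T^{f_i}(\psi)\cap Y_i\qquad\text{for every }i\in\mathcal{I}.
\]
Indeed, if $x\in Y_i$ then $f(x)=f_i(x)$ by \eqref{pieces}, so for every $n$ the inequality $d\big(T^nx,f(x)\big)<\psi(n)$ appearing in \eqref{twisted} is literally the same as $d\big(T^nx,f_i(x)\big)<\psi(n)$; hence $x$ satisfies it for infinitely many $n$ precisely when $x\in R_T^{f_i}(\psi)$.

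Next I would push forward the full-measure hypothesis. From $\mu\big(R_T^{f_i}(\psi)\big)=1$ we get $\mu\big(Y_i\ssm R_T^{f_i}(\psi)\big)=0$, and therefore, by the identity above, $\mu\big(Y_i\ssm R_T^{f}(\psi)\big)=0$ for each $i$. Since $\mathcal{I}$ is at most countable, $\big(\bigcup_i Y_i\big)\ssm R_T^f(\psi)=\bigcup_i\big(Y_i\ssm R_T^f(\psi)\big)$ is still $\mu$-null. Combined with the assumption $\mu\big(X\ssm\bigcup_i Y_i\big)=0$, this shows that
\[
X\ssm R_T^f(\psi)\subseteq \Big(X\ssm\bigcup_i Y_i\Big)\cup\bigcup_i\big(Y_i\ssm R_T^f(\psi)\big)
\]
is $\mu$-null, i.e.\ $\mu\big(R_T^f(\psi)\big)=1$.

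There is no substantial obstacle here; the only points worth a remark are that the countability of $\mathcal{I}$ is exactly what keeps the union of null sets null, and that measurability of $R_T^f(\psi)$ causes no trouble since it contains the measurable conull set $\bigcup_i\big(R_T^{f_i}(\psi)\cap Y_i\big)$ (if one prefers, one may first discard the null set $X\ssm\bigcup_i Y_i$ and replace the $Y_i$ by a disjointification, which changes nothing in the argument).
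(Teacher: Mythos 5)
Your proof is correct and takes essentially the same approach as the paper: both rest on the observation that $R_T^f(\psi)\cap Y_i = R_T^{f_i}(\psi)\cap Y_i$ (since the defining condition depends on $f$ only through $f(x)$), from which $\mu\big(R_T^f(\psi)\cap Y_i\big)=\mu(Y_i)$, and one concludes by summing over the countably many $i$. Your write-up merely spells out the null-set bookkeeping and the measurability remark that the paper leaves implicit.
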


\begin{proof} Indeed, it follows from \eqref{twisted} and \eqref{pieces} that  $$\mu\big(R_T^{f}(\psi) \cap Y_i \big) = \mu\big(R_T^{f_i}(\psi) \cap Y_i \big)  = \mu(Y_i)$$ for each $i\in\mathcal{I}$. \end{proof}

Let us say that a function is \textit{simple} if it   takes at most countably many values.

\begin{corollary}\label{ergsimple}
	Suppose $T$ is ergodic and $\supp\mu$ is dense in $X$. Then $ \mu(R_T^{f})  = 1$ for any simple function $f:X\to X$. 
\end{corollary}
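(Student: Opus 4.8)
The plan is to reduce Corollary \ref{ergsimple} to the constant-function case via Lemma \ref{piecewise}. Since $f$ is simple, it takes at most countably many values $\{c_i\}_{i\in\mathcal I}$, so setting $Y_i := f^{-1}(c_i)$ gives a countable measurable partition of $X$ (up to a null set — in fact with no null error here) with $f|_{Y_i} = f_i$ where $f_i \equiv c_i$ is constant. By Lemma \ref{piecewise}, it suffices to show that $\mu\big(R_T^{c}(1_X)\big) = \mu(R_T^c) = 1$ for each constant value $c = c_i$; note that $R_T^{f_i}(\psi) \supseteq R_T^{f_i}$ when $\psi = \vre 1_X$ is not the issue here — rather we apply Lemma \ref{piecewise} with $\psi$ replaced appropriately, or more cleanly observe $R_T^f = \bigcap_{\vre>0} R_T^f(\vre 1_X)$ and run the argument for each $\vre$.

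First I would handle the constant case $f \equiv y$: we must show $\mu(R_T^y) = 1$ whenever $y \in X$. This is exactly statement \eqref{erg} in the excerpt, valid for $y \in \supp\mu$ when $T$ is ergodic. However, $y = c_i$ need not lie in $\supp\mu$; it is merely a value of $f$ at some point, and could be anywhere in $X$. Here is where the hypothesis that $\supp\mu$ is \emph{dense} in $X$ enters: I would argue that for an ergodic $T$ one has $\mu(R_T^y) = 1$ for \emph{every} $y\in X$ once $\supp\mu$ is dense. Indeed, for $\mu$-a.e.\ $x$ the orbit $\{T^n x\}$ equidistributes (or at least is dense) in $\supp\mu$ by the pointwise ergodic theorem applied to the indicator of balls with a countable basis; since $\supp\mu$ is dense in $X$, such an orbit is dense in $X$ as well, hence accumulates at $y$, giving $\liminf_n d(T^n x, y) = 0$. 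Thus $x \in R_T^y$, proving $\mu(R_T^y) = 1$.

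Putting the pieces together: for each fixed $\vre > 0$, apply Lemma \ref{piecewise} with $\psi = \vre 1_X$, $Y_i = f^{-1}(c_i)$, and $f_i \equiv c_i$; the hypothesis $\mu\big(R_T^{c_i}(\vre 1_X)\big) \ge \mu(R_T^{c_i}) = 1$ is met by the previous paragraph, so $\mu\big(R_T^f(\vre 1_X)\big) = 1$. Since $R_T^f = \bigcap_{\vre > 0} R_T^f(\vre 1_X)$ and the intersection over $\vre = 1/k$, $k\in\N$, of conull sets is conull, we conclude $\mu(R_T^f) = 1$.

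The only genuine obstacle is the density step — verifying that $\mu$-a.e.\ orbit is dense in $X$ (not just in $\supp\mu$) and hence hits every prescribed target $y\in X$ infinitely closely. This requires invoking the ergodic theorem with a countable basis of balls centered at points of a countable dense subset of $\supp\mu$ to get full-measure orbit density in $\supp\mu$, then using density of $\supp\mu$ in $X$ to upgrade to density in $X$; separability of $X$ is what makes the countable-basis argument go through. Everything else is bookkeeping with Lemma \ref{piecewise}.
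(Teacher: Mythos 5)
Your proposal is correct and follows the same strategy as the paper: decompose a simple $f$ into constants $c_i$ on $Y_i = f^{-1}(c_i)$, reduce to the constant-target case via Lemma~\ref{piecewise}, and intersect over $\vre$. The one place where you did more work than necessary is the ``density step'': you re-derive $\mu(R_T^y)=1$ for arbitrary $y\in X$ by arguing that $\mu$-a.e.\ orbit is dense in $\supp\mu$ (countable basis of balls centered at a countable dense subset plus ergodicity), and then upgrade to density in $X$. That argument is sound, but there is a shortcut the paper's one-line proof is implicitly using: $\supp\mu$ is always a closed set, so ``$\supp\mu$ dense in $X$'' already forces $\supp\mu = X$. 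Hence every value $c_i$ of $f$ automatically lies in $\supp\mu$, and \eqref{erg} applies directly without re-proving anything about orbit density. The worry you raise --- ``$c_i$ need not lie in $\supp\mu$'' --- therefore never materializes under the stated hypothesis. Your version is a valid and self-contained alternative (and in effect reproves \eqref{erg} rather than quoting it), but it buys nothing beyond what the closedness observation gives for free.
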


\begin{proof} Immediate from Lemma \ref{piecewise} and \eqref{erg}. \end{proof}

\ignore{\begin{proof}
	Let $f$ be a simple function. Then $f=\sum_{n\in \mathcal{N}}f_n\cdot \chi_{Y_n}$ for a countable collection indexed by $\mathcal{N}$ of constants $f_n$ and disjoint measurable sets $\{Y_n\}_{n\in \mathcal{N}}$ so that $\mu(\bigcup_{n\in \mathcal{N}}Y_n)=1$. Note that 
	\begin{equation*}
		\mu(R_T^{f_n}\cap Y_n)=\mu(Y_n)
	\end{equation*} where $R_T^{f_n}$ is the set of all $f_n$-recurrent points. 
	Hence 
	\begin{equation*}
		\mu(R_T^{f})=\sum_{n\in \mathcal{N}}\mu(R_T^{f_n}\cap Y_n)=\sum_{n\in \mathcal{N}}\mu(Y_n)=1
	\end{equation*}
\end{proof}}

\begin{lemma}\label{approx} Let $(f_n)_{n\in\N}$ be a sequence of functions  $X\to X$  such that  $\mu(R_T^{f_n}) = 1$ for each $n$. Suppose that $f_n\to f$ uniformly on a set of full measure. 
Then $ \mu(R_T^{f})  = 1$. 
\end{lemma}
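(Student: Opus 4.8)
The plan is to use the characterization $R_T^f = \bigcap_{\vre > 0} R_T^f(\vre 1_X)$ recorded in \S\ref{more}: a point $x$ lies in $R_T^f$ exactly when $\liminf_n d\big(T^n x,f(x)\big)=0$, i.e. when for every $\vre>0$ the inequality $d\big(T^n x,f(x)\big)<\vre$ holds for infinitely many $n$. The whole argument is then a triangle inequality, once one arranges the relevant full-measure sets correctly.

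First I would let $E\subset X$ be the set of full measure on which $f_n\to f$ uniformly, and set $E':=\bigcap_{n\in\N}R_T^{f_n}$, which is conull as a countable intersection of conull sets by hypothesis. It suffices to prove $E\cap E'\subset R_T^f$. Fix $x\in E\cap E'$ and $\vre>0$. By uniform convergence on $E$ choose $N$ with $\sup_{y\in E}d\big(f_N(y),f(y)\big)<\vre/2$; in particular $d\big(f_N(x),f(x)\big)<\vre/2$. Since $x\in R_T^{f_N}$ we have $\liminf_n d\big(T^n x,f_N(x)\big)=0$, so there are infinitely many $n$ with $d\big(T^n x,f_N(x)\big)<\vre/2$, and for each such $n$,
\[
d\big(T^n x,f(x)\big)\le d\big(T^n x,f_N(x)\big)+d\big(f_N(x),f(x)\big)<\vre .
\]
As $\vre>0$ was arbitrary, $\liminf_n d\big(T^n x,f(x)\big)=0$, i.e. $x\in R_T^f$; hence $\mu(R_T^f)\ge\mu(E\cap E')=1$.

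There is no substantive obstacle here. The only point demanding a little care is that uniform convergence is assumed only on a set of full measure rather than on all of $X$, which is precisely why one works inside $E$ and takes the supremum over $E$ only; combining this with the conull set $E'$ on which all the $f_n$-twisted recurrence conclusions hold simultaneously is what makes the elementary $\vre/2$ estimate go through almost everywhere.
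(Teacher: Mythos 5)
Your proof is correct and follows essentially the same route as the paper's: pass to the conull set where the $f_n$-recurrence holds for all $n$ simultaneously, pick $N$ by uniform convergence, and conclude by the triangle inequality. Your version is if anything slightly more careful in explicitly fixing a single conull set $E\cap E'$ before quantifying over $\vre$, which sidesteps any worry about an uncountable intersection of full-measure sets.
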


\begin{proof} Since $\bigcap_nR_T^{f_n}$ has full measure,  for almost every $x\in X$ and each $n\in\N$ one has 
\begin{equation*}
	\liminf_{k\to\infty}d\big(T^kx,f_n(x)\big)=0.
\end{equation*}
	 Fix $\varepsilon > 0$; then there exists $N$ so that for all $n>N$, $d\big(f_n(x),f(x)\big)<\frac{\varepsilon}{2}$ for almost every $x\in X$; on the other hand, for almost every $x\in X$ such that $d\big(f_n(x),f(x)\big)<\frac{\varepsilon}{2}$, $d\big(T^k x,f_n(x)\big)<\frac{\varepsilon}{2}$ for infinitely many $k$. This implies $d\big(T^kx,f(x)\big)<\varepsilon$ for infinitely many $k$.  Since $\varepsilon$ is chosen arbitrarily, we have
	$\liminf_{k\to\infty}d\big(T^kx,f(x)\big)=0$.
\end{proof}

\begin{corollary}\label{cor1.4}
	Suppose that 
	$T$ is ergodic   and $\supp\mu$ is dense in $X$. Then   $ \mu(R_T^{f})  = 1$ for any Borel-measurable $f:X\to X$.
\end{corollary}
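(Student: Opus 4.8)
The plan is to reduce the general Borel case to the case of simple functions already settled in Corollary~\ref{ergsimple}, using the uniform-approximation principle of Lemma~\ref{approx}. Concretely, given a Borel-measurable $f:X\to X$, I would produce a sequence of simple functions $f_n:X\to X$ with $d\big(f_n(x),f(x)\big)<1/n$ for every $x\in X$, so that $f_n\to f$ uniformly on all of $X$ (in particular on a set of full measure).

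To construct $f_n$, I would exploit compactness of $X$: for each $n$ pick a finite cover of $X$ by open balls $B\big(y^{(n)}_1,1/n\big),\dots,B\big(y^{(n)}_{k_n},1/n\big)$ with centers $y^{(n)}_j\in X$, and disjointify it into Borel sets, say $E^{(n)}_j:=B\big(y^{(n)}_j,1/n\big)\ssm\bigcup_{i<j}B\big(y^{(n)}_i,1/n\big)$, which partition $X$. Then set $f_n(x):=y^{(n)}_j$ for the unique $j$ with $f(x)\in E^{(n)}_j$. This $f_n$ takes finitely many values, hence is simple, and its level sets are the preimages $f^{-1}\big(E^{(n)}_j\big)$, which are Borel because $f$ is Borel-measurable; so the hypotheses of Corollary~\ref{ergsimple} (via Lemma~\ref{piecewise}, which needs measurable pieces) are met. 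Moreover, for each $x$ the point $f(x)$ lies in some $B\big(y^{(n)}_j,1/n\big)$, whence $d\big(f_n(x),f(x)\big)=d\big(y^{(n)}_j,f(x)\big)<1/n$, giving uniform convergence $f_n\to f$ on $X$.

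Having this, I would conclude as follows: since $T$ is ergodic and $\supp\mu$ is dense in $X$, Corollary~\ref{ergsimple} applies to each simple function $f_n$ and gives $\mu\big(R_T^{f_n}\big)=1$ for all $n$; then Lemma~\ref{approx}, applied to the sequence $(f_n)$ converging uniformly to $f$, yields $\mu\big(R_T^{f}\big)=1$. I do not expect a real obstacle here: the only delicate point is arranging that the approximants be \emph{simple with measurable level sets}, so that Corollary~\ref{ergsimple} is genuinely applicable, and this is automatic because we pull back the Borel pieces of a finite cover under the Borel map $f$. (If one wanted to drop compactness and use only separability of $X$, the same argument works verbatim with a countable cover, producing countably-valued simple functions.)
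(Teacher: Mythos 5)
Your proof is correct and matches the paper's approach: both construct simple approximants $f_n$ by covering $X$ with small balls, assigning to each $x$ a ball-center near $f(x)$, and then invoking Corollary~\ref{ergsimple} together with Lemma~\ref{approx}. The only cosmetic difference is that you use compactness to get a finite cover and disjointify explicitly, whereas the paper uses a countable dense set and the ``least index'' trick; these are interchangeable here, and you correctly note that measurability of the level sets (needed for Lemma~\ref{piecewise}) comes for free since they are Borel preimages under the Borel map $f$.
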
  

\begin{proof}  Let $\{x_n\}_{n=1}^\infty$ be a dense subset of $X$.	Let $\varepsilon>0$ and $f:X\to X$ be a Borel-measurable function. Then $\{B(x_n,\varepsilon)\}_{n=1}^\infty$ covers $X$. Define 
 \begin{equation}
 		g_\varepsilon(x)=x_n \text{ where }n=\inf_{m}\{m:f(x)\in B(x_m,\varepsilon)\}
 \end{equation}
 Then $g_\varepsilon$ is simple and $||g_\varepsilon-f||_\infty\leq \varepsilon$. Since $\varepsilon$ is chosen arbitrarily, $f$ is a uniform limit of simple functions. By Corollary \ref{ergsimple} and Lemma \ref{approx}, $x$ is $f$-recurrent for almost every $x\in X$.
\end{proof}

Next, let us observe that the properties of sets $R_T^f(\psi)$ could be strikingly different from the conclusion of Theorem \ref{thm2.4} if the assumptions of that theorem are not imposed. Let us start with the simplest possible non-trivial\footnote{For us ergodic self-maps $T$ of finite sets $X$ will be trivial: indeed, since those are transitive, it easily follows that  $R_T^f(\psi) = X$ for any $f$ and any positive $\psi$.} example of an ergodic dynamical system: an irrational circle rotation \linebreak $X = \R/\Z$, $\mu = $ Lebesgue measure, $T_\alpha(x) = x + \alpha \bmod \Z$ where $\alpha\in\R\ssm \Q$. Then the condition defining the recurrence set $$R_{T_\alpha}(\psi) = \{x : |n\alpha - m| < \psi(n)\text{ for infinitely many }n\in\N\text{ and some }m\in\Z\}$$ is independent of $x$; hence $R_{T_\alpha}(\psi)$ is either $X$ or $\varnothing$, and this dichotomy is different for different $\alpha$. More precisely, Dirichlet's Theorem implies that $R_{T_\alpha}(\psi_1) = X$ for any $\alpha$ (see \equ{psi1} for this notation), and the same is true for $\psi_1$ replaced with $\frac1{\sqrt{5}}\psi_1$, but not with  $c\psi_1$ for $c < \frac1{\sqrt{5}}$. In particular,   $\alpha$ is badly approximable if and only if $R_{T_\alpha}(c\psi_1) = \varnothing$ for some $c > 0$. On the other hand, the theory of continued fractions shows that for any positive non-increasing $\psi$ (decaying arbitrarily fast) there exists $\alpha$ such that  $R_{T_\alpha}(\psi)$ contains $0$ (and hence coincides with $\R/\Z$).

Likewise, studying targets shrinking to $y\in X$ for the above system reduces to inhomogeneous Diophantine approximation:
$$R^y_{T_\alpha}(\psi) = \{x : \dist(n\alpha,y-x) < \psi(n)\text{ for infinitely many }n\in\N\}$$ According to Minkowski's theorem  \cite[Chapter III, Theorem II]{C}, for any irrational $\alpha$ and any $y\in\R/\Z$, the complement of 
$R^y_{T_\alpha}(\frac14\psi_1)$ is at most countable. A  precise zero-one law for sets $R^y_{T_\alpha}( \psi)$ again depends on the Diophantine properties of $\alpha$. For example, 
it is a theorem of Kurzweil \cite{Ku} that   $\alpha$ is badly approximable if and only if the following statement holds: for any non-increasing $\psi$, the set $R^y_{T_\alpha}( \psi)$ is null/conull if $\sum_{k=1}^\infty \psi(k)$ converges/diverges. However, well approximable $\alpha$ come with their own convergence/divergence condition on $\psi$ guaranteeing  that $R^y_{T_\alpha}( \psi)$ is null or conull; see  \cite{FK} for the most general statement.

Clearly the set-up of $f$-twisted recurrence can be similarly and straightforwardly   restated in a Diophantine approximation language:  $$R^f_{T_\alpha}(\psi) = \left\{x\in X: \dist\big(n\alpha,f(x)-x\big) < \psi(n)\text{ for infinitely many }n\in\N\right\}.$$
Thus if $f(x) = x + \beta\bmod \Z$ for a fixed $\beta$, then $R_{T_\alpha}(\psi)$ is either $X$ or $\varnothing$; alternatively, if the pushforward of Lebesgue measure by the map $x\mapsto f(x) - x$ is absolutely continuous with respect to Lebesgue, then the zero/one law for the sets $R^f_{T_\alpha}(\psi)$ depends on the Diophantine properties of $\alpha$ as described in \cite{FK}.

The situation is even trickier if one considers irrational rotations of higher-dimensional tori. Namely, if we let $X = \R^d/\Z^d$ and $\mu = $ Lebesgue measure, then it is shown in \cite{GP} that for any (arbitrarily slowly decaying) non-increasing function $\psi$ with $\lim_{t\to \infty}\psi(t) = 0$ there exists an ergodic translation $T_\alpha:x\mapsto x + \alpha \bmod \Z^d$ such that $\mu\big(R^y_{T_\alpha}(\psi)\big) = 0$ for any $y\in X$. Moreover, by suitably reparametrizing the aforementioned example one can construct a smooth  mixing transformation on the three dimensional torus  with the same property. Thus some conditions on the speed of mixing is crucial for a zero-one law as in Theorem \ref{thm2.4}.


\section{{The convergence part}} \label{quasiind}
In the next two sections we prove Theorem \ref{thm2.4}, thereby  assuming that $(X,\mu,T)$ satisfies conditions \eqref{ar}--\eqref{conf} and fixing  $\psi:\mathbb{N}\to \mathbb{R}^+$ with $\lim_{n\to \infty}\psi(n)=0$.  
Similarly to \eqref{recsets} and \eqref{shrsets}, for an arbitrary $f:X\to X$  let us define
\begin{equation}\label{twsets}
	A_n = A_T^f(n,\psi):=\left\{x\in X:d\big(T^n x,f(x)\big)<\psi(n)\right\}.
\end{equation}
Clearly $R_T^f(\psi) = \limsup A_n$.


\smallskip
Unlike the shrinking target case, corresponding to constant functions $f$, the sets $A_n$ cannot be expressed in the form   $T^{-n}B_n$ for some balls $B_n$. Our strategy is to consider the intersection of $A_n$ with $f^{-1}B(x_0,r)$, where $x_0\in X$ and $r > 0$, and 
approximate this intersection by the preimages  of some balls under $T$. \par 


\begin{lemma}\label{lemma1}
	For any $x_0\in X$, any $r>0$ and any subset $E$ of $f^{-1}B(x_0,r)$, 
	\begin{equation}\label{eq1}
		E\cap A_n \subset E\cap T^{-n}B\big(x_0,\psi(n)+r\big).
	\end{equation}
	Furthermore, if $r <  \psi(n)$, then 
	\begin{equation}\label{lem1-2}
	E\cap T^{-n}B\big(x_0,\psi(n)-r\big)\subset 	 E\cap A_n.
	\end{equation}
\end{lemma}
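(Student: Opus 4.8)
The plan is to unwind the definitions of the sets involved and use the triangle inequality. Recall from \eqref{twsets} that $x \in A_n$ means $d\big(T^n x, f(x)\big) < \psi(n)$, and that $E \subset f^{-1}B(x_0,r)$ means every $x \in E$ satisfies $d\big(f(x), x_0\big) < r$.

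For the first inclusion \eqref{eq1}, take $x \in E \cap A_n$. I would estimate
\[
d\big(T^n x, x_0\big) \le d\big(T^n x, f(x)\big) + d\big(f(x), x_0\big) < \psi(n) + r,
\]
using $x \in A_n$ for the first term and $x \in E \subset f^{-1}B(x_0,r)$ for the second. Hence $T^n x \in B\big(x_0, \psi(n)+r\big)$, i.e. $x \in T^{-n}B\big(x_0,\psi(n)+r\big)$; combined with $x \in E$ this gives the claimed containment.

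For the second inclusion \eqref{lem1-2}, assume $r < \psi(n)$ and take $x \in E \cap T^{-n}B\big(x_0,\psi(n)-r\big)$, so that $d\big(T^n x, x_0\big) < \psi(n) - r$. Again by the triangle inequality,
\[
d\big(T^n x, f(x)\big) \le d\big(T^n x, x_0\big) + d\big(x_0, f(x)\big) < \big(\psi(n)-r\big) + r = \psi(n),
\]
where the bound $d\big(x_0, f(x)\big) < r$ comes from $x \in E \subset f^{-1}B(x_0,r)$. Thus $x \in A_n$, and together with $x \in E$ this yields the desired inclusion.

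There is no real obstacle here; the statement is a direct two-line application of the triangle inequality, and the only point worth noting is that the condition $r < \psi(n)$ in the second part is exactly what is needed to keep $\psi(n) - r$ positive so that $B\big(x_0, \psi(n)-r\big)$ is a genuine (nonempty) ball and the estimate closes. I would simply present both computations as above.
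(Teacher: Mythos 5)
Your proof is correct and follows exactly the same route as the paper's: both directions are immediate applications of the triangle inequality, using $x\in E\subset f^{-1}B(x_0,r)$ to bound $d\big(f(x),x_0\big)$ and the membership in $A_n$ (or in $T^{-n}B\big(x_0,\psi(n)-r\big)$) to bound the other term. Nothing to add.
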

\begin{proof}
	Fix a point $x\in E\cap A_n$. Then $$d\big(f(x),x_0\big)<r\text{ and }d\big(T^nx,f(x)\big)<\psi(n),$$ which implies that
	\begin{equation*}
		d(T^nx,x_0) < d\big(T^nx,f(x)\big)+d\big(f(x),x_0\big)<\psi(n)+r.
	\end{equation*}
	Hence $E\cap A_n\subset  E\cap T^{-n}B\big(x_0,\psi(n)+r\big)$. \par 
	On the other hand, fix $x\in E\cap T^{-n}B\big(x_0,\psi(n)-r\big)$. Then $d(f(x),x_0)<r$ and $d(T^nx,x_0)<\psi(n)-r$. Hence 
	\begin{equation*}
		d\big(T^nx,f(x)\big)\leq d(T^nx,x_0)+d\big(x_0,f(x)\big)<\psi(n),
	\end{equation*}
	thus $E\cap T^{-n}B\big(x_0,\psi(n)-r\big)\subset E\cap A_n$. 
\end{proof}




Choose $n_0\in \mathbb{N}$ such that  $5\psi(n) < r_0$ for all $n > n_0$, where $r_0$ is as in \eqref{ar};  the next several statements in this section will be proved for $n > n_0$.

\ignore{\begin{lemma}\label{lemma3}
	Take $0 < \varepsilon \leq \frac{1}{2}$, and let $B=B\big(x_0,\varepsilon \psi(n)\big)$ for some $x_0\in X$ and $n>n_0$. 
	Then for any open ball $E$ contained in $f^{-1}B$, 
	\begin{align*}
		 C_1\mu(E)\psi(n)^\delta-C_2a_n \psi(n)^\delta \leq \mu(E\cap A_n)\leq C_3\big(\mu(E)+a_n\big) \psi(n)^\delta,
	\end{align*}
	where \begin{equation*}C_1=C_1(\varepsilon)=\eta_1(1-\varepsilon)^\delta, \ C_2=C_2(\varepsilon)=\eta_2(1-\varepsilon)^\delta,\ C_3 =C_3(\varepsilon)=\eta_2(1+\varepsilon)^\delta,
	\end{equation*}  with $\eta_1,\eta_2$  as in \eqref{ar} and $(a_n)_{n\in\N}$ as in \eqref{em}. 
\end{lemma}

\begin{proof}
	Let $r = \varepsilon\psi(n)$, and let $E$ be an open ball contained in $f^{-1}B(x_0,r)$. 
Combining (\ref{eq1})   with \eqref{em},
we get
\begin{align*}
		\mu(E\cap A_n)&\geq \mu\Big(E\cap T^{-n}B\big(x_0,\psi(n) - r\big)\Big)\\
		&\geq \mu(E)\mu\big(T^{-n}B(x_0,\psi(n) - r)\big)
		-a_n\mu\big(E\cap T^{-n}B(x_0,\psi(n) - r)\big)\\
		&= \mu(E)\mu\Big(T^{-n}B\big(x_0,(1-\varepsilon)\psi(n)\big)\Big)
		-a_n\mu\Big(  T^{-n}B\big(x_0,(1-\varepsilon)\psi(n)\big)\Big)\\
		&= \mu(E)\mu\Big(B\big(x_0,(1-\varepsilon)\psi(n)\big)\Big)
		-a_n\mu\Big(  B\big(x_0,(1-\varepsilon)\psi(n)\big)\Big)\\
		&\underset{\eqref{ar}}\geq \mu(E)\eta_1(1-\varepsilon)^\delta\psi(n)^\delta-\eta_2a_n(1-\varepsilon)^\delta\psi(n)^\delta
			\end{align*}
			and
\begin{align*}
		\mu(E\cap A_n)&\leq \mu\Big(E\cap T^{-n}B\big(x_0,\psi(n) + r\big)\Big)\\
		&\leq \mu(E)\mu\big(T^{-n}B(x_0,\psi(n) + r)\big)
		+a_n\mu\big(E\cap T^{-n}B(x_0,\psi(n) + r)\big)\\
		&= \mu(E)\mu\Big(T^{-n}B\big(x_0,(1+\varepsilon)\psi(n)\big)\Big)
		+a_n\mu\Big(  T^{-n}B\big(x_0,(1+\varepsilon)\psi(n)\big)\Big)\\
		&\leq \mu(E)\mu\Big(B\big(x_0,(1+\varepsilon)\psi(n)\big)\Big)
		+a_n\mu\Big( B\big(x_0,(1+\varepsilon)\psi(n)\big)\Big)\\
		&\underset{\eqref{ar}}\leq \mu(E)\eta_2(1+\varepsilon)^\delta\psi(n)^\delta+\eta_2a_n(1+\varepsilon)^\delta\psi(n)^\delta,
			\end{align*}
establishing the claim. \end{proof}}

{\begin{lemma}\label{lemma3}
	Let 
	$B=B\big(x_0,  \psi(n)/2\big)$ for some $x_0\in X$ and $n>n_0$. 
	Then for any open ball $E$ contained in $f^{-1}B$, 
	\begin{align*}
		2^{-\delta} \big(\eta_1\mu(E)-\eta_2a_n\big) \psi(n)^\delta \leq \mu(E\cap A_n)\leq \eta_2(3/2)^\delta\big(\mu(E)+a_n\big)  \psi(n)^\delta,
	\end{align*}
	 with $\delta,\eta_1,\eta_2$  as in \eqref{ar} and $(a_n)_{n\in\N}$ as in \eqref{em}. 
\end{lemma}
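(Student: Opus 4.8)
The plan is to specialize the two-sided set inclusion of Lemma \ref{lemma1} to the radius $r = \psi(n)/2$ and then feed the resulting preimages of balls into the uniform mixing bound \eqref{em} and the Ahlfors regularity estimate \eqref{ar}. Since $E \subset f^{-1}B = f^{-1}B(x_0,\psi(n)/2)$ and $r = \psi(n)/2 < \psi(n)$, both \eqref{eq1} and \eqref{lem1-2} apply, giving $E \cap T^{-n}B(x_0,\psi(n)/2) \subset E \cap A_n \subset E \cap T^{-n}B(x_0,3\psi(n)/2)$. The choice of $n_0$ ensures $3\psi(n)/2 < 5\psi(n) < r_0$, so \eqref{ar} is available for the balls $B(x_0,\psi(n)/2)$ and $B(x_0,3\psi(n)/2)$; and since $T$ preserves $\mu$ we have $\mu(T^{-n}F) = \mu(F)$ for any measurable $F$.

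For the upper bound I would apply \eqref{em} with the ball $F = B(x_0,3\psi(n)/2)$, obtaining $\mu(E\cap A_n) \le \mu(E\cap T^{-n}F) \le \mu(E)\mu(F) + a_n\mu(F) = (\mu(E)+a_n)\mu(F)$, and then estimate $\mu(F) \le \eta_2(3\psi(n)/2)^\delta = \eta_2(3/2)^\delta\psi(n)^\delta$ via \eqref{ar}. For the lower bound I would apply \eqref{em} with $F = B(x_0,\psi(n)/2)$, obtaining $\mu(E\cap A_n) \ge \mu(E\cap T^{-n}F) \ge \mu(E)\mu(F) - a_n\mu(F)$; here I would bound the main term below using $\mu(F) \ge \eta_1 2^{-\delta}\psi(n)^\delta$ and the subtracted error term above using $\mu(F) \le \eta_2 2^{-\delta}\psi(n)^\delta$, which yields $\mu(E\cap A_n) \ge 2^{-\delta}\big(\eta_1\mu(E) - \eta_2 a_n\big)\psi(n)^\delta$.

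There is no genuine obstacle here — this is precisely the $\varepsilon = 1/2$ case of the (commented-out) general estimate, pared down to the exact form needed later. The only points that require a little care are choosing the correct side of the two-sided inequality in \eqref{ar} for the $\mu(E)\mu(F)$ term versus the $a_n\mu(F)$ error term so that the asserted constants $2^{-\delta}$ and $\eta_2(3/2)^\delta$ come out as stated, and confirming that both balls involved have radius below $r_0$ so that \eqref{ar} may legitimately be invoked; both are dispatched by the remarks in the first paragraph.
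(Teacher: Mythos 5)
Your proposal is correct and follows exactly the paper's own proof: specialize Lemma \ref{lemma1} to $r=\psi(n)/2$, plug the resulting two-sided inclusion into the uniform mixing bound \eqref{em} with $E$ and the appropriate ball $F$, invoke $T$-invariance of $\mu$, and then apply Ahlfors regularity \eqref{ar} to the two balls, using $\eta_1$ for the main term and $\eta_2$ for the error term in the lower bound. Your observation that $n>n_0$ guarantees both radii are below $r_0$ matches the paper's setup.
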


\begin{proof}
	Let $r = \psi(n)/2$, and let $E$ be an open ball contained in $f^{-1}B(x_0,r)$. 
Combining (\ref{eq1})   with \eqref{em},
we get
\begin{align*}
		\mu(E\cap A_n)&\geq \mu\Big(E\cap T^{-n}B\big(x_0,\psi(n) - r\big)\Big)\\
		&\geq \mu(E)\mu\big(T^{-n}B(x_0,\psi(n) - r)\big)
		-a_n\mu\big(E\cap T^{-n}B(x_0,\psi(n) - r)\big)\\
		&= \mu(E)\mu\Big(T^{-n}B\big(x_0,\psi(n)/2\big)\Big)
		-a_n\mu\Big(  T^{-n}B\big(x_0,\psi(n)/2\big)\Big)\\
		&= \mu(E)\mu\Big(B\big(x_0,\psi(n)/2\big)\Big)
		-a_n\mu\Big(  B\big(x_0,\psi(n)/2\big)\Big)\\
		&\underset{\eqref{ar}}\geq \big(\eta_1\mu(E)-\eta_2a_n\big)2^{-\delta}\psi(n)^\delta
			\end{align*}
			and
\begin{align*}
		\mu(E\cap A_n)&\leq \mu\Big(E\cap T^{-n}B\big(x_0,\psi(n) + r\big)\Big)\\
		&\leq \mu(E)\mu\big(T^{-n}B(x_0,\psi(n) + r)\big)
		+a_n\mu\big(E\cap T^{-n}B(x_0,\psi(n) + r)\big)\\
		&= \mu(E)\mu\Big(T^{-n}B\big(x_0,3\psi(n)/2\big)\Big)
		+a_n\mu\Big(  T^{-n}B\big(x_0,3\psi(n)/2\big)\Big)\\
		&\leq \mu(E)\mu\Big(B\big(x_0,3\psi(n)/2\big)\Big)
		+a_n\mu\Big( B\big(x_0,3\psi(n)/2\big)\Big)\\
		&\underset{\eqref{ar}}\leq \big(\mu(E)+a_n\big)\eta_2(3/2)^\delta\psi(n)^\delta,
			\end{align*}
establishing the claim. \end{proof}}

To prove Theorems \ref{thm2.4}--\ref{thmbeta}, in view of Lemma \ref{piecewise} it is enough to assume that $f$ is Lipschitz.
Thus for the rest of the paper we let $f:X\to X$ be a $p$-Lipschitz function for some $p>0$. 
 
\smallskip

The next lemma estimates the measure of the sets $A_n$. 


\begin{lemma}\label{lemma4}
	For $n>n_0$, 
	\begin{align*}
		\eta_2^{-1}\eta_1^2 10^{-\delta}\psi(n)^\delta-{(p/5)^{\delta}}a_n&\leq \mu(A_n)
		\leq {\eta_1^{-1}\eta_2(3/2)^\delta\big(\eta_25^{\delta}\psi(n)^\delta}+
		(2p)^{\delta}a_n\big).\end{align*}
\end{lemma}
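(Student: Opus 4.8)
The plan is to cover $X$ by finitely many preimages $f^{-1}B$ of small balls, apply Lemma \ref{lemma3} on each such piece, and sum. First I would fix $n>n_0$ and consider the collection of balls of radius $\psi(n)/2$ needed to see $A_n$. Since $f$ is $p$-Lipschitz, $f^{-1}B(y,s)$ contains the ball $f^{-1}$-preimage issues aside — more usefully, I would work on the target side: choose a maximal $\psi(n)$-separated set $\{x_0^{(j)}\}$ in $X$, so that the balls $B_j := B\big(x_0^{(j)},\psi(n)/2\big)$ are pairwise disjoint while $B\big(x_0^{(j)},\psi(n)\big)$ cover $X$. Then I would split each $f^{-1}B\big(x_0^{(j)},\psi(n)\big)$ further: it can be covered by boundedly many sets of the form $f^{-1}B\big(z,\psi(n)/2\big)$ with $z$ ranging over a $\psi(n)/(2p)$-net, using that $f$ is $p$-Lipschitz so that $f\big(B(z,\psi(n)/(2p))\big)\subset B\big(f(z),\psi(n)/2\big)$; Ahlfors regularity \eqref{ar} bounds the number of net points by a constant depending only on $p,\delta,\eta_1,\eta_2$. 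This reduces the estimate of $\mu(A_n)$ to summing the two-sided bound of Lemma \ref{lemma3} over a controlled family of balls.

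For the \emph{upper} bound: writing $A_n = \bigcup_j (A_n\cap f^{-1}B_j')$ where $B_j' = B\big(x_0^{(j)},\psi(n)\big)$ is replaced by its decomposition into sub-balls $E$ on which Lemma \ref{lemma3} applies, I would sum $\mu(E\cap A_n)\le \eta_2(3/2)^\delta(\mu(E)+a_n)\psi(n)^\delta$. The $\sum\mu(E)$ term is controlled because the $E$'s, after enlarging by the covering multiplicity, have total measure $\lesssim \mu(X) = 1$; more precisely, grouping the $E$'s lying over a fixed $B_j'$, their measures sum to at most (bounded multiplicity)$\cdot\mu(f^{-1}B_j')$, and the $f^{-1}B_j'$ have bounded overlap, so $\sum_E\mu(E)\lesssim 1$, and by \eqref{ar} applied to $X$ one gets $1\le \eta_1^{-1}(\diameter X)^\delta$; tracking the constants carefully yields the stated $\eta_1^{-1}\eta_2(3/2)^\delta\eta_2 5^\delta\psi(n)^\delta$ main term. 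The $\sum_E a_n$ term contributes the $a_n$-term: the number of balls $E$ is $\lesssim \psi(n)^{-\delta}$ by \eqref{ar}, but one must instead bound the number of sub-balls per $B_j'$ by a constant (this is where the $p$-Lipschitz covering number $\asymp (2p)^\delta$ enters) and the number of $B_j'$ by $\asymp \psi(n)^{-\delta}$, so $\sum_E a_n \lesssim p^\delta\psi(n)^{-\delta}a_n$; multiplying by $\psi(n)^\delta$ from Lemma \ref{lemma3} gives an $a_n$-term with constant $\asymp(2p)^\delta$ as claimed.

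For the \emph{lower} bound: I would pick a single well-chosen ball $E = f^{-1}B\big(x_0,\psi(n)/2\big)$, or rather use disjointness — choose a $\psi(n)$-separated net and on each disjoint piece apply the lower estimate of Lemma \ref{lemma3}, $\mu(E\cap A_n)\ge 2^{-\delta}(\eta_1\mu(E)-\eta_2 a_n)\psi(n)^\delta$, then sum over disjoint pieces. One needs a lower bound on $\mu(E) = \mu\big(f^{-1}B(x_0,\psi(n)/2)\big)$: since $f$ is $p$-Lipschitz, $f^{-1}B(x_0,\psi(n)/2)\supset B\big(w,\psi(n)/(2p)\big)$ whenever $f(w)=x_0$ and this small ball lies in a single Lipschitz piece, giving $\mu(E)\ge \eta_1(\psi(n)/(2p))^\delta$ by \eqref{ar} — but this requires $x_0\in f(X)$, so I would instead choose the net points $x_0^{(j)}$ inside $f(X)$ (or observe $\mu(X\ssm \overline{f(X)})$ need not vanish and so work with a net in the essential range of $f$). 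Summing the disjoint lower bounds and using that a $\psi(n)$-separated net in $f(X)$ has $\asymp \psi(n)^{-\delta}$ points (again by \eqref{ar}, provided $f(X)$ has positive measure; if $\mu(f(X))=0$ the lemma may need the convention that the left side is negative, which is harmless) produces the main term $\eta_2^{-1}\eta_1^2 10^{-\delta}\psi(n)^\delta$ together with the error $-(p/5)^\delta a_n$.

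The main obstacle I anticipate is bookkeeping the covering/packing constants so that exactly the stated numerical factors ($10^{-\delta}$, $5^\delta$, $(p/5)^\delta$, $(2p)^\delta$) come out, rather than unspecified implied constants; the conceptual content is entirely in Lemma \ref{lemma3} plus Ahlfors regularity, and the only genuine subtlety is ensuring the base points of the covering balls lie in $f(X)$ for the lower bound so that $f^{-1}B$ is nonempty and contains a ball of comparable radius.
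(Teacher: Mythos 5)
Your core strategy is the same as the paper's: cover $X$ by small domain balls $B\bigl(y,\psi(n)/(2p)\bigr)$, observe that the $p$-Lipschitz condition forces $B\bigl(y,\psi(n)/(2p)\bigr)\subset f^{-1}B\bigl(f(y),\psi(n)/2\bigr)$, apply Lemma~\ref{lemma3} ball by ball, and count the balls via Ahlfors regularity. But the paper does this in a \emph{single} stage: it applies Vitali's $5r$-covering lemma directly to the family $\bigl\{B\bigl(y,\psi(n)/(2p)\bigr):y\in X\bigr\}$, obtaining disjoint balls $B\bigl(y_j,\psi(n)/(2p)\bigr)$ (for the lower bound and for the counting $|\mathcal J|\lesssim \psi(n)^{-\delta}$) whose $5$-times dilates cover $X$ (for the upper bound). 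Your two-stage version --- first a $\psi(n)$-separated net $\{x_0^{(j)}\}$ on the target side, then a $\psi(n)/(2p)$-subnet of each $f^{-1}B\bigl(x_0^{(j)},\psi(n)\bigr)$ --- adds no information and introduces the bookkeeping headaches you yourself flag. It also contains a genuine misstatement: $f^{-1}B\bigl(x_0^{(j)},\psi(n)\bigr)$ is \emph{not} covered by ``boundedly many'' balls $B\bigl(z,\psi(n)/(2p)\bigr)$; their number is comparable to $\mu\bigl(f^{-1}B\bigl(x_0^{(j)},\psi(n)\bigr)\bigr)\psi(n)^{-\delta}$, which varies with $j$ and can be large. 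Your overall count is only rescued by the global bound $\sum_E\mu(E)\lesssim 1$, at which point the first stage was pointless. Finally, the subtlety you raise about needing the target-ball centers to lie in $f(X)$ for the lower bound is real within your set-up, but it evaporates in the paper's: since the covering is built from domain points $y_j$ and the target balls are $B\bigl(f(y_j),\cdot\bigr)$, their centers are in $f(X)$ by construction. So: right ideas, needlessly roundabout route, one wrong intermediate claim, and the exact constants you concede you cannot pin down fall out cleanly from the one-stage Vitali covering.
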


\begin{proof}
	Take $x\in X$, $y\in f^{-1}\{x\}$  
	and $z\in B\left(y,\frac{\psi(n)}{2p}\right)$. Then by the $p$-Lipshitz condition, $d\big(x,f(z)\big)\leq pd(y,z)<\psi(n)/2$. Thus $$B\big(y, \psi(n)/p\big)\subset f^{-1}B\big(x,\psi(n)/2\big).$$
	We have an open covering
	\begin{equation*}
		\left\{B\big(y, \psi(n)/p\big):y\in X\right\}
	\end{equation*} with each $B\left(y,\frac{\psi(n)}{2p}\right)\subset f^{-1}B\big(x,\psi(n)/2\big)$ for some $x\in X$. \par
	By Vitali's covering theorem ($5r$-covering lemma), we can find countably many disjoint balls $\left\{B\left(y_j,\frac{\psi(n)}{2p}\right)\right\}_{j\in \mathcal{J}}$ such that 
\begin{equation}\label{eq3}
	 X\subset \bigcup_{j\in \mathcal{J}}B\left(y_j,\frac{5\psi(n)}{2p}\right).
\end{equation}
By the 
disjointness of $\left\{B\left(y_j,\frac{\psi(n)}{2p}\right)\right\}_{j\in \mathcal{J}}$, we have 
\begin{equation*}
	\sum_{j\in \mathcal{J}}\eta_1\left(\frac{\psi(n)}{2p}\right)^\delta\leq \sum_{j\in \mathcal{J}}\mu\left(B\Big(y_j,\frac{\psi(n)}{2p}\Big)\right)\leq \mu(X)=1.
\end{equation*} 
Hence $|\mathcal{J}|\leq \eta_1^{-1}\left(\frac{2p}{\psi(n)}\right)^\delta$. On the other hand, by 
(\ref{eq3})  we have 
\begin{equation*}
\sum_{j\in \mathcal{J}}\eta_2\left(\frac{5\psi(n)}{2p}\right)^\delta\geq \sum_{j\in\mathcal{J}}\mu\left(B\Big(y_j,\frac{5\psi(n)}{2p}\Big)\right)\geq \mu(X)=1;
\end{equation*}
hence $|\mathcal{J}|\geq \eta_2^{-1}\left(\frac{2p}{5\psi(n)}\right)^{\delta}$. \par 
By Lemma \ref{lemma3}, 
since for each $j$ we have  $B\left(y_j,\frac{\psi(n)}{2p}\right)\subset f^{-1}B\big(x_j,\psi(n)/2\big)$ and $B\left(y_j,\frac{5\psi(n)}{2p}\right)\subset f^{-1}B\big(x_j,5\psi(n)/2\big)$, it follows that
\begin{align*}
\mu(A_n)\leq & 	\sum_{j\in J} \mu\left(B\Big(y_j,\frac{5\psi(n)}{2p}\Big)\cap A_n\right)\\
\leq & \sum_{j\in J}\eta_2(3/2)^\delta\left[\mu\left(B\Big(y_j,\frac{5\psi(n)}{2p}\Big)\right)+a_n\right] \psi(n)^\delta\\
\leq & \ \eta_1^{-1}\left(\frac{2p}{\psi(n)}\right)^\delta \eta_2(3/2)^\delta\left[ \Big(\frac{5\psi(n)}{2p}\Big)^\delta \eta_2+a_n\right]\psi(n)^\delta\\
{=} &\  {\eta_1^{-1}\eta_2(3/2)^\delta\big(\eta_25^{\delta}\psi(n)^\delta}+
		(2p)^{\delta}a_n\big)  \end{align*}
 and
\begin{align*}\mu(A_n) \geq & \sum_{j\in\mathcal{J}}\mu\left(B\Big(y_j,\frac{\psi(n)}{2p}\Big)\cap A_n\right)\\
\geq &\ \sum_{j\in J} \left[\eta_1  2^{-\delta}\mu\left(B\Big(y_j,\frac{\psi(n)}{2p}\Big)\right)-\eta_22^{-\delta}a_n \right]\psi(n)^\delta\\
\geq &\  \eta_2^{-1}\left(\frac{2p}{5\psi(n)}\right)^{\delta}\left[\eta_1  2^{-\delta}  \eta_1\Big(\frac{\psi(n)}{2p}\Big)^\delta -\eta_22^{-\delta}a_n\right]\psi(n)^\delta\\
{=}  &\  \eta_2^{-1}\eta_1^2  10^{-\delta}\psi(n)^\delta-\eta_2^{-1}{(2p/5)^{\delta}}\eta_22^{-\delta}a_n,
\end{align*}
finishing the proof of the lemma.\end{proof}

\begin{proposition}\label{propm}
	\begin{equation}\label{p17}
		\sum_{n=1}^\infty \psi(n)^\delta=\infty \iff \sum_{n=1}^\infty \mu(A_n)=\infty
	\end{equation}
\end{proposition}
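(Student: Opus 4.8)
The plan is to prove \equ{p17} as an immediate consequence of the two-sided estimate for $\mu(A_n)$ established in Lemma \ref{lemma4}, together with the summability of the sequence $(a_n)$ from the uniform mixing assumption \eqref{em}. The key point is that Lemma \ref{lemma4} sandwiches $\mu(A_n)$ between $c_1\psi(n)^\delta - c_2 a_n$ and $c_3\psi(n)^\delta + c_4 a_n$ for positive constants $c_1,c_2,c_3,c_4$ depending only on $\delta,\eta_1,\eta_2,p$ (valid for all $n > n_0$), and since $\sum_n a_n < \infty$ the $a_n$-terms contribute only a finite amount to either series.

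First I would recall that altering finitely many terms of a series does not affect its convergence or divergence, so it suffices to compare $\sum_{n > n_0}\psi(n)^\delta$ with $\sum_{n > n_0}\mu(A_n)$. For the direction ``$\Rightarrow$'', suppose $\sum_n \psi(n)^\delta = \infty$. Using the lower bound $\mu(A_n) \geq c_1\psi(n)^\delta - c_2 a_n$ from Lemma \ref{lemma4}, we get $\sum_{n > n_0}\mu(A_n) \geq c_1\sum_{n>n_0}\psi(n)^\delta - c_2\sum_{n>n_0} a_n = \infty$, since the first sum diverges and the second is finite. For the direction ``$\Leftarrow$'', suppose $\sum_n\mu(A_n) = \infty$; using the upper bound $\mu(A_n) \leq c_3\psi(n)^\delta + c_4 a_n$, we get $c_3\sum_{n>n_0}\psi(n)^\delta \geq \sum_{n>n_0}\mu(A_n) - c_4\sum_{n>n_0}a_n = \infty$, so $\sum_n\psi(n)^\delta = \infty$.

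There is essentially no obstacle here; the proposition is a bookkeeping corollary of Lemma \ref{lemma4}. The only mild point worth noting is that one must invoke the summability of $(a_n)$, which is exactly what is assumed in \eqref{em}, to discard the error terms. I would also note in passing (though it is not strictly needed for this statement) that combined with the Borel--Cantelli lemma this already yields the convergence half of Theorem \ref{thm2.4}: if $\sum_n\psi(n)^\delta < \infty$ then $\sum_n\mu(A_n) < \infty$, hence $\mu(R_T^f(\psi)) = \mu(\limsup A_n) = 0$.
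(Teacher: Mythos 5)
Your proof is correct and follows exactly the paper's own argument: sum the two-sided estimate from Lemma \ref{lemma4} over $n > n_0$ and use the summability of $(a_n)$ from \eqref{em} to discard the error terms. The extra remark about Borel--Cantelli also matches the remark the paper makes immediately after the proposition.
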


\begin{proof}
	By Lemma \ref{lemma4} we know that 
	\begin{align*}
		&
		\eta_2^{-1}\eta_1^2 10^{-\delta}\sum_{n>n_0}\psi(n)^\delta-{(p/5)^{\delta}}\sum_{n>n_0}a_n\\
		\leq & 	\sum_{n>n_0}
		 \mu(A_n) \\
		\leq & \ \eta_1^{-1}\eta_2(3/2)^\delta\left(\eta_25^{\delta}\sum_{n>n_0}\psi(n)^\delta+
		(2p)^{\delta}\sum_{n>n_0}a_n\right).
	\end{align*} Since $\{a_n\}$ is summable, (\ref{p17}) holds. 
\end{proof} \par 

\begin{remark} \rm
{Note that  Proposition \ref{propm} and the Borel--Cantelli Lemma immediately imply the convergence case of Theorem \ref{thm2.4}: if $\sum_{n=1}^\infty \psi(n)<\infty$, then} 
$\mu\big(R^f_T(\psi)\big)=\mu(\limsup_n A_n)=0$. Note also that or this conclusion one only needs the first two conditions of Theorem \ref{thm2.4}, that is,  \eqref{ar} and \eqref{em};  the remaining conditions \eqref{eq10}--\eqref{conf} will be used in the proof of the divergence case.
\end{remark}

\section{A quasi-independence estimate }\label{qind}

Now let us make use of assumptions \eqref{eq10}--\eqref{conf}.
The following   lemma 
was stated and used in \cite{hussain}; we prove it here since our set-up is slightly different. 
\begin{lemma}\label{lemma6}
	For $m\in\N$, $J$ a cylinder in $\mathcal{F}_m$ and for any open set $U$ contained in $J$, $\mu(T^m U)\asymp K_{J}^\delta \mu(U)$. 
\end{lemma}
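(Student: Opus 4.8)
The claim is that for a cylinder $J \in \mathcal{F}_m$ and an open set $U \subset J$, one has $\mu(T^m U) \asymp K_J^\delta \mu(U)$, with implied constants depending only on the structural constants $\eta_1, \eta_2, K_1, K_2, \delta$. The natural approach is a Vitali covering argument: cover $U$ efficiently by balls contained in $U$ (and hence in $J$), apply the conformality assumption \eqref{conf} to see that $T^m$ sends each such ball to a ball whose radius is comparable (up to $K_2^{\pm 1}$) to $K_J$ times the original radius, and then compare the measures of the two families using Ahlfors regularity \eqref{ar}. Since $T^m$ is injective on $J$, the images of disjoint balls are disjoint, which is what makes the packing estimates transfer across $T^m$.

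First I would handle the lower bound $\mu(T^m U) \gtrsim K_J^\delta \mu(U)$. Pick a Vitali/$5r$-covering of $U$: countably many disjoint balls $B(x_j, r_j) \subset U$ with $U \subset \bigcup_j B(x_j, 5r_j)$. Then $\mu(U) \le \sum_j \mu(B(x_j,5r_j)) \lesssim \eta_2 5^\delta \sum_j r_j^\delta$ by \eqref{ar}. On the other hand, by the first inclusion in \eqref{conf}, $B(T^m x_j, K_2^{-1} K_J r_j) \subset T^m B(x_j, r_j) \subset T^m U$, and these balls are pairwise disjoint because $T^m$ is injective on $J \supset U$; hence by \eqref{ar} again, $\mu(T^m U) \ge \sum_j \mu\big(B(T^m x_j, K_2^{-1}K_J r_j)\big) \gtrsim \eta_1 (K_2^{-1}K_J)^\delta \sum_j r_j^\delta$. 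Combining the two displays gives $\mu(T^m U) \gtrsim K_J^\delta \mu(U)$. For the upper bound I would run the symmetric argument, covering $U$ by disjoint balls $B(x_j, r_j)$ whose $5$-dilates still lie in $J$ and cover $U$; the second inclusion in \eqref{conf} (equivalently the bounded-distortion consequence noted in Remark \ref{constants}) gives $T^m B(x_j, 5 r_j) \subset B(T^m x_j, 5 K_2 K_J r_j)$, so $T^m U \subset \bigcup_j B(T^m x_j, 5K_2 K_J r_j)$, and then $\mu(T^m U) \le \sum_j \mu\big(B(T^m x_j, 5K_2K_J r_j)\big) \lesssim \eta_2 (5K_2K_J)^\delta \sum_j r_j^\delta \lesssim \eta_2 \eta_1^{-1} (5K_2K_J)^\delta \mu(U)$, using disjointness of the $B(x_j,r_j)$ and \eqref{ar} to bound $\sum_j r_j^\delta$ by $\mu(U)$.

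The main technical point — and the thing to be careful about — is ensuring that all the balls involved are small enough that the Ahlfors-regularity estimate \eqref{ar} applies (radii below $r_0$) and that the dilated balls $B(x_j, 5r_j)$ stay inside the open set $J$ so that \eqref{conf} is legitimate; since $J$ is open one can always refine the cover to balls of arbitrarily small radius whose $5$-dilates remain in $J$, at the cost of a harmless use of the Vitali lemma. A secondary subtlety is that \eqref{conf} as stated requires the ball to be contained in the cylinder, which is exactly why we take the covering balls inside $U \subset J$ rather than balls merely meeting $U$. Once these bookkeeping points are in place, the estimate is a routine double Vitali-covering comparison, essentially the same computation already performed in the proof of Lemma \ref{lemma4}; I would in fact remark that the argument is parallel to that one. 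The constants accumulated ($5^\delta$, $K_2^{\pm\delta}$, $\eta_1^{-1}\eta_2$) are all absolute, so the final comparison $\mu(T^m U) \asymp K_J^\delta \mu(U)$ holds with constants independent of $m$, $J$, and $U$.
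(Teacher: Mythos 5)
Your proposal matches the paper's argument: the paper likewise covers $U$ by a Vitali subfamily of disjoint balls $B(x,r)$ (with $B(x,5r)\subset U$ and $r<r_0$), transports both the disjoint balls and their $5$-dilates through $T^m$ using injectivity on $J$ together with \eqref{conf}, and then compares the resulting sums $\sum r^\delta$ on both sides via Ahlfors regularity \eqref{ar}. You present the upper and lower inequalities as two symmetric runs of the covering argument while the paper folds them into a single two-sided $\asymp$, but the covering, the use of injectivity, and the appeal to \eqref{conf} and \eqref{ar} are all the same.
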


\begin{proof}
	By 
	\eqref{ar} and \eqref{conf}, we know  that for all open balls $B\subset J$ with  {radius smaller than $r_0$}, it holds that $\mu(T^m B)\asymp K_{J}^\delta \mu(B)$.	 Let $U\subset J$ be an open subset. Consider the cover 
	 \begin{equation*}
	 	\mathcal{S}=\big\{B(x,r):x\in U, \ B(x,5r)\subset U,\  r<r_0\big\}
	 \end{equation*}
	 of $U$. By Vitali's covering theorem, $\mathcal{S}$ has a countable sub-collection $\mathcal{B}$ of disjoint balls so that 
	 \begin{equation*}
	 	\bigcup_{B(x,r)\in \mathcal{B}}B(x,r)\subset U \subset \bigcup_{B(x,r)\in \mathcal{B}}B(x,5r).
	 \end{equation*}
	 Since $T^m$ is injective on $J$,
	 \begin{equation*}
	 	\bigcup_{B(x,r)\in \mathcal{B}}T^mB(x,r)\subset T^mU \subset \bigcup_{B(x,r)\in \mathcal{B}}T^mB(x,5r).
	 \end{equation*}
	 Hence 
	 \begin{equation*}
	 	K_J^\delta \sum_{B(x,r)\in \mathcal{B}}\mu\big(B(x,r)\big)\asymp \mu(T^mU).
	 \end{equation*}
	 On the other hand, $\mu(U)\asymp \sum_{B(x,r)\in \mathcal{B}}\mu\big(B(x,r)\big)$, and the lemma is proved. 
\end{proof}

\ignore{\comm{Maybe we should add a remark here somehow addressing the reviewer's comment.}}

	
{Now recall that we were working with the sets $A_n$ defined in \eqref{twsets}. The next lemma shows that the intersection of a cylinder of high enough level with $A_n$ is contained in a small ball. Namely,} let $m_0 \geq n_0$ be such that $K_J>\max\left\{\frac{K_1\diameter (X)}{r_0}, 2p\right\}$ for all $m>m_0$ and $J\in \mathcal{F}_m$ (which is possible in view of \eqref{toinfty}).

\begin{lemma}\label{lem3.6}
 {For $m>m_0$}, for every cylinder $J\in \mathcal{F}_m$ and any $z\in J\cap A_m$ there exists a ball of radius 
\begin{equation}\label{starR}r=\frac{2\psi(m)}{K_{J}-p}, 	\end{equation}	
say $B(z,r)$, such that 
	\begin{equation}\label{starJ}
		J\cap A_m\subset B(z,r)\cap J.
	\end{equation}	
\end{lemma}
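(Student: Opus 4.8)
The plan is to show that once $m$ is large enough, the set $J\cap A_m$ has diameter at most $2r$, so that it is contained in the ball of radius $r$ centered at any of its points; in particular, at the point $z$ specified in the statement. So first I would fix $m>m_0$, a cylinder $J\in\mathcal{F}_m$, and two points $x,y\in J\cap A_m$. By definition of $A_m$ in \eqref{twsets}, we have $d\big(T^mx,f(x)\big)<\psi(m)$ and $d\big(T^my,f(y)\big)<\psi(m)$.

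Next I would combine the expanding lower bound on $J$ with the Lipschitz property of $f$. Since $m>m_0$, the choice of $m_0$ guarantees $K_J>2p$, and by definition of $K_J$ we have $d(T^mx,T^my)\ge K_J\, d(x,y)$. On the other hand, the triangle inequality gives
\begin{equation*}
d(T^mx,T^my)\le d\big(T^mx,f(x)\big)+d\big(f(x),f(y)\big)+d\big(f(y),T^my\big)< 2\psi(m)+p\,d(x,y),
\end{equation*}
using that $f$ is $p$-Lipschitz. Putting these together yields $K_J\, d(x,y)< 2\psi(m)+p\,d(x,y)$, i.e. $(K_J-p)\,d(x,y)<2\psi(m)$, and since $K_J-p>0$ we get
\begin{equation*}
d(x,y)<\frac{2\psi(m)}{K_J-p}=r.
\end{equation*}
Thus any two points of $J\cap A_m$ are within distance $r$ of each other; in particular, for the given $z\in J\cap A_m$, every $x\in J\cap A_m$ satisfies $d(x,z)<r$, which is exactly \eqref{starJ}.

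I do not anticipate a serious obstacle here: the argument is a short combination of the definitions of $A_m$ and $K_J$ with the $p$-Lipschitz bound on $f$ and the triangle inequality, and the only role of the hypothesis $m>m_0$ is to ensure $K_J>2p$ so that the denominator $K_J-p$ is positive (in fact bounded below, though only positivity is needed here). The condition $K_J>K_1\diameter(X)/r_0$ built into the definition of $m_0$ is not needed for this particular lemma — it is presumably there for later use — but including it does no harm. One small point to be careful about is that the statement asserts the existence of \emph{a} ball of radius $r$ containing $J\cap A_m$, namely $B(z,r)$ for the specific $z$ given; the diameter bound above delivers precisely this, so no further work is required.
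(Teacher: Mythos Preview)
Your proposal is correct and follows essentially the same route as the paper: fix two points of $J\cap A_m$, use the definition of $K_J$ to bound $d(T^mx,T^my)$ from below by $K_J\,d(x,y)$, use the triangle inequality together with the $p$-Lipschitz property of $f$ and membership in $A_m$ to bound it from above by $2\psi(m)+p\,d(x,y)$, and solve for $d(x,y)$. The only cosmetic difference is that the paper cites \eqref{eq10} for the lower bound, whereas you (correctly) note that it follows directly from the definition of $K_J$; your observation that only $K_J>p$ is needed here is also accurate.
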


\begin{proof}
	Choose any $x,z\in J\cap A_m$. 
	Since $J\in \mathcal{F}_m$, in view of \eqref{eq10} we have	\begin{equation*}
	d(T^mx,T^mz)K_{J}^{-1}\geq d(x,z)	.
	\end{equation*}
On the other hand,
\begin{align*}
	d(T^mx,T^mz)&\leq d\big(T^mx,f(x)\big)+d\big(f(x),f(z)\big)+d\big(T^mz,f(z)\big)\\
	&\leq  2\psi(m)+pd(x,z).
\end{align*}
Then $K_{J}d(x,z)<2\psi(m)+pd(x,z)$, i.e.\ $d(x,z)<\frac{2\psi(m)}{K_{J}-p}$. 
	\end{proof}

Now let us prove a quasi-independence property of the sequence $\{A_n\}_n$. For any $m\in\N$ and $J\in \mathcal{F}_m$ define 
\begin{align}
	J^*:=B(z,r)\cap J,\label{jstar}
\end{align}
where $r$ and $z$ are defined in  (\ref{starR}) and (\ref{starJ}). 

\begin{lemma}\label{localqilemma}
For all $n>m>m_0$ and for each $J\in \mathcal{F}_m$, 
\begin{equation*}
	\mu(J\cap A_m\cap A_n)\lesssim K_{J}^{-\delta}\left[\psi(m)^\delta\psi(n)^\delta+a_{n-m}\psi(n)^\delta+a_n\psi(m)^\delta\right].
\end{equation*}
\end{lemma}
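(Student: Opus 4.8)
The plan is to exploit Lemma \ref{lem3.6}, which tells us that $J \cap A_m$ sits inside the small ball $J^* = B(z,r) \cap J$ with $r = 2\psi(m)/(K_J - p)$, so that
\[
\mu(J \cap A_m \cap A_n) \le \mu(J^* \cap A_n).
\]
Since $J^* \subset J \in \mathcal{F}_m$ and $T,\dots,T^m$ are injective on $J$, the idea is to push $J^*$ forward by $T^m$, apply the uniform mixing estimate \eqref{em} on the image at time $n - m$, and then pull back. More precisely, writing $E = J^*$ and noting $J^* \cap A_n \subset J^* \cap T^{-n} B(x_0, \psi(n) + \rho)$ for a suitable ball radius $\rho$ (via Lemma \ref{lemma1}, after first covering $f(J^*)$ by a ball of radius comparable to $\psi(m)$ using the $p$-Lipschitz property of $f$ and the smallness of $\diameter J^* \asymp r \lesssim \psi(m)/K_J$), we reduce to estimating $\mu\big(J^* \cap T^{-n} B(x_0, \rho)\big)$ with $\rho \asymp \psi(n)$.

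The key step is to relate $\mu\big(J^* \cap T^{-n}B(x_0,\rho)\big)$ to a mixing estimate at time $n - m$. Write $T^{-n} = T^{-m} \circ T^{-(n-m)}$. On the cylinder $J$ the map $T^m$ is injective, so $J^* \cap T^{-n} B(x_0,\rho) = J^* \cap T^{-m}\big(T^{-(n-m)}B(x_0,\rho)\big)$, and by Lemma \ref{lemma6}, $\mu(T^m J^*) \asymp K_J^\delta \mu(J^*)$; conversely $\mu(J^*) \asymp K_J^{-\delta}\mu(T^m J^*)$. Combining Lemma \ref{lemma6} applied to $J^* \cap T^{-n}B(x_0,\rho)$ as an open subset of $J$ with the mixing inequality \eqref{em} applied to the balls $T^m J^*$ (or rather a controlled ball containing it — here bounded distortion \eqref{eq10} and conformality \eqref{conf} guarantee $T^m J^*$ is comparable to a ball, with $\mu(T^m J^*) \asymp \psi(m)^\delta$) and $B(x_0,\rho)$ at time $n-m$, we get
\[
\mu\big(J^* \cap T^{-n}B(x_0,\rho)\big) \lesssim K_J^{-\delta}\Big(\mu(T^m J^*)\,\rho^\delta + a_{n-m}\,\rho^\delta\Big) \lesssim K_J^{-\delta}\big(\psi(m)^\delta \psi(n)^\delta + a_{n-m}\psi(n)^\delta\big).
\]
This produces the first two terms of the claimed bound.

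The third term, $K_J^{-\delta} a_n \psi(m)^\delta$, should come from a competing estimate that is more efficient when $n - m$ is small (so $a_{n-m}$ is not small) but $n$ itself is large. There one bounds $\mu(J \cap A_m \cap A_n) \le \mu(J \cap A_n)$ and applies essentially the argument of Lemma \ref{lemma4}/Lemma \ref{lemma3} localized to the cylinder $J$: cover $f(J)$-relevant region, use \eqref{em} at time $n$ directly, and use $\mu(J) \asymp K_J^{-\delta}$ from Lemma \ref{lemma6} (since $\mu(T^m J) \asymp 1$). This gives $\mu(J \cap A_n) \lesssim K_J^{-\delta}(\psi(n)^\delta + a_n)$, which combined with the trivial bound $\mu(J\cap A_m) \lesssim K_J^{-\delta}\psi(m)^\delta$ and taking a geometric-mean-type or min of the two estimates yields the $a_n \psi(m)^\delta$ contribution. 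Taking the sum (or minimum) of the two bounds and absorbing constants gives the stated inequality.

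The main obstacle I expect is the bookkeeping around pushing forward $J^*$ by $T^m$ and getting a clean ball to feed into \eqref{em}: one must verify that $T^m J^*$ is contained in (and contains) balls of radius $\asymp \psi(m)$ — this requires conformality \eqref{conf} together with the choice of $m_0$ ensuring radii stay below $r_0$ — and that the Lipschitz image $f(J^*)$ is small enough (radius $\lesssim \psi(m)$, using $p < K_J/2$) that Lemma \ref{lemma1} applies with $\rho \asymp \psi(n)$ and not something larger. Handling the case distinction cleanly (whether to route through time $n-m$ or time $n$) so that all three terms appear simultaneously in one inequality, rather than as a case analysis, is the other delicate point; this is presumably done by simply adding the two upper bounds.
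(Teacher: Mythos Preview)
Your treatment of the first two terms is essentially the paper's Case~(i) argument: when $pr \le \psi(n)$ (so the $p$-Lipschitz image $f(J^*)\subset B\big(f(z),pr\big)$ fits inside a ball of radius $\le\psi(n)$), Lemma~\ref{lemma1} gives $J^*\cap A_n\subset J^*\cap T^{-n}B\big(f(z),2\psi(n)\big)$, and then pushing forward by $T^m$ via Lemma~\ref{lemma6}, using conformality \eqref{conf} to enclose $T^mJ^*$ in a ball of radius $\asymp\psi(m)$, and applying \eqref{em} at time $n-m$ yields $K_J^{-\delta}\big(\psi(m)^\delta\psi(n)^\delta+a_{n-m}\psi(n)^\delta\big)$. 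That part is fine.

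The gap is in how you obtain the third term $K_J^{-\delta}a_n\psi(m)^\delta$. Your route --- bound $\mu(J\cap A_m\cap A_n)\le\mu(J\cap A_n)$, claim $\mu(J\cap A_n)\lesssim K_J^{-\delta}(\psi(n)^\delta+a_n)$, and then combine with $\mu(J\cap A_m)\lesssim K_J^{-\delta}\psi(m)^\delta$ by a ``min'' or ``geometric mean'' --- does not produce $K_J^{-\delta}a_n\psi(m)^\delta$. Neither $\min\big(K_J^{-\delta}(\psi(n)^\delta+a_n),\,K_J^{-\delta}\psi(m)^\delta\big)$ nor the geometric mean gives a bound with the \emph{product} $a_n\psi(m)^\delta$; you only get one factor or the other. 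Moreover, the crucial point you miss is that the Case~(i) argument \emph{fails} when $pr>\psi(n)$: then $\psi(n)+pr$ is not $\asymp\psi(n)$, so your ``$\rho\asymp\psi(n)$'' claim breaks down. The paper handles this second case differently: one covers $B\big(f(z),pr\big)$ by $N\asymp\big(pr/\psi(n)\big)^\delta$ balls of radius $\psi(n)$, applies Lemma~\ref{lemma1} and mixing \eqref{em} at time $n$ (not $n-m$) to each small ball to get a contribution $\lesssim(\psi(n)^\delta+a_n)\psi(n)^\delta$, and sums. The factor $\psi(m)^\delta$ then emerges from $N\cdot\psi(n)^\delta\asymp(pr)^\delta\asymp K_J^{-\delta}\psi(m)^\delta$, giving exactly $K_J^{-\delta}\psi(m)^\delta(\psi(n)^\delta+a_n)$. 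This covering step, exploiting the smallness of $J^*$ rather than of $J$, is the missing idea.
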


\begin{proof}
	Let $J\in \mathcal{F}_m$. 
	By Lemma \ref{lem3.6},
	\begin{equation*}
		J\cap A_m\cap A_n \subset J^*\cap A_n
	\end{equation*} where $J^*$ is defined in \eqref{jstar}. Now let us estimate $\mu(J^*\cap A_n)$. \par 
	\smallskip
	\underline{Case (i)}: $pr=\frac{2p\psi(m)}{K_{J}-p}\leq \psi(n)$.  
	
	\smallskip
{Note  that for all $x\in B(z,r)$ we have $d\big(f(x),f(z)\big)<pd(x,z)<pr$, therefore $B(z,r)\subset f^{-1}B\big(f(z),pr\big)$}. Thus $J^*$ is a subset of $f^{-1}B\big(f(z),pr\big)$, and we can apply Lemma \ref{lemma1} to $J^*$ and obtain 
	\begin{equation}\label{eq9}
		J^*\cap A_m \subset J^*\cap T^{-n}B\big(f(z),\psi(n)+pr\big)\subset  J^*\cap T^{-n}B\big(f(z),2\psi(n)\big).
	\end{equation}
	Then apply Lemma \ref{lemma6} to $J^*\cap T^{-n}B\big(f(z),2\psi(n)\big)$, getting $$
	\begin{aligned}
	\mu\Big(J^*\cap T^{-n}B\big(f(z),2\psi(n)\big)\Big)&\lesssim K_{J}^{-\delta}\Big(T^mJ^*\cap T^{-(n-m)}B\big(f(z),2\psi(n)\big)\Big)\\
&\le K_{J}^{-\delta}\mu\Big(T^mB(z,r)\cap T^{-(n-m)}B\big(f(z),2\psi(n)\big)\Big).
\end{aligned}	$$
{Since $m>m_0$, we have $\inf_{J\in\mathcal F_m}K_{J}>2p$. Then by the conformality assumption (1.12), we have}
 \begin{align*}
 T^mB(z,r)&=T^m B\left(z,\frac{2\psi(m)}{K_{J}-p}\right) \\
 &\subset B\left(T^m z, K_2 K_{J}\frac{2\psi(m)}{K_{J}-p}\right)\subset B\big(T^m z,K_24\psi(m)\big),
 \end{align*}  where $K_2$ is defined in (1.12). 
Thus 
$$\mu(J^*\cap A_n)\lesssim K_{J}^{-\delta}\mu\Big( B\big(T^mz,4K_2\psi(m)\big)\cap T^{-(n-m)}B\big(f(z),2\psi(n)\big)\Big).$$

  By the mixing property 
  (1.8), 
		\begin{align*}
		&\mu(J^*\cap A_n)\lesssim K_{J}^{-\delta}\mu\Big(T^m B\big(T^mz,4K_2\psi(m)\big)\cap T^{-(n-m)}B\big(f(z),2\psi(n)\big)\Big)\\
		\lesssim\  &K_{J}^{-\delta} \left[\mu \Big(B\big(T^mz,4K_2\psi(m)\big)\Big)  \mu \Big(B\big(z,2\psi(n)\big)\Big)+ a_{n-m}\mu \Big(B\big(z,2\psi(n)\big)\Big)\right]\\
		\lesssim\  &K_{J}^{-\delta} \left[\psi(m)^\delta\psi(n)^\delta+a_{n-m}\psi(n)^\delta\right].
	\end{align*}\par 
	
	\smallskip
	\underline{Case (ii)}: $pr=\frac{2p\psi(m)}{K_{J}-p}> \psi(n)$.
	
	\smallskip
	
	 We replace the ball $B\big(f(z),pr\big)$ by a collection of balls of radius $\psi(n)$. Choose a maximal $\frac{3}{2}\psi(n)$-separated set in $B\big(f(z),pr\big)$, denoted by $\{z_i\}_{1\leq i\leq N_{m,n}}$. Then 
	\begin{align*}
		B\big(f(z),pr\big)\subset \bigcup_{i=1}^{N_{m,n}} B\big(z_i,\psi(n)\big)\subset B\big(f(z),2pr\big).
	\end{align*}
	Since $\mu$ is Ahlfors regular and $m>n_0$, 
	\begin{align*}
	N_{m,n}\asymp \left(\frac{p\psi(m)}{(K_{J}-p)\psi(n)}\right)^\delta .
	\end{align*}
	Since $B\big(z_i,\psi(n)\big)\subset f^{-1}B\big(f(z_i),p\psi(n)\big)$, we can apply Lemma 3.1 to each ball $B\big(z_i,\psi(n)\big)$ with $1\leq i\leq N_{m,n}$ and obtain
	\begin{align*}
		\mu\Big(B\big(z_i,\psi(n)\big)\cap A_n\Big)
		\underset{\eqref{eq1}}\leq & \mu\Big(B\big(z_i,\psi(n)\big)\cap T^{-n} B\big(z_i,(1+p)\psi(n)\big)\Big)  \quad \begin{aligned}
		\end{aligned}\\
		\underset{\eqref{em}}\leq &\mu\Big(B\big(z_i,\psi(n)\big)\Big)\mu\Big(B\big(z_i,(1+p)\psi(n)\big)\Big)+a_n\mu\Big(B\big(z_i,(1+p)\psi(n)\big)\Big)\\
		\underset{\eqref{ar}}\lesssim & \left[\psi(n)^\delta+a_n\right] \psi(n)^\delta.
	\end{align*}
	Now summing over $1\leq i\leq N_{m,n}$, we have 
	\begin{align*}
		\mu(J^*\cap A_n)\leq &\sum_{i=1}^{N_{m,n}}\mu\Big(B\big(z_i,\psi(n)\big)\cap A_n\Big)\\
		\lesssim& \left(\frac{p\psi(m)}{K_{J}-p}\right)^\delta \left[\psi(n)^\delta+a_n\right]\\
		\lesssim & \ K_J^{-\delta}[\psi(m)^\delta\psi(n)^\delta+a_n\psi(m)^\delta ].
	\end{align*}
Combining the two cases we obtain the desired conclusion. \end{proof}

\begin{proposition}\label{globalqi}
	 {For $n>m>m_0$,}
	 \begin{equation*}
	 	\mu(A_m\cap A_n)\lesssim \psi (m)^\delta\psi(n)^\delta+a_{n-m}\psi(n)^\delta+a_n\psi(m)^\delta.
	 \end{equation*}
 \end{proposition}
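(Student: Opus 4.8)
The plan is to deduce Proposition \ref{globalqi} from the local estimate in Lemma \ref{localqilemma} by summing over all cylinders of order $m$. First I would observe that the collection $\mathcal{F}_m$ partitions $X$ up to a $\mu$-null set: since $\mu(X\ssm \cup_i X_i) = 0$ and the cylinders of order $m$ are intersections of preimages of the $X_i$ under $T^0,\dots,T^{m-1}$, measure preservation of $T$ gives $\mu\big(X\ssm \cup_{J\in\mathcal{F}_m}J\big) = 0$. Consequently
\begin{equation*}
\mu(A_m\cap A_n) = \sum_{J\in\mathcal{F}_m}\mu(J\cap A_m\cap A_n),
\end{equation*}
and the goal reduces to bounding this sum.

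Next I would apply Lemma \ref{localqilemma} term by term. For each $J\in\mathcal{F}_m$ we have
\begin{equation*}
\mu(J\cap A_m\cap A_n)\lesssim K_{J}^{-\delta}\left[\psi(m)^\delta\psi(n)^\delta+a_{n-m}\psi(n)^\delta+a_n\psi(m)^\delta\right],
\end{equation*}
with the implied constant independent of $J$, $m$ and $n$. The bracketed factor does not depend on $J$, so summing over $J\in\mathcal{F}_m$ pulls out the bracket and leaves $\sum_{J\in\mathcal{F}_m}K_{J}^{-\delta}$. By the expanding property \eqref{qi}, this sum is bounded above by a constant $C$ uniformly in $m$, which yields
\begin{equation*}
\mu(A_m\cap A_n)\lesssim \psi(m)^\delta\psi(n)^\delta+a_{n-m}\psi(n)^\delta+a_n\psi(m)^\delta,
\end{equation*}
exactly as claimed.

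There is essentially no serious obstacle here; the proposition is a bookkeeping consequence of the already-established local bound. The only points requiring a moment of care are: (i) making sure the cylinders of order $m$ really do cover $X$ up to measure zero and are pairwise disjoint, so that the measures add without overcounting (disjointness of the $X_i$ and injectivity arguments from \S1 handle this); and (ii) checking that the constant implicit in Lemma \ref{localqilemma} is genuinely uniform over $J$, which it is, since it only involves the constants $\eta_1,\eta_2,K_1,K_2,p$ and the summability constants, none depending on the particular cylinder. With \eqref{qi} providing the uniform bound on $\sum_{J\in\mathcal{F}_m}K_J^{-\delta}$, the estimate follows immediately.
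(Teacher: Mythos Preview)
Your proposal is correct and follows exactly the paper's own argument: sum the local estimate of Lemma \ref{localqilemma} over all $J\in\mathcal{F}_m$ and invoke \eqref{qi} to bound $\sum_{J\in\mathcal{F}_m}K_J^{-\delta}$ uniformly in $m$. Your additional remarks about the cylinders covering $X$ up to a null set and the uniformity of the implicit constant are accurate but more detail than the paper itself supplies.
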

 
 \begin{proof}
 	This follows directly from the previous lemma and \eqref{qi}, since 
 	\begin{align*}
 			\mu(A_m\cap A_n)\lesssim & \sum_{J\in \mathcal{F}_m}K_{J}^{-\delta}\left[\psi(m)^\delta\psi(n)^\delta+a_{n-m}\psi(n)^\delta+a_n\psi(m)^\delta\right]\\
 			\underset{\eqref{qi}}\lesssim & \psi(m)^\delta\psi(n)^\delta+a_{n-m}\psi(n)^\delta+a_n\psi(m)^\delta.
 	\end{align*}
 \end{proof}

\section{
Proof of Theorems \ref{thm2.4} and 
\ref{Thm2} }\label{pf}

To prove the divergence case of Theorem \ref{thm2.4}, let us recall
 \begin{lemma}[Chung--Erd\"{o}s inequality, \cite{CE}] \label{cei}
	Let $(X,\mu)$ be a probability space, and let $\{E_n\}_n$ be a sequence of events such that $\sum_{n=1}^\infty \mu(E_n)=\infty$. Then 
	\begin{equation*}
		\mu\left(\limsup_{n\to\infty} E_n\right)\geq \limsup_{N\to \infty} \frac{\left(\sum_{n=1}^N \mu(E_n)\right)^2}{\sum_{n,m=1}^N \mu(E_n\cap E_m)}. 
	\end{equation*}
\end{lemma}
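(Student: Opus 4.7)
My plan is to attack this by a second-moment argument on finite initial segments of the sequence $\{E_n\}$, followed by a tail-truncation to pass from finite unions to the limsup. For each $N$ I would introduce the counting random variable $S_N := \sum_{n=1}^N \mathbf{1}_{E_n}$, whose support is exactly $\bigcup_{n=1}^N E_n$. Since $S_N = S_N \cdot \mathbf{1}_{\{S_N \ge 1\}}$ pointwise, the Cauchy--Schwarz inequality applied to this product yields
\[
\Bigl(\int S_N \, d\mu\Bigr)^2 \;\le\; \mu(S_N\ge 1)\,\int S_N^2 \, d\mu.
\]
Expanding the two moments as $\int S_N\, d\mu = \sum_{n=1}^N \mu(E_n)$ and $\int S_N^2\, d\mu = \sum_{n,m=1}^N \mu(E_n\cap E_m)$ immediately produces the desired lower bound on $\mu\bigl(\bigcup_{n=1}^N E_n\bigr)$.

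The second stage is to upgrade this finite-union estimate to a bound on $\mu(\limsup E_n)$, which is not the same as $\mu\bigl(\bigcup_{n=1}^\infty E_n\bigr)$. The same Cauchy--Schwarz argument applied to a tail gives, for every $M\le N$,
\[
\mu\Bigl(\bigcup_{n=M}^N E_n\Bigr) \;\ge\; \frac{\bigl(\sum_{n=M}^N\mu(E_n)\bigr)^2}{\sum_{n,m=M}^N\mu(E_n\cap E_m)}.
\]
Letting $N\to\infty$ turns the left-hand side into $\mu\bigl(\bigcup_{n\ge M}E_n\bigr)$, and then $M\to\infty$ converts this into $\mu(\limsup E_n)$ by continuity of measure from above. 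The task therefore reduces to showing that, for each fixed $M$, the $\limsup$ in $N$ of the right-hand side already equals $\limsup_N \bigl(\sum_{n=1}^N\mu(E_n)\bigr)^2\big/\sum_{n,m=1}^N\mu(E_n\cap E_m)$.

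This last step is where the divergence hypothesis is genuinely used, and I expect it to be the only subtle point. Writing $\Sigma_N := \sum_{n=1}^N\mu(E_n)$ and $T_N := \sum_{n,m=1}^N \mu(E_n\cap E_m)$, the assumption $\sum_n\mu(E_n)=\infty$ forces $\Sigma_N\to\infty$, so for any fixed $M$ the tail sum $\Sigma_N - \Sigma_{M-1}$ is asymptotic to $\Sigma_N$; meanwhile the tail denominator is trivially bounded above by $T_N$. Hence the tail ratio dominates $(\Sigma_N - \Sigma_{M-1})^2/T_N$, whose $\limsup$ in $N$ coincides with $\limsup_N \Sigma_N^2/T_N$. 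Combining this with the previous paragraph gives $\mu\bigl(\bigcup_{n\ge M}E_n\bigr)\ge \limsup_N \Sigma_N^2/T_N$ for every $M$, and letting $M\to\infty$ completes the proof. The main (though still mild) obstacle is thus the verification that restricting the indexing range from $\{1,\dots,N\}$ to $\{M,\dots,N\}$ costs nothing in the limit; everything else is a direct second-moment calculation.
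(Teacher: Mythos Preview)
The paper does not supply its own proof of this lemma; it simply states the Chung--Erd\H{o}s inequality and cites \cite{CE}. Your proposed argument is correct: the Cauchy--Schwarz step on $S_N=\sum_{n\le N}\mathbf{1}_{E_n}$ gives the finite-union bound, and the tail-truncation (using $\Sigma_N\to\infty$ to absorb the fixed initial segment and monotonicity from above to pass from $\bigcup_{n\ge M}E_n$ to $\limsup E_n$) is exactly the right way to upgrade it. The only delicate point you flagged---that dropping indices $1,\dots,M-1$ costs nothing in the $\limsup$---is handled correctly by your observation that the tail denominator is at most $T_N$ while the tail numerator is asymptotic to $\Sigma_N^2$.

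For what it's worth, the LaTeX source contains a commented-out alternative proof (inside an \verb|\ignore{...}| block) that proceeds via the Paley--Zygmund inequality: it observes that $\limsup_N\{f_N\ge\theta\,\mathbb{E}[f_N]\}\subset\limsup_n E_n$ (since $\mathbb{E}[f_N]\to\infty$), applies reverse Fatou together with Paley--Zygmund to get the bound with an extra factor $(1-\theta)^2$, and then sends $\theta\to 0$. That route trades your tail-truncation step for the reverse-Fatou/Paley--Zygmund combination; both are equally short and standard, and neither appears in the compiled paper.
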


The next lemma is based on  the above inequality.

\begin{lemma}\label{posmeas}
	Let 
	$(X,\mu)$ be a probability space. Let $(E_k)_k$ be a sequence of measurable subsets of $X$, and let $(a_n)_n,(b_n)_n$ be sequences of positive 
	numbers such that $ \sum_{n=1}^\infty a_n < \infty$ and $\sum_{n=1}^\infty b_n = \infty$.
	Assume that for some 
	$s_1,s_2,s_3>0$ 
	it holds that
	$$
		  s_1(b_n-a_n)\leq \mu(E_n) \leq  s_3(b_n+a_n)
		\quad \text{for all }n \in\N$$
		and $$
		\mu(E_m\cap E_n) \leq  s_2 (b_mb_n+a_{n-m}b_n+a_n b_m) \quad \text{for all }  m<n.
	$$
	Then $\mu(\limsup_{n\to\infty}E_n)\geq 
	\frac{s_1^2}{2s_2}$.\end{lemma}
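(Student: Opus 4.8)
The plan is to feed the sequence $\{E_n\}$ into the Chung--Erd\"{o}s inequality (Lemma \ref{cei}) and bound its two sides using the three hypotheses. The mechanism making everything work is that summability of $(a_n)$ forces every term carrying a factor $a_k$ to contribute only linearly in $S_N:=\sum_{n=1}^N b_n$, hence negligibly against $S_N^2$. Throughout write $\sigma:=\sum_{n=1}^\infty a_n<\infty$.

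First I would verify the hypothesis of Lemma \ref{cei}. From $\mu(E_n)\ge s_1(b_n-a_n)$ one gets $\sum_{n=1}^N\mu(E_n)\ge s_1(S_N-\sigma_N)\ge s_1(S_N-\sigma)$, where $\sigma_N:=\sum_{n=1}^N a_n$; since the $b_n$ are positive and $\sum b_n$ diverges we have $S_N\to\infty$, so $\sum_n\mu(E_n)=\infty$ and Lemma \ref{cei} applies. Moreover this already bounds the numerator in that inequality below by $s_1^2(S_N-\sigma)^2$ for all $N$ with $S_N>\sigma$.

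Next I would estimate the denominator $D_N:=\sum_{m,n=1}^N\mu(E_m\cap E_n)$. Peeling off the diagonal and invoking $\mu(E_n)\le s_3(b_n+a_n)$ together with the quasi-independence hypothesis gives
\begin{equation*}
D_N\ \le\ s_3(S_N+\sigma)\ +\ 2s_2\sum_{1\le m<n\le N}\bigl(b_mb_n+a_{n-m}b_n+a_nb_m\bigr).
\end{equation*}
Here $2\sum_{m<n\le N}b_mb_n\le S_N^2$; for the two cross-terms, $\sum_{m<n\le N}a_{n-m}b_n=\sum_{n=1}^N b_n\sum_{k=1}^{n-1}a_k\le\sigma S_N$ and likewise $\sum_{m<n\le N}a_nb_m=\sum_{n=1}^N a_n\sum_{m=1}^{n-1}b_m\le\sigma S_N$. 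Hence $D_N\le s_2S_N^2+(4s_2\sigma+s_3)S_N+s_3\sigma$, that is $D_N=s_2S_N^2+O(S_N)$.

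Substituting the two bounds into Lemma \ref{cei} and letting $N\to\infty$ (so $S_N\to\infty$) yields
\begin{equation*}
\mu\Bigl(\limsup_{n\to\infty}E_n\Bigr)\ \ge\ \limsup_{N\to\infty}\frac{s_1^2(S_N-\sigma)^2}{s_2S_N^2+(4s_2\sigma+s_3)S_N+s_3\sigma}\ =\ \frac{s_1^2}{s_2}\ \ge\ \frac{s_1^2}{2s_2}.
\end{equation*}
The argument is pure bookkeeping; I expect the only point requiring care to be confirming that the two cross-sums are genuinely $O(S_N)$, which is exactly where both the convergence of $\sum a_n$ and the particular shape $a_{n-m}b_n+a_nb_m$ of the error term are used. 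I foresee no real obstacle — in fact the computation delivers the slightly stronger constant $s_1^2/s_2$, the factor $\tfrac12$ in the statement being only a convenient safety margin.
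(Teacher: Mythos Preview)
Your proof is correct and follows essentially the same approach as the paper: apply the Chung--Erd\H{o}s inequality and bound numerator and denominator by exploiting that all $a$-weighted sums are $O(S_N)$. The only difference is that you use the sharp inequality $2\sum_{m<n\le N}b_mb_n\le S_N^2$, whereas the paper uses the cruder bound $\sum_{m<n\le N}b_mb_n\le S_N^2$; this is why you recover the stronger constant $s_1^2/s_2$ while the paper obtains $s_1^2/(2s_2)$.
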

	
	\begin{proof}
		Let us denote $\sum_{k=1}^\infty a_k=S$. 
		Choose $\varepsilon > 0$. On the one hand, 
	$$	\sum_{n=1}^N \mu(E_n)
		\geq 
		 s_1\sum_{n=1}^N (b_n-a_n)
		\geq 
		 s_1\left(\sum_{n=1}^N b_n-S\right)
		\\
		\geq 
		 (s_1 -\varepsilon)\sum_{n=1}^N b_n
	$$	
	when $N$ is sufficiently large,  because $\sum_{n=1}^\infty  b_n=\infty$.
	On the other hand, 
	\begin{align*}
		\sum_{m,n=1}^N \mu(E_m\cap E_n) &=\sum_{n=1}^N \mu(E_n)+
		2\sum_{1\le m<n\leq N}\mu(E_m\cap E_n)\\
		&\leq   s_3\sum_{n=1}^N (b_n+a_n)+
		2 s_2\sum_{m=1}^N\sum_{n=m+1}^N(b_mb_n+a_{n-m}b_n+a_n b_m)\\
		&\leq    s_3\sum_{n=1}^N b_n+s_3S+
		2s_2\sum_{m=1}^N\sum_{n=m+1}^Nb_mb_n+2s_2S\sum_{n=1}^nb_n+2s_2S\sum_{m=1}^n b_m\\
		&\leq   2s_2\left(\sum_{n=1}^N b_n\right)^2 + (s_3+4s_2S)\sum_{n=1}^N b_n+s_3S\leq   (2s_2 + \varepsilon) \left(\sum_{n=1}^N b_n\right)^2  
	\end{align*}
		when $N$ is sufficiently large,  again because $\sum_{n=1}^\infty  b_n=\infty$.
	Hence by Lemma~\ref{cei}
$$		\mu\left(\limsup_{n\to\infty} E_n\right)\geq \limsup_{N\to \infty} \frac{\left((s_1 -\varepsilon)\sum_{n=1}^N b_n\right)^2}{ (2s_2 + \varepsilon) \left(\sum_{n=1}^N b_n\right)^2} = 
\frac{(s_1 -\varepsilon)^2}{ 2s_2 + \varepsilon},$$
and, since $\varepsilon$ was arbitrary, the conclusion follows.\end{proof}
	
	\smallskip

\begin{proof}[Proof of Theorem \ref{thm2.4}, the divergence part]
	Take $E_n=A_{m_0+n}$, where $m_0$ is chosen prior to Lemma \ref{lem3.6}. By Lemma \ref{lemma4}, there exist 
	$s_1,s_3> 0$ 
	such that
	\begin{equation*}
		s_1\big(\psi(m_0+n)^\delta-a_{m_0+n}\big) \leq \mu(E_n) \leq	s_3\big(\psi(m_0+n)^\delta+a_{m_0+n}\big)
	\end{equation*}
	for any $n\in\N$. Also by Proposition  
\ref{globalqi} 
there exists 
$s_2>0$ such that $$\mu(E_m\cap E_n)\leq s_2\big(\psi^{\delta}(m_0+m)\psi^{\delta}(m_0+n)+a_{n-m}\psi^{\delta}(m_0+n)+ a_{m_0+n} \psi^{\delta}(m_0+m)\big)$$ for $n>m$. 
By taking $b_n=\psi(m_0+n)^\delta$,
the above lemma implies that $\mu\big(R_T^f(\psi)\big)=\mu(\limsup_{n\to\infty} E_n)\ge\frac{s_1^2}{2s_2} >0$.
\end{proof}

We conclude the section with the proof of Theorem \ref{Thm2}, that is, a passage from positive measure to full measure under the assumption that $f$ and $T$ commute.
This proof is adapted from \cite{hussain}.

\begin{proof}[Proof of Theorem \ref{Thm2}]
Suppose that \equ{comm} holds.
	 Consider the set 
\begin{equation*}
R'(\psi):=\left\{x\in X: \liminf_{n\to\infty}\psi(n)^{-1}d\big(T^nx,f(x)\big)<\infty\right\}
\end{equation*}
Take a point $x\in R'(\psi)\cap f^{-1}X_i$. By definition, there exist $c(x)>0$ and $\{n_k\}_k\subset \mathbb{N}$ so that 
\begin{equation*}
	\psi(n_k)^{-1}d\big(T^{n_k}x,f(x)\big)<c(x)\quad  \forall \, k\geq 1
\end{equation*} Let $s(x)$ be a positive real number such that $B\big(f(x),s(x)\big)\subset X_i$. Since \\ $\psi(n)\to 0$, take $N\in \mathbb{N}$ such that for all $k\geq N$, 
\begin{equation*}
	c(x)\psi(n_k)<s(x)
\end{equation*}
Then $d\big(T^{n_k}x,f(x)\big)<c(x)\psi(n_k)<s(x)$, therefore $T^{n_k}x\in X_i$ for all $k\geq N$. Hence for all $k\geq N$, 
\begin{align*}
	d\Big(T^{n_k}(Tx),\big(f(Tx)\big)\Big)\underset{\equ{comm}}=d\Big(T^{n_k}(Tx),T\big(f(x)\big)\Big)=d\Big(T(T^{n_k}x),T\big(f(x)\big)\Big)\\\leq  d\big(T^{n_k}x,f(x)\big)<  c(x)\psi(n_k).
\end{align*}
This implies $R'(\psi)\cap (\bigcup_i f^{-1}X_i)\subset T^{-1}R'(\psi)$, thus $\mu\big(R'(\psi)\smallsetminus T^{-1}R'(\psi)\big)=0$. 
But  $R_T^f(\psi)\subset R'(\psi)$, hence $\mu\big(R'(\psi)\big)>0$, and by the ergodicity of $T$, $\mu\big(R'(\psi)\big)=1$.  

 Now we show that $\mu\big(R_T^f(\psi)\big)=1$. Take a sequence of positive numbers $\{\ell(n):n\geq 1\}$ such that 
 \begin{equation*}
 	\sum_{n=1}^\infty  \frac{\psi(n)}{\ell(n)} =\infty \quad \text{and}\quad \lim_{n\to\infty}\ell(n)=\infty.
 \end{equation*} 
Consider $\widetilde{\psi}(n):=\psi(n) /\ell(n)$; then $R'(\widetilde{\psi})$ has full measure, i.e.  for $\mu$-almost every $x\in X$, 
\begin{equation*}
	\liminf_{n\to\infty}\widetilde{\psi}(n)^{-1}d\big(T^nx,f(x)\big)<\infty.
\end{equation*}
By Egorov's theorem, for any $\varepsilon>0$  there exists $M>0$ such that the set 
\begin{equation*}
	R_M:=\left\{x\in X:\frac{\ell(n) }{\psi(n) }d\big(T^nx,f(x)\big)<M \text{ for infinitely many } n\in \mathbb{N}\right\}
\end{equation*} is of measure at least $1-\varepsilon$. Then $R_M\subset R_T^f(\psi)$, by letting $\ell(n)\to \infty$. Since $\varepsilon$ is arbitrary, it 
implies $\mu\big(R_T^f(\psi)\big)=1$. 
\end{proof}

\section{
Proof of Theorem \ref{thm3}}

{We first prove a local version of Lemma \ref{lemma4}; i.e., fix a ball $B$ with sufficiently small radius  and   estimate $\mu(A_n\cap B)$ for $n$ sufficiently large. }

{\begin{lemma}\ignore{[Local version of Lemma \ref{lemma4}]}\label{local4}
	For any $r < r_0/2$ there exists $n_r\in \mathbb{N}$ such that for any open ball $B=B(x,r)$ in $X$ 
	and  all $n>n_r$,
	\begin{align*}
		\frac{\eta_1}{ \eta_2 20^\delta}\mu(B)\left[\frac{\eta_1^2}{\eta_2}\psi(n)^\delta-(2p)^\delta a_n\right]  \leq \mu(A_n\cap B)  \\
		\leq  \frac{\eta_2^23^\delta}{\eta_1^2  } \mu(B)
		\left[\eta_25^\delta\psi(n)^\delta+  a_n(2p)^\delta\right]
	\end{align*}
\end{lemma}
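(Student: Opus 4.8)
The plan is to mimic the proof of Lemma \ref{lemma4} but to restrict the Vitali covering argument to the ball $B = B(x,r)$ instead of covering all of $X$. Fix $r < r_0/2$ and an open ball $B = B(x,r)$. First I would choose $n_r$ large enough that $\psi(n)/2p < r$ and $5\psi(n)/2p < r_0$ for all $n > n_r$; this guarantees that the small balls used below fit inside $X$ in the regime where Ahlfors regularity \eqref{ar} applies and also that they are genuinely small compared to $B$. Following the argument of Lemma \ref{lemma4}, for each $y \in B$ with $y \in f^{-1}\{x'\}$ one has $B\big(y,\psi(n)/p\big) \subset f^{-1}B\big(x',\psi(n)/2\big)$ by the $p$-Lipschitz property, so I would apply Vitali's $5r$-covering lemma to the family $\{B(y,\psi(n)/2p) : y \in B\}$ to extract a disjoint subfamily $\{B(y_j,\psi(n)/2p)\}_{j \in \mathcal{J}}$ with $B \subset \bigcup_{j} B(y_j, 5\psi(n)/2p)$.

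The only genuinely new point compared to Lemma \ref{lemma4} is the two-sided estimate on $|\mathcal{J}|$ in terms of $\mu(B)$ rather than $\mu(X)=1$. Disjointness and \eqref{ar} give $\sum_{j} \eta_1(\psi(n)/2p)^\delta \le \sum_j \mu\big(B(y_j,\psi(n)/2p)\big)$; since each $B(y_j,\psi(n)/2p)$ meets $B$ and has radius $< r$, it is contained in $B(x,2r) = 2B$, so the left side is $\le \mu(2B) \lesssim \mu(B)$ by Ahlfors regularity (the doubling property), yielding $|\mathcal{J}| \lesssim \mu(B)(2p/\psi(n))^\delta$ with explicit constant $\eta_2 2^\delta/\eta_1$. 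For the lower bound on $|\mathcal{J}|$, the covering $B \subset \bigcup_j B(y_j,5\psi(n)/2p)$ together with \eqref{ar} gives $\mu(B) \le \sum_j \eta_2 (5\psi(n)/2p)^\delta = \eta_2 |\mathcal{J}| (5\psi(n)/2p)^\delta$, hence $|\mathcal{J}| \ge \eta_2^{-1}\mu(B)(2p/5\psi(n))^\delta$. Then, exactly as in Lemma \ref{lemma4}, I would apply Lemma \ref{lemma3} to each ball $B(y_j,5\psi(n)/2p) \subset f^{-1}B(x_j',5\psi(n)/2)$ (for the upper bound on $\mu(A_n \cap B)$, using $A_n \cap B \subset \bigcup_j (B(y_j,5\psi(n)/2p) \cap A_n)$) and to each $B(y_j,\psi(n)/2p) \subset f^{-1}B(x_j',\psi(n)/2)$ (for the lower bound, using disjointness and $\bigcup_j (B(y_j,\psi(n)/2p)\cap A_n) \subset A_n \cap B$ — here I need the $B(y_j,\psi(n)/2p)$ to actually lie inside $B$, which I should arrange by shrinking: replace $B$ by requiring $B(y_j, \psi(n)/2p) \subset B$, or equivalently only keep centers $y_j$ with $d(x,y_j) < r - \psi(n)/2p$, absorbing the resulting loss of mass into the constants via doubling). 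Substituting the bounds on $|\mathcal{J}|$ and multiplying through by $\psi(n)^\delta$ produces the claimed inequalities, with the measure of $B$ appearing linearly and all constants expressed in $\eta_1,\eta_2,\delta,p$.

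I expect the main obstacle to be bookkeeping with the constants and, more substantively, the containment issue in the lower bound: unlike in the global case where $\bigcup_j B(y_j,\psi(n)/2p) \subset X$ automatically, here one must ensure the disjoint small balls sit inside $B$ so that their $A_n$-traces sum to at most $\mu(A_n \cap B)$. The clean fix is to run Vitali on $\{B(y,\psi(n)/2p) : y \in B,\ B(y, \psi(n)/2p) \subset B\}$, which still covers all but a boundary collar of $B$ of measure $\lesssim \mu(B) \cdot (\psi(n)/r)$-ish; but since we only need a lower bound of the stated form and $\psi(n) \to 0$, for $n$ large this collar is negligible and the counting lower bound $|\mathcal{J}| \gtrsim \mu(B)(\psi(n))^{-\delta}$ survives with a possibly worse absolute constant, which is harmless. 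Everything else is a routine transcription of the proof of Lemma \ref{lemma4}.
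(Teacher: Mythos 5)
Your proposal is correct and follows essentially the same route as the paper's proof: the same Vitali covering of $B$ by balls of radius $\psi(n)/2p$, the same doubling/Ahlfors-regularity estimates to bound $|\mathcal{J}|$ two-sidedly in terms of $\mu(B)\psi(n)^{-\delta}$, and the same application of Lemma \ref{lemma3} ball by ball. You also correctly identified the one genuine subtlety — ensuring the disjoint small balls in the lower-bound count actually lie inside $B$ — and your fix (restrict to centers within $r-\psi(n)/2p$ of $x$, absorbing the lost collar into constants) is functionally the same as the paper's, which keeps only those $j$ whose $5$-dilate meets $B(x,r/2)$ after choosing $n_r$ so that $5\psi(n)/p<r/2$.
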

\begin{proof} 
Let $n_r\in \mathbb{N}$ be such that 
\eq{ballsinside}{\frac{5\psi(n)}{p}<\frac{r}{2}} for all $n>n_r$.
	As in the proof of Lemma \ref{lemma4}, we have an open covering 
	\begin{equation*}
		\left\{B\Big(y, \frac{\psi(n)}{2p}\Big):y\in B\right\}
	\end{equation*}
	of $B$, with each $B\Big(y, \frac{\psi(n)}{2p}\Big)\subset f^{-1}B\big(x,  \psi(n)/2\big)$ for some $x\in X$. 
	Again by Vitali's Covering Theorem, 
	we can find countable sub-collection of disjoint balls $\left\{B\big(y_j, \psi(n)/2p\big)\right\}_{j\in \mathcal{J}}$ such that 
	\begin{equation} 
		B\subset \bigcup_{j\in \mathcal{J}}B\big(y_j, 5\psi(n)/2p\big). \label{cover5r}
	\end{equation} Then we have 
	\begin{equation*}
		\sum_{\substack{j\in \mathcal{J}}} \eta_1\big( \psi(n)/2p\big)^\delta \leq \sum_{\substack{j\in \mathcal{J}}} \mu\Big(B\big(y_j, \psi(n)/2p\big)\Big)\underset{\equ{ballsinside}}\leq \mu\big(B(x,2r)\big)\leq 2^\delta \frac{\eta_2}{\eta_1} \mu(B),
	\end{equation*}
	{hence \begin{equation}\label{upperbound}
	|\mathcal{J}|\leq \frac{\eta_2(4p)^\delta \mu(B)}{\eta_1^2  \psi(n)^\delta}.
	\end{equation} } On the other hand, 
	\begin{align*}
		\sum_{\substack{j\in \mathcal{J}\\ B\big(y, \frac{5\psi(n)}{2p}\big)\cap B(x,r/2)\neq \varnothing}} \eta_2 \big(5\psi(n)/2p\big)^\delta 
		\geq \ & \sum_{\substack{j\in \mathcal{J}\\ B\big(y, \frac{5\psi(n)}{2p}\big)\cap B(x,r/2)\neq \varnothing}} \mu\Big(B\big(y_j,5\psi(n)/2p\big)\Big)\\
		\underset{\eqref{cover5r}}\geq &\   \mu\big(B(x,{r}/{2})\big)
		\geq 
		 \frac{\eta_1}{2^\delta\eta_2} \mu(B),
	\end{align*}
	therefore 
	\begin{equation}\label{lowerbound}
	\begin{aligned}
		&\left|\left\{j\in \mathcal{J}:B\Big(y_j,\frac{\psi(n)}{2p}\Big)\subset B\right\}\right|\\
		\underset{\equ{ballsinside}}\geq \ &\left|\left\{j\in \mathcal{J}:B\Big(y_j,\frac{\psi(n)}{2p}\Big)\cap B(x,r/2)\neq \varnothing \right\}\right|
		\geq  \ \frac{\eta_1p^\delta\mu(B)}{ \eta_2^2 \big(5\psi(n)\big)^\delta}.
	\end{aligned}\end{equation} 
	Since for each $j\in \mathcal{J}$ we have $B \big(y_j,\psi(n)/2p \big)\subset f^{-1}B \big(x_j,\psi(n)/2 \big)$ and 
	 $B \big(y_j,5\psi(n)/2p \big)\subset f^{-1}B \big(x_j,5\psi(n)/2 \big)$,  similarly to the proof  of Lemma \ref{lemma4} we can write
	 \begin{align*}
		\mu(A_n\cap B) \leq & \sum_{j\in \mathcal{J}}\mu\big(B(y_j,5\psi(n)/2p)\cap A_n\big)\\
		\underset{\text{Lemma \ref{lemma3}}}\leq & \ \sum_{j\in \mathcal{J}}\
		{\eta_2(3/2)^\delta} \left[\mu\Big(B\big(y_j,5\psi(n)/2p\big)\Big)+ a_n\right]\psi(n)^\delta\\
		\underset{\eqref{upperbound},\,\eqref{ar}}\leq  \, &\  \frac{\eta_2(4p)^\delta \mu(B)}{\eta_1^2  \psi(n)^\delta} {\eta_2(3/2)^\delta}\left[\eta_2\big(5\psi(n)/2p\big)^\delta\psi(n)^\delta+ a_n\psi(n)^\delta\right] \\
		{=}  \ \  \  \ &\  \frac{\eta_2^23^\delta}{\eta_1^2  } \mu(B)
		\left[\eta_25^\delta\psi(n)^\delta+  a_n(2p)^\delta\right]
	\end{align*}
	and 
	\begin{align*}
		\mu(A_n\cap B)\geq & \sum_{\substack{j\in \mathcal{J}\\B(y_j,\psi(n)/2p)\subset B}}\mu\big(B(y_j, \psi(n)/2p)\cap A_n\big) \\
		\underset{\text{Lemma }\ref{lemma3}}\geq & \sum_{\substack{j\in \mathcal{J}\\B(y_j,\psi(n)/2p)\subset B}}
		2^{-\delta}\left[\eta_1\mu\big(B(y_j,\psi(n)/2p)\big)-\eta_2a_n\right] \psi(n)^\delta
		\\
	\underset{\eqref{lowerbound},\,\eqref{ar}}\geq &\  
	\frac{\eta_1p^\delta\mu(B)}{ \eta_2^2 \big(5\psi(n)\big)^\delta}
	\ 2^{-\delta}\left[\eta_1^2\big(\psi(n)/2p)\big)^\delta-\eta_2a_n\right] \psi(n)^\delta \\
		=\ \ \ \  &\frac{\eta_1}{ \eta_2 20^\delta}\mu(B)\left[\frac{\eta_1^2}{\eta_2}\psi(n)^\delta-(2p)^\delta a_n\right], 	\end{align*}	
finishing the proof. \end{proof}
\begin{lemma}\label{lemma3.7}
	Let $m>m_0$, and let $J$ be a cylinder in $\mathcal{F}_m$. Then \linebreak $\diameter(J)\leq K_1\diameter(X)K_{J}^{-1}$.
\end{lemma}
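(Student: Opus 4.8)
The plan is to reduce this to the defining property of $K_J$ together with the trivial bound $d(T^m x, T^m y)\le \diameter(X)$. First, I would note that since $m>m_0$, condition \eqref{toinfty} (and the choice of $m_0$ made just before Lemma \ref{lem3.6}) guarantees $K_J>0$, so that $K_J^{-1}$ is finite. Then, for any $x,y\in J$ with $x\ne y$, the definition $K_J=\inf_{u,v\in J,\,u\ne v}\tfrac{d(T^m u,T^m v)}{d(u,v)}$ immediately gives
$$ d(x,y)\;\le\; K_J^{-1}\,d(T^m x,T^m y); $$
this is exactly the inequality already used (and attributed to \eqref{eq10}) inside the proof of Lemma \ref{lem3.6}.

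Next I would bound the right-hand side by observing that $T^m x,T^m y\in X$, hence $d(T^m x,T^m y)\le\diameter(X)$, which yields $d(x,y)\le K_J^{-1}\diameter(X)$ for all $x,y\in J$; taking the supremum over $x,y\in J$ gives $\diameter(J)\le K_J^{-1}\diameter(X)$. Finally, to land on the stated form of the bound, I would observe that $K_1\ge 1$: putting $z=y$ in \eqref{eq10} makes the middle ratio equal to $1$, forcing $K_1^{-1}\le 1\le K_1$. Combining, $\diameter(J)\le K_J^{-1}\diameter(X)\le K_1 K_J^{-1}\diameter(X)$, as claimed.

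I do not expect any genuine obstacle here — the estimate is immediate from the definition of $K_J$ and the compactness (finite diameter) of $X$. The constant $K_1$ in the statement is not really needed (the sharper constant $1$ already works) and is presumably kept only for notational uniformity with the threshold defining $m_0$, which is phrased via $K_1\diameter(X)/r_0$. An alternative route through \eqref{eq10} with a fixed reference point in $J$ is possible but strictly more cumbersome, so I would not pursue it.
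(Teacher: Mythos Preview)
Your proof is correct and follows essentially the same route as the paper: bound $d(x,y)$ via $K_J^{-1}d(T^mx,T^my)$ and then use $d(T^mx,T^my)\le\diameter(X)$. The only cosmetic difference is that the paper invokes \eqref{eq10} together with the definition of $K_J$ to obtain $d(x,y)\le K_1K_J^{-1}d(T^mx,T^my)$ directly, whereas you use the definition of $K_J$ alone (yielding the sharper constant $1$) and then observe $K_1\ge1$ by setting $z=y$ in \eqref{eq10}; both lead to the same bound.
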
}

\begin{proof}
	Let $x,y\in J$. By \eqref{eq10} and the definition of $K_J$,
	\begin{equation*}
		 d(x,y)\leq \frac{K_1d(T^mx,T^my)}{K_{J}},
	\end{equation*}
	hence by \eqref{eq10}
	\begin{equation*}
		 d(x,y) \leq K_1\diameter (X)K_{J}^{-1},
	\end{equation*} 
	which implies the needed upper bound on $\diameter (J)$.
	\end{proof}


{For the rest of the section, let us assume} that $\{X_i\}_{i\in \mathcal{I}}$ is pseudo-Markov. 
Let $0<\tau<1$ be such that $\mu(TX_i)\geq \tau\mu(X)$ for all $i\in \mathcal{I}$.


\begin{lemma}\label{equality} 
	For all $n\in \mathbb{N}$ and for a nonempty cylinder $$J=X_{i_0}\cap T^{-1}X_{i_1}\cap \cdots \cap T^{-n+1}X_{i_n}\in \mathcal{F}_n,$$ 
	one has $T^nJ= TX_{i_n}$. 
\end{lemma}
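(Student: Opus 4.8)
The plan is to prove the identity by induction on $n$, the only non-formal input being the Markov-type containment property from the definition of a pseudo-Markov partition. The base case $n=1$ is immediate: then $J=X_{i_1}$ (in the notation of \eqref{cylinders}) and the claimed equality $TJ=TX_{i_1}$ holds by definition.

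For the inductive step, suppose the statement holds for cylinders of order $n-1$, and let $J=X_{i_1}\cap T^{-1}X_{i_2}\cap\cdots\cap T^{-(n-1)}X_{i_n}\in\mathcal F_n$ be nonempty. I would split off the first symbol, writing $J=X_{i_1}\cap T^{-1}J'$, where $J':=X_{i_2}\cap T^{-1}X_{i_3}\cap\cdots\cap T^{-(n-2)}X_{i_n}\in\mathcal F_{n-1}$ is the cylinder obtained by deleting $i_1$. Pick $x\in J$. Then $Tx\in J'$, so $J'$ is nonempty and the inductive hypothesis gives $T^{n-1}J'=TX_{i_n}$. Also $Tx\in X_{i_2}$ and $Tx\in TX_{i_1}$, so $TX_{i_1}\cap X_{i_2}\neq\varnothing$; by the Markov condition in the definition of pseudo-Markov this forces $X_{i_2}\subset TX_{i_1}$.

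To conclude, I would use the elementary set identity $T(A\cap T^{-1}B)=TA\cap B$, valid for arbitrary subsets $A,B\subset X$ (the inclusion ``$\supset$'' being the only one needing a word of justification: if $y\in TA\cap B$, pick $x\in A$ with $Tx=y$; then $Tx=y\in B$, so $x\in T^{-1}B$ and $y=Tx\in T(A\cap T^{-1}B)$). Applying this with $A=X_{i_1}$ and $B=J'$ yields $TJ=TX_{i_1}\cap J'$. But every point of $J'$ lies in $X_{i_2}\subset TX_{i_1}$, so $J'\subset TX_{i_1}$ and hence $TX_{i_1}\cap J'=J'$. Thus $TJ=J'$, and therefore $T^nJ=T^{n-1}(TJ)=T^{n-1}J'=TX_{i_n}$, completing the induction.

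I do not expect a genuine obstacle here: the argument is purely set-theoretic once the Markov containment is in hand. The one point requiring care is verifying that the Markov condition is applicable, i.e.\ that consecutive symbols of a nonempty cylinder are ``admissible'' in the sense that $TX_{i_1}\cap X_{i_2}\neq\varnothing$ — this is precisely where the nonemptiness hypothesis on $J$ is used. It is also worth remarking that only the combinatorial containment axiom of the pseudo-Markov definition enters this lemma; neither the measurability of $TX_i$ nor the uniform lower bound $\mu(TX_i)\ge\tau\mu(X)$ plays any role here.
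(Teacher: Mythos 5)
Your proof is correct, and the key ingredients are the same as the paper's (the Markov containment $X_{i_{j+1}}\subset TX_{i_j}$ for consecutive symbols of a nonempty cylinder, plus the fact that $J\subset T^{-(n-1)}X_{i_n}$ forces the forward image to lie inside $X_{i_n}$), but you organize them differently. The paper argues directly: it first extracts $TX_{i_j}\supset X_{i_{j+1}}$ for every $j$, then starting from an arbitrary $x\in X_{i_n}$ constructs a full chain of preimages $x_{n-1},x_{n-2},\dots$ landing in $J$, giving $T^{n-1}J\supset X_{i_n}$; the reverse inclusion $T^{n-1}J\subset X_{i_n}$ is immediate. You instead induct on $n$, stripping off the first symbol to write $J=X_{i_1}\cap T^{-1}J'$ and invoking the identity $T(A\cap T^{-1}B)=TA\cap B$ to reduce $T^nJ$ to $T^{n-1}J'$; the chain of preimages is thereby hidden in the induction. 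Your version is a touch slicker (one Markov containment per step, no explicit backward construction), while the paper's one-pass argument avoids appealing to the set identity. Both are valid, and your closing remark is right that only the containment axiom of the pseudo-Markov definition enters; the measurability of $TX_i$ and the lower bound $\mu(TX_i)\ge\tau\mu(X)$ are not used here (they appear in the subsequent Lemma \ref{lemma4.3}).

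One small notational point to be aware of: the statement as printed writes $J=X_{i_0}\cap T^{-1}X_{i_1}\cap\cdots\cap T^{-n+1}X_{i_n}$, which has $n+1$ symbols $i_0,\dots,i_n$ but only $n$ intersection factors, so it is internally inconsistent; you correctly read it as the $\mathcal F_n$-cylinder $X_{i_1}\cap T^{-1}X_{i_2}\cap\cdots\cap T^{-(n-1)}X_{i_n}$ in the notation of \eqref{cylinders}, which is what the paper's own proof also uses once you untangle its indexing.
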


\begin{proof}
	 For each  $0\leq j<n$, we have
	$T^{-j+1}X_{i_j}\cap T^{-j}X_{i_{j+1}}\neq \varnothing$, and then $TX_{i_j}\cap X_{i_{j+1}}\neq \varnothing$, so by the pseudo-Markov condition, $TX_{i_j}\supset X_{i_{j+1}}$. Now let $x\in X_{i_n}$, then since $TX_{i_{n-1}}\supset X_{i_n}$, there exists some $x_{n-1}\in X_{i_{n-1}}$ so that $Tx_{n-1}=x$. Similarly, there exists some $x_{n-2}\in X_{i_{n-2}}$ such that $Tx_{n-2}=x_{n-1}$. Continue this process until we find such $x_0\in X_{i_0}$. Then $T^{j-1}x_0\in X_{i_j}$ for each $0<j<n$, so in particular $x_0\in J$ and $T^{n-1}x_0=x$. Hence $T^{n-1}J\supset X_{i_n}$.\par For the reverse containment, since $J\subset T^{-n+1}X_{i_n}$, it follows that\linebreak $T^{n-1}J\subset X_{i_n}$. 
	 It remains to apply $T$ to both sides of $T^{n-1}J=X_{i_n}$ to conclude the lemma.
		\end{proof}
		

\begin{lemma}\label{lemma4.3}
	 Let $m>m_0$, and let $J$ be a cylinder in $\mathcal{F}_m$. 
Then \linebreak $\mu(J)\gtrsim K_{J}^{-\delta}$. 
\end{lemma}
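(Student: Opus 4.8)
The plan is to deduce the bound from three ingredients already in hand: the volume-distortion estimate $\mu(T^mU)\asymp K_J^\delta\,\mu(U)$ of Lemma~\ref{lemma6}, the identity $T^mJ = TX_{i_m}$ from Lemma~\ref{equality}, and the uniform lower bound $\mu(TX_i)\ge\tau\mu(X)$ built into the pseudo-Markov hypothesis. Since $\mu$ is a probability measure, the last of these reads simply $\mu(TX_i)\ge\tau$.

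First I would write a nonempty cylinder $J\in\mathcal{F}_m$ as $J=X_{i_0}\cap T^{-1}X_{i_1}\cap\cdots\cap T^{-(m-1)}X_{i_m}$; it is open because $T$ is continuous, and $T^m$ is injective on $J$. Applying Lemma~\ref{lemma6} with the open set $U=J$ gives $\mu(T^mJ)\asymp K_J^\delta\,\mu(J)$; in particular there is a constant $C>0$, independent of $m$ and $J$, with $\mu(T^mJ)\le C\,K_J^\delta\,\mu(J)$. Next, by Lemma~\ref{equality} we have $T^mJ=TX_{i_m}$, which is a measurable set by the first clause of the pseudo-Markov definition, and whose measure is at least $\tau$ by the third clause. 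Combining these,
\[
C\,K_J^\delta\,\mu(J)\ \ge\ \mu(T^mJ)\ =\ \mu(TX_{i_m})\ \ge\ \tau,
\]
whence $\mu(J)\ge(\tau/C)\,K_J^{-\delta}\gtrsim K_J^{-\delta}$, as required.

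The only step that needs a word of justification — and the closest thing to an obstacle — is the application of Lemma~\ref{lemma6} to $U=J$ itself rather than to a ball compactly contained in $J$. This is permitted because that lemma is stated for arbitrary open subsets of $J$; moreover the hypothesis $m>m_0$, via Lemma~\ref{lemma3.7} (which gives $\diameter(J)\le K_1\diameter(X)K_J^{-1}<r_0$), keeps the Vitali covering argument of Lemma~\ref{lemma6} inside the radius range where Ahlfors regularity~\eqref{ar} applies. Everything else is the one-line chain of inequalities above, so no further calculation is needed.
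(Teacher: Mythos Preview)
Your proof is correct and follows essentially the same route as the paper: write $J$ in terms of its defining partition sets, apply Lemma~\ref{lemma6} with $U=J$ to get $\mu(T^mJ)\asymp K_J^\delta\mu(J)$, invoke Lemma~\ref{equality} to identify $T^mJ=TX_{i_m}$, and use the pseudo-Markov lower bound $\mu(TX_{i_m})\ge\tau\mu(X)$. Your added remark about $m>m_0$ and Lemma~\ref{lemma3.7} is not actually needed here (Lemma~\ref{lemma6} is stated for all $m$ and already restricts to balls of radius $<r_0$ in its proof), and there is a small indexing slip in your expression for $J$, but neither affects the argument.
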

	\begin{proof}
	\ignore{Let $\varepsilon>0$.  Since $T$ is quasi-bijective, there exists some $0<\tau<1$ so that $\mathsf{diam}(TX_i)\geq \tau \mathsf{diam}(X), \mu(TX_i)\geq \tau \mu(X)$ for all $i\in \mathcal{I}$. Since $\{X_i\}_{i\in \mathcal{I}}$ is pseudo-Markov, b{y Lemma \ref{equality}} there exists some $i\in \mathcal{I}$ so that $T^m J=TX_i$. Then there exist $x,y\in X_i$ so that $d(T^mx,T^my)\geq \mathsf{diam}(TX_i)-\varepsilon$.  Since $T$ is quasi-bijective, 
\begin{equation*}
	d(x,y)\geq \frac{K_1^{-1}d(T^mx,T^my)}{K_J},
\end{equation*}
so applying \eqref{eq10} again
\begin{equation*}
	d(x,y)\geq K_1^{-1}(\tau\mathsf{diam}(X)-\varepsilon)K_J^{-1}. 
\end{equation*} Since $\varepsilon$ is chosen arbitrarily, the lower bound on $\mathsf{diam}(J)$ is proved.\par }
	Write $J$ in the form
	\begin{equation*}
		J=X_{i_1}\cap T^{-1}X_{i_2}\cap \cdots T^{-m+1}X_{i_m};
	\end{equation*}
then
	\begin{equation*}
		K_J^\delta\mu(J)\underset{\text{Lemma }\ref{lemma6}}\asymp \mu(T^mJ)\underset{\text{Lemma }\ref{equality}}=
	\mu(TX_{i_m})  \geq \tau\mu(X),\end{equation*}
		thus $\mu(J)\gtrsim K_J^{-\delta}$. 
\end{proof}

\ignore{{\begin{remark} \rm  \comm{Maybe remove it.}
	By the definition of cylinders, we know for all $m<n$ and for all $I\in \mathcal{F}_n$, 
	\begin{equation*}
		I=\bigsqcup_{\substack{J\in \mathcal{F}_m\\ J\subset I}}J,
	\end{equation*}
	so by Lemma \ref{lemma3.7}, when the partition is pseudo-Markov and $T$ is quasi-bijective
	\begin{equation*}
		K_I^{-\delta}\asymp\sum_{\substack{J\in \mathcal{F}_m\\ J\subset I}}K_J^{-\delta}. 
	\end{equation*}
\end{remark}}}


We now prove a local estimate for the quasi-independence of the intersection of sets $\{A_n\}_n$ with balls. 

\begin{corollary}\label{localqi}
	 For any open ball $B=B(x,r)$ in $X$ with $2r<r_0$, there exists $m_r\in \mathbb{N}$ so that for all $n>m> m_r$, 
	\begin{equation*}
		\mu(B\cap A_m\cap A_n)\lesssim \mu(B)\left[\psi(m)^\delta\psi(n)^\delta+a_{n-m}\psi(n)^\delta+a_n\psi(m)^\delta\right],
	\end{equation*} 
	where the implicit constant   in the  above inequality is independent of $B$. 
\end{corollary}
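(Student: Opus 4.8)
The plan is to mimic the global quasi-independence argument (Lemma \ref{localqilemma} and Proposition \ref{globalqi}), but localized inside the fixed ball $B$, using the pseudo-Markov structure to control which cylinders of order $m$ actually meet $B$ and to sum their $K_J^{-\delta}$-weights against $\mu(B)$ rather than against $1$. First I would decompose $B\cap A_m\cap A_n$ according to the cylinders $J\in\mathcal F_m$: since $\mu(X\ssm\cup_iX_i)=0$ and hence $\mu(X\ssm\cup_{J\in\mathcal F_m}J)=0$, we have up to a null set
\begin{equation*}
B\cap A_m\cap A_n=\bigsqcup_{J\in\mathcal F_m}(B\cap J\cap A_m\cap A_n).
\end{equation*}
For each $J$ meeting $B$, Lemma \ref{localqilemma} already gives
$\mu(J\cap A_m\cap A_n)\lesssim K_J^{-\delta}[\psi(m)^\delta\psi(n)^\delta+a_{n-m}\psi(n)^\delta+a_n\psi(m)^\delta]$, and a fortiori the same bound holds for $\mu(B\cap J\cap A_m\cap A_n)$. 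So it suffices to show
\begin{equation*}
\sum_{\substack{J\in\mathcal F_m\\ J\cap B\neq\varnothing}}K_J^{-\delta}\lesssim \mu(B),
\end{equation*}
with an implied constant independent of $B$, for all $m$ large enough (depending on $r$).

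The key step is this last estimate, and it is where the pseudo-Markov hypothesis and Lemma \ref{lemma3.7} enter. Choose $m_r\ge m_0$ so that $K_1\diameter(X)K_J^{-1}<r$ for all $m>m_r$ and all $J\in\mathcal F_m$, which is possible by \eqref{toinfty}; then by Lemma \ref{lemma3.7} every $J\in\mathcal F_m$ meeting $B=B(x,r)$ has $\diameter(J)<r$ and hence $J\subset B(x,2r)$. By Lemma \ref{lemma4.3}, under the pseudo-Markov assumption $\mu(J)\gtrsim K_J^{-\delta}$; moreover distinct cylinders of order $m$ are disjoint. Therefore
\begin{equation*}
\sum_{\substack{J\in\mathcal F_m\\ J\cap B\neq\varnothing}}K_J^{-\delta}\lesssim \sum_{\substack{J\in\mathcal F_m\\ J\subset B(x,2r)}}\mu(J)\le \mu\big(B(x,2r)\big)\underset{\eqref{ar}}\lesssim \mu(B),
\end{equation*}
using Ahlfors regularity \eqref{ar} (valid since $2r<r_0$) to pass from $\mu(B(x,2r))$ to $\mu(B(x,r))$ up to the constant $2^\delta\eta_2/\eta_1$. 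Combining the two displays and summing over $J$ yields the claim, with the constant depending only on $\eta_1,\eta_2,\delta,p,\tau,K_1,K_2$ and not on $B$.

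The main obstacle — really the only non-routine point — is making sure the containment $J\subset B(x,2r)$ holds uniformly, i.e.\ that the threshold $m_r$ can be chosen depending only on $r$ and not on the particular ball; this is immediate from \eqref{toinfty} since $\inf_{J\in\mathcal F_m}K_J\to\infty$ gives a uniform bound on $\sup_{J\in\mathcal F_m}\diameter(J)$ via Lemma \ref{lemma3.7}. One should also take $m_r$ at least as large as the threshold $n_r$ from Lemma \ref{local4} and the $m_0$ from Lemma \ref{lem3.6} so that Lemma \ref{localqilemma} is applicable to every $J\in\mathcal F_m$ for $m>m_r$; after that the estimate is a direct combination of Lemmas \ref{localqilemma}, \ref{lemma3.7} and \ref{lemma4.3} with \eqref{ar}, exactly paralleling the passage from Lemma \ref{localqilemma} to Proposition \ref{globalqi} but with $\mu(B)$ in place of $\mu(X)=1$.
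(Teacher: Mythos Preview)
Your proposal is correct and follows essentially the same approach as the paper: decompose by cylinders, apply Lemma~\ref{localqilemma} on each piece, use Lemma~\ref{lemma3.7} to trap the relevant cylinders inside $B(x,2r)$, convert $K_J^{-\delta}$ to $\mu(J)$ via Lemma~\ref{lemma4.3}, sum, and use Ahlfors regularity. The only difference is cosmetic: the paper inserts an intermediate layer of level-$q$ cylinders $I\in\mathcal F_q$ (first proving the estimate on each $I$ by summing over $J\in\mathcal F_m$ with $J\subset I$, then covering $B$ by the $I$'s), whereas you go directly to level $m$ and cover $B$ by level-$m$ cylinders. Since the diameter condition from \eqref{toinfty} holds a fortiori at level $m\ge q$, your shortcut is legitimate and slightly cleaner; nothing is lost. (One small remark: the threshold $n_r$ from Lemma~\ref{local4} is not actually needed here---only $m_0$ is, so that Lemma~\ref{localqilemma} applies.)
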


\begin{proof}
	Let $q>m_0$ and let $I\in \mathcal{F}_q$.
	For all $n>m\geq q$, by Lemma \ref{localqilemma} we get 
\begin{align}
	&\mu(I\cap A_m\cap A_n)\notag 
	=  \sum_{\substack{J\in \mathcal{F}_m\\ J\subset I}} \mu(J\cap A_m\cap A_n)\notag 
	\lesssim  \sum_{\substack{J\in \mathcal{F}_m\\ J\subset I}} \mu(J^*\cap A_n)\notag\\
	\lesssim & \sum_{\substack{J\in \mathcal{F}_m\\ J\subset I}}K_{J}^{-\delta} \left[[\psi^\delta(m)\psi(n)^\delta+a_{n-m}\psi(n)^\delta]+ [\psi(m)^\delta\psi(n)^\delta+a_n\psi(m)^\delta ]\right]\notag\\
	\underset{\text{Lemma }\ref{lemma4.3}}\lesssim & K_I^{-\delta}\left[\psi(m)^\delta\psi(n)^\delta+a_{n-m}\psi(n)^\delta+a_n\psi(m)^\delta\right].\label{localqiest}
\end{align}\par 
Now let $B(x,r)$ be an open ball in $X$ with $r<r_0/2$. By the expanding property \eqref{toinfty}, we can take some $m_r>m_0$ so that for all $q>m_r$, 
\begin{equation*}
	\inf_{J\in \mathcal{F}_q}K_J>r^{-1}K_1\diameter(X);
\end{equation*}
Let $q>m_r$. Then by Lemma \ref{lemma3.7} for all $J\in \mathcal{F}_q$, $\diameter(J)<r$, so 
\begin{equation*}
	B(x,r)\subset  \bigsqcup_{\substack{J\in \mathcal{F}_q,\\ J\cap B(x,r)\neq \varnothing}}J \subset B(x,2r). 
\end{equation*}
Then 
\begin{equation*}
 \mu\big(B(x,r)\big)\leq  \sum_{\substack{J\in \mathcal{F}_q,\\ J\cap B(x,r)\neq \varnothing}}\mu(J)\leq  \mu\big(B(x,2r)\big). 
\end{equation*}
Therefore by \eqref{localqiest}, 
\begin{align*}
	\mu\big(B(x,r)\cap A_n\cap A_m\big)\leq & \sum_{\substack{J\in \mathcal{F}_q,\\ J\cap B(x,r)\neq \varnothing}}\mu(J\cap A_n\cap A_m)\\
	\underset{\eqref{localqiest}}\lesssim & \sum_{\substack{J\in \mathcal{F}_q,\\ J\cap B(x,r)\neq \varnothing}} K_I^{-\delta}\left[\psi(m)^\delta\psi(n)^\delta+a_{n-m}\psi(n)^\delta+a_n\psi(m)^\delta\right] \\
	\underset{\text{Lemma }\ref{lemma4.3}}\lesssim & \sum_{\substack{J\in \mathcal{F}_q,\\ J\cap B(x,r)\neq \varnothing}} \mu(J)\left[\psi(m)^\delta\psi(n)^\delta+a_{n-m}\psi(n)^\delta+a_n\psi(m)^\delta\right] \\
	\leq &\  \mu\big(B(x,2r)\big)\left[\psi(m)^\delta\psi(n)^\delta+a_{n-m}\psi(n)^\delta+a_n\psi(m)^\delta\right]\\
	 \leq &\  2^\delta \frac{\eta_2}{\eta_1}\mu\big(B(x,r)\big)\left[\psi(m)^\delta\psi(n)^\delta+a_{n-m}\psi(n)^\delta+a_n\psi(m)^\delta\right].
\end{align*}
\end{proof}



Finally let us apply the following generalization of the {Lebesgue Density Theorem} to 
finish the proof of Theorem \ref{thm3}. Recall that a  probability measure $\mu$ on $X$ is called \textsl{doubling} if there exists a constant $D>0$ so that for any $x\in X$ and $r>0$, 
	\begin{equation*}
		\mu(B(x,2r))\leq D\mu(B(x,r)).
	\end{equation*}
It is clear that  Ahlfors regular 
measures are doubling.

\begin{theorem}[Lebesgue Density Theorem] \label{ldt}
	Let $(X,d)$ be a metric space with a Borel doubling probability measure $\mu$, and let $E$ be a Borel subset of $X$. Suppose there exist constants $C>0$ and $r_0>0$ so that for all balls $B\subset X$ with radii less than $r_0$, we have 
	\begin{equation*}
		\mu(E\cap B)\geq C\mu(B).
	\end{equation*} Then $\mu(E)=1$. 
\end{theorem}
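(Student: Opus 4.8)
The plan is to prove the statement by contradiction, running the standard Vitali-covering argument that underlies the classical Lebesgue Density Theorem. Assume $\mu(X\ssm E)>0$ and set $F:=X\ssm E$. Since $\mu$ is a finite Borel measure on a metric space, it is outer regular; hence for every $\vre>0$ there is an open set $U\supseteq F$ with $\mu(U\ssm F)<\vre$. The idea is that, by hypothesis, every small ball $B$ is ``mostly'' in $E$, i.e.\ $\mu(B\cap F)\le(1-C)\mu(B)$, so $F$ cannot be efficiently covered by small balls unless it is null; trapping $F$ inside $U$ of nearly the same measure turns this into a quantitative estimate.

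Concretely, I would next cover $F$ by small balls contained in $U$. For each $x\in F$ openness of $U$ gives a radius $\rho(x)>0$ with $B(x,\rho(x))\subset U$; let $\mathcal{B}$ be the family of all balls $B(x,r)$ with $x\in F$ and $0<r<\min\{\rho(x),r_0\}$. This family has uniformly bounded radii, so by the $5r$-covering lemma (Vitali's covering theorem, as already invoked in Lemma~\ref{lemma4}) there is a countable pairwise disjoint subfamily $\{B_i=B(x_i,r_i)\}_i\subset\mathcal{B}$ with $F\subset\bigcup_i 5B_i$, where $5B_i:=B(x_i,5r_i)$.

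The final step is a chain of inequalities combining four ingredients: the dilated balls cover $F$; the doubling property (using $5\le 2^3$, so $\mu(5B_i)\le D^{3}\mu(B_i)$); the hypothesis $\mu(E\cap B_i)\ge C\mu(B_i)$, valid since $r_i<r_0$; and the disjointness of the $B_i$ together with $B_i\subset U$, which gives $\sum_i\mu(E\cap B_i)\le\mu(E\cap U)=\mu(U\ssm F)$. Thus
\[
\mu(F)\le\sum_i\mu(5B_i)\le D^{3}\sum_i\mu(B_i)\le\frac{D^{3}}{C}\sum_i\mu(E\cap B_i)\le\frac{D^{3}}{C}\,\mu(U\ssm F)<\frac{D^{3}}{C}\,\vre .
\]
Since $\vre>0$ is arbitrary, $\mu(F)=0$, i.e.\ $\mu(E)=1$.

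I do not expect any serious obstacle here: this is the classical argument, and the only non-elementary input is the outer regularity of $\mu$, which holds for every finite Borel measure on a metric space and should simply be cited rather than re-proved. (Alternatively, one could deduce the statement from the Lebesgue differentiation theorem for doubling measures: $\mu$-a.e.\ point of $F$ would be a density point of $F$, hence a point of density $0$ for $E$, contradicting $\mu(E\cap B)\ge C\mu(B)$ for all small $B$; but the direct covering argument above is more self-contained.)
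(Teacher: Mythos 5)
Your proof is correct. Worth noting: the paper itself does \emph{not} prove this statement; it cites it as \cite[\S 8, Lemma 7]{BDV} and moves on, so there is no in-paper argument to compare against. Your Vitali-covering derivation is a clean, self-contained alternative to that citation. A few small points that are implicitly fine but worth being aware of: (i) outer regularity of a finite Borel measure on a metric space is indeed a standard fact, proved by showing the class of sets that can be sandwiched between a closed set and an open set of nearly equal measure is a $\sigma$-algebra containing the closed sets; (ii) the subfamily produced by the $5r$-covering lemma is automatically countable here either because $X$ is separable (a standing assumption in the paper) or, more intrinsically, because a doubling probability measure has full support, so each disjoint ball carries positive mass and only countably many can be disjoint; (iii) the identity $E\cap U = U\ssm F$ used in the last inequality is exactly right since $F=X\ssm E$. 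The chain
\[
\mu(F)\le\sum_i\mu(5B_i)\le D^{3}\sum_i\mu(B_i)\le\frac{D^{3}}{C}\sum_i\mu(E\cap B_i)\le\frac{D^{3}}{C}\mu(U\ssm F)<\frac{D^{3}}{C}\vre
\]
then gives $\mu(F)=0$ directly (no genuine contradiction is needed), which is slightly cleaner than the framing as a proof by contradiction.
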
 

For a proof, see \cite[\S8, Lemma 7]{BDV}. 

\smallskip

\begin{proof}[Proof of Theorem \ref{thm3}]
 By Lemma \ref{local4} and Corollary \ref{localqi}, there exist positive constants $s_1,s_2,s_3$ so that for all $B=B(x,r)$ in $X$ with $r<r_0/2$ and for all $n>\max\{n_r,m_r\}$,
	\begin{equation*}
		\mu(B)s_1\left(\psi(n)^\delta-a_n\right)\leq \mu(B\cap A_n)\leq \mu(B)s_3\left(\psi(n)^\delta+a_n\right).
	\end{equation*}
	\begin{equation*}
		\mu(B\cap A_n\cap A_m)\leq \mu(B)s_2\left(\psi(m)^\delta\psi(n)^\delta+a_{n-m}\psi(n)^\delta+a_n\psi(m)^\delta\right).
	\end{equation*}
	Now take $b_n=\psi(n)^\delta$ and
	\begin{equation*}
		E_k=\begin{cases}
			A_k\cap B & \text{if }k>\max\{n_r,m_r\}\\
			\varnothing & \text{otherwise.}
		\end{cases}
	\end{equation*}
	 \begin{equation*}
	 \mu(\limsup_{n\to\infty}A_n\cap B)	=\mu(\limsup_{k\to\infty}E_k)\geq \frac{\big(\mu(B)s_1)^2}{2s_2\mu(B)} =  \frac{s_1^2}{2s_2}\mu(B). 
	 \end{equation*}
	 Then by the Lebesgue Density Theorem (Theorem \ref{ldt}), 
	 \begin{equation*}
	\mu\big(R_T^f(\psi)\big)=\mu(\limsup_{n\to\infty}A_n)=1.
	\end{equation*}
\end{proof}

\ignore{To apply Theorem \ref{ldt}, we first prove the following consequence of the Paley-Zygmund inequality. 

\begin{proof}
	The proof of this lemma uses ideas introduced in \cite[\S2.3.1]{hussain}. Define 
	\begin{equation*}
		f_N=\sum_{n=1}^N \chi_{E_n}.
	\end{equation*} Let $0\leq \theta\leq 1$. In particular, since $\sum_{n=1}^\infty \mu(E_n)=\infty$, if 
	\begin{equation*}
		f_N(x)\geq \theta \sum_{n=1}^N \mu(E_n)\quad \text{infinitely often,}
	\end{equation*}
	then $x\in E_n$ infinitely often; i.e., 
	\begin{equation*}
		\limsup_{N\to\infty}\left\{x:f_N(x)\geq \theta\sum_{n=1}^N \mu(E_n)\right\}\subset \limsup_{n\to\infty}E_n
	\end{equation*}
	Then by the Paley-Zygmund inequality, which states 
	\begin{equation*}
		\mathbb{P}(Z>\theta \mathbb{E}[Z])\geq (1-\theta)^2 \frac{\mathbb{E}[Z]^2}{\mathbb{E}[Z^2]}
	\end{equation*} for all random variables $Z$ with finite variance, 
	we have 
	\begin{align*}
		\mu\left(\limsup_{n\to\infty}E_n\right)\geq & \mu\left(\limsup_{N\to\infty}\left\{x:f_N(x)\geq \theta\sum_{n=1}^N \mu(E_n)\right\}\right) \\
		\geq & (1-\theta)^2 \limsup_{N\to\infty} \frac{\left(\int f_n\,d\mu\right)^2}{\int f_n^2\,d\mu}\\
		=& (1-\theta)^2 \limsup_{N\to \infty} \frac{\left(\sum_{n=1}^N \mu(E_n)\right)^2}{\sum_{n,m=1}^N \mu(E_n\cap E_m)}.
	\end{align*}
	By taking $\theta\to 0$, we prove the lemma. 
\end{proof}}

\section{Examples}\label{example}

Here we list several examples of dynamical systems to which our theorems apply. The first two come from the paper \cite{hussain}:
 \begin{itemize}
		\item  $X=[0,1]$, $T:x\mapsto \beta x\mod{1}$, where $\beta > 1$, and $\mu$ is the $T$-invariant probability measure absolutely continuous with respect to Lebesgue measure, namely (see \cite{renyi})
		\begin{equation*}
	\mu(E)=\begin{cases}
	\Leb(E) & \text{if }\beta \text{ is an integer,}\\
	\frac{1}{\sum_{k=0}^\infty \frac{\{\beta^k\}}{\beta^k}}\sum_{k=0}^\infty\frac{\Leb\left(E\cap [0,\{\beta^k\}]\right)}{\beta^k} & \text{if }\beta \text{ is not an integer,}
	\end{cases}
\end{equation*} 
 where   $\{x\}$ denotes the fractional part of $x$;
\medskip
		\item  $X=[0,1]$, $T:x\mapsto\frac{1}{x}\mod{1}$, and $\mu$ is the Gauss measure given by ${d\mu} =\frac{dx}{(\log 2)(1+x)}$.
	\end{itemize}
Sections 3.1--3.2 of \cite{hussain} together with Remark \ref{comparison} show  that in  both cases   the assumptions of Theorem \ref{thm2.4} are satisfied.  In fact,  in  both cases uniform mixing   with exponential rate was first exhibited in  \cite{Ph}, together with a quantitative shrinking target property of these systems.


{The pseudo-Markov property holds for the Gauss map but only for some special $\beta$-transformations. We will prove the full measure in the divergence case for arbitrary $\beta$-transformations in \S\ref{beta}.}


\ignore{
\subsection{$\beta$-adic expansion}

Consider the space $X=[0,1]$. Let $\beta>1$ be a real number. Consider the map $T:[0,1]\to [0,1],x\mapsto \beta x\mod{1}$. R\'{e}nyi \cite{renyi} proved that there exists a unique invariant measure for $T$; we denote this invariant measure by $\nu$, and for all $E$ Lebesgue measurable, 
\begin{equation*}
	\nu(E)=\begin{cases}
	\mu(E) & \text{if }\beta \text{ is an integer}\\
	\frac{1}{\sum_{k=0}^\infty \frac{\{\beta^k\}}{\beta^k}}\sum_{k=0}^\infty\frac{\mu(E\cap [0,\{\beta^k\}])}{\theta^k} & \text{if }\beta \text{ is not an integer}
	\end{cases}
\end{equation*} where $\mu$ is the Lebesgue measure and $\{x\}$ denotes $x-\lfloor x\rfloor$. The quantitative shrinking target property of this system has been studied in \cite{Ph}. Section 3.1 of \cite{hussain} together with Remark 1 shows that the system satisfies the assumptions for Theorem \ref{thm2.4}.

By Theorem \ref{thm2.4}, for any function $\psi:\mathbb{N}\to \mathbb{R}^+$ and any Lipschitz function $f:X\to X$, the shifted recurrence set $R_T^f(\psi)$ satisfies \begin{itemize}
		\item  $\mu(R_T^f(\psi))=0$ if $\sum_{n=1}^\infty \psi(n)^\delta<\infty$; 
		\item  $\mu(R_T^f(\psi))=1$ if $\sum_{n=1}^\infty \psi(n)^\delta=\infty$.
	\end{itemize}

\subsection{Continued fractions expansion}

Consider the space $X=[0,1]$ and the map $T:[0,1]\to [0,1], x\mapsto \frac{1}{x}\mod{1}$. Gauss found that the unique invariant measure $\nu$ for $T$ is 
\begin{equation*}
	\frac{d\nu}{d\mu}=\frac{1}{\log 2(1+x)}
\end{equation*}
where $\mu$ is the Lebesgue measure. The quantitative shrinking target property of this system has been studied in \cite{Ph}. Section 3.2 of \cite{hussain} together with Remark 1 shows that the system satisfies the assumptions for Theorem \ref{thm2.4}.

	By Theorem \ref{thm2.4}, for any function $\psi:\mathbb{N}\to \mathbb{R}^+$ and any Lipshitz function $f:X\to X$, the shifted recurrence set $R_T^f(\psi)$ satisfies \begin{itemize}
		\item  $\mu(R_T^f(\psi))=0$ if $\sum_{n=1}^\infty \psi(n)^\delta<\infty$; 
		\item  $\mu(R_T^f(\psi))=1$ if $\sum_{n=1}^\infty \psi(n)^\delta=\infty$.
	\end{itemize}}

\smallskip
	

Our last example deals with self-similar sets. Let 
$$\Theta:=\{\theta_i(x):\mathbb{R}\to \mathbb{R}\}_{i=1}^L$$ be a system of similarities with 
\begin{equation*}
	|\theta_i(x)-\theta_i(y)|=r_i |x-y| \text{ for all } x,y\in \mathbb{R},\  
	 i=1,\ldots, L,
\end{equation*}
where $0<r_i<1$ for all $i$.
Then by {\cite[Theorem 3.1.(3)]{hutchinson}} there exists a unique nonempty compact set $X\subset \R$, called the {\sl attractor} of the system, such that $$X=\bigcup_{i=1}^L \theta_i(X).$$ Furthermore, we assume that $\Theta$ satisfies the {\sl open set condition}: that is, there exists a non-empty bounded open set $U\subset \mathbb{R}$ such that 
 \begin{equation*}\label{osc}
	\bigcup_{i=1}^L \theta_i(U) \subset U\text{ and } \theta_i(U)\cap \theta_j(U)=\varnothing \text{ for all }i\neq j.
\end{equation*} Then it is known that the Hausdorff dimension of $X$ is equal to the unique solution $\delta\in[0,1]$ of the equation  $\sum_{i=1}^L r_i^\delta=1$ {(see \cite[Theorem 9.3]{falconer})}. Furthermore, the normalized restriction  $\mu$ of the $\delta$-dimensional Hausdorff measure to $X$ is positive, finite and satisfies 
\eq{mu}
{\mu=
\sum_{i=1}^L r^\delta_i \cdot(\theta_i)_*\mu.}
(For a proof, see \cite[Theorem 4.4.(1)]{hutchinson}.) \ignore{\comm{Please add exact references for both claims, perhaps both can refer to the original paper \cite{hutchinson}.}}

\smallskip

To {define the corresponding expanding map and} construct the cylinders, we consider the following lemma from \cite{Sc,Gr}:
	
	{\begin{lemma}[\cite{Sc}, Theorem 2.2; \cite{Gr}, Lemma 3.3] \label{lemma5.1}
		Let $
		\{\theta_i:\mathbb{R}^n\to \mathbb{R}^n\}_{i=1}^L$ be a system of similarities  satisfying the open set condition, $X$  its attractor, and $\mu$ the self-similar measure given by \equ{mu}. Then there exists a nonempty compact set $A$ with 
		\begin{itemize}
%
			\item[(i)] $\theta_i(A)\subset A$ for all $i=1,\ldots,L$;
				\smallskip
			\item[(ii)] $\theta_i\big(\mathsf{int}(A)\big)\cap \theta_j\big(\mathsf{int}(A)\big)=\varnothing$ for each $i\neq j$;
			\smallskip
			\item[(iii)] $\mu\big(\mathsf{int}(A)\cap X
			\big)=1$.
		\end{itemize}
	\end{lemma}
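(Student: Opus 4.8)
The statement is quoted from \cite{Sc,Gr}, but let me indicate how one would prove it. The plan is to realize $A$ as the closure of a \emph{strong} open set. By Schief's theorem \cite{Sc}, the open set condition is equivalent to the strong open set condition, so one may fix a nonempty bounded open set $V\subset\R^n$ with $\bigcup_{i=1}^L\theta_i(V)\subset V$, $\theta_i(V)\cap\theta_j(V)=\varnothing$ for $i\neq j$, \emph{and} $V\cap X\neq\varnothing$. Put $A:=\overline V$; it is nonempty (since $V\supset V\cap X\neq\varnothing$) and compact (closed and bounded in $\R^n$). Here and below, for a word $w=(i_1,\dots,i_m)$ we write $\theta_w:=\theta_{i_1}\circ\cdots\circ\theta_{i_m}$ and $r_w:=r_{i_1}\cdots r_{i_m}$; note $\theta_w(V)\subset V$ for every $w$, by iterating $\theta_i(V)\subset V$.

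Properties (i) and (ii) are then formal. Each $\theta_i$ is a homeomorphism of $\R^n$, so it commutes with closure and interior; hence $\theta_i(A)=\theta_i(\overline V)=\overline{\theta_i(V)}\subset\overline V=A$, which is (i), and $\theta_i\big(\mathsf{int}(A)\big)=\mathsf{int}\big(\overline{\theta_i(V)}\big)$. Property (ii) now follows from the elementary fact that two disjoint open sets $G_1,G_2$ satisfy $\mathsf{int}(\overline{G_1})\cap\mathsf{int}(\overline{G_2})=\varnothing$: if a ball $B$ were contained in $\overline{G_1}\cap\overline{G_2}$, then $G_1\cap B$ would be a nonempty open subset of $\overline{G_2}$ and hence would meet $G_2$, contradicting $G_1\cap G_2=\varnothing$. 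Apply this with $G_i=\theta_i(V)$ and $G_j=\theta_j(V)$.

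For (iii) I would pass to the symbolic coding. Since $V$ is open and meets $X$, pick $x_0\in V\cap X$; using $X=\bigcup_{|w|=m}\theta_w(X)$ and $\diameter\big(\theta_w(X)\big)\le(\max_i r_i)^m\diameter(X)\to 0$, choose a finite word $v=(i_1,\dots,i_m)$ with $x_0\in\theta_v(X)\subset V$. By \equ{mu} and uniqueness of the self-similar measure, $\mu$ is the image under the coding map $\pi:\{1,\dots,L\}^{\N}\to X$ of the Bernoulli measure with weights $r_1^\delta,\dots,r_L^\delta$. Splitting coordinates into consecutive blocks of length $m$, the events ``the $k$-th block equals $v$'' ($k=0,1,2,\dots$) are independent, each of probability $r_v^\delta>0$, so for $\mu$-almost every $\omega$ some block equals $v$; for such $\omega$ one has $\pi(\sigma^{km}\omega)\in\theta_v(X)\subset V$, whence $\pi(\omega)=\theta_{\omega_1\cdots\omega_{km}}\big(\pi(\sigma^{km}\omega)\big)\in\theta_{\omega_1\cdots\omega_{km}}(V)\subset V$. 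Therefore $\mu(X\cap V)=1$, and since $V\subset\mathsf{int}(\overline V)=\mathsf{int}(A)$ we conclude $\mu\big(\mathsf{int}(A)\cap X\big)=1$.

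The only genuinely substantial ingredient is Schief's equivalence of the open set condition with the strong open set condition; without it one cannot even guarantee an open set meeting $X$, and (iii) would fail. Its proof is a counting argument exploiting that, under the open set condition, only boundedly many cylinders $\theta_w(X)$ of a comparable size can meet a fixed ball. If one is content to cite \cite{Sc} (or \cite{Gr}) for that fact, the rest is the bookkeeping above, which is valid in any $\R^n$ although here only $n=1$ is needed.
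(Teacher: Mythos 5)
The paper does not prove this lemma; it is imported by citation, with the footnote that (i)--(ii) come from Schief's Theorem~2.2 and (iii) is extracted from the proof of Graf's Lemma~3.3 (where it is shown that $\mu\big(A\ssm\mathsf{int}(A)\big)=0$ and $X=\supp\mu\subset A$). Your blind reconstruction is correct and follows the standard route underlying those citations: invoke Schief's equivalence of the OSC with the strong OSC to obtain an open set $V$ meeting $X$, set $A=\overline V$, and observe that (i)--(ii) are purely topological (homeomorphisms commute with $\overline{\phantom{x}}$ and $\mathsf{int}$, and disjoint open sets have disjoint $\mathsf{int}(\overline{\,\cdot\,})$'s). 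The one place where your route genuinely diverges from what the paper alludes to is (iii): you prove $\mu(V\cap X)=1$ directly by passing to the Bernoulli coding and a block-hitting (Borel--Cantelli) argument, using that some cylinder $\theta_v(X)$ sits inside $V$; the Graf route instead shows $\mu\big(A\ssm\mathsf{int}(A)\big)=0$ and $X\subset A$ and subtracts. The two are close in spirit---both ultimately rest on the SOSC producing a cylinder inside $V$ and then iterating the self-similarity---but your Bernoulli phrasing is a bit more self-contained, while the Graf phrasing is the one the paper explicitly points the reader to. One small remark worth retaining is that your topological claim in (ii), namely that $G_1\cap G_2=\varnothing$ for open $G_1,G_2$ forces $\mathsf{int}(\overline{G_1})\cap\mathsf{int}(\overline{G_2})=\varnothing$, deserves the one-line justification you gave, since without it a reader might worry that closures could overlap on sets with interior. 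Everything checks out.
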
}
	
	We remark  that parts (i) and (ii) are stated in  \cite[Theorem 2.2]{Sc}, and part (iii) follows from the proof of  \cite[Lemma 3.3]{Gr}, where it is shown that $\mu\big((A\smallsetminus \mathsf{int}(A)
	\big)=0$ and $X = \supp\mu\subset A$.
\smallskip

	Now define 
		\begin{equation}\label{order1}
			X_i:=\theta_i\big(\mathsf{int} (A)\cap X\big).
		\end{equation} Each $X_i$ is open in $X$ because $\theta_i$ is an open map.  The disjointness of $X_i$ and $X_j$ for $i\neq j$ follows from Lemma 	\ref{lemma5.1}(ii). Finally, one can write		\begin{equation*}
		\begin{aligned}
			\mu\left(\bigcup_{i=1}^L X_i\right)&=\sum_{i=1}^L \mu\big(\theta_i(\mathsf{int} (A)\cap X)\big)=\sum_{i=1}^L r_i^\delta \mu(\mathsf{int} (A)\cap X)\\ \text{(by Lemma \ref{lemma5.1}(iii))\ }&=\sum_{i=1}^L r_i^\delta= 1 = \mu(X).
			\end{aligned}
		\end{equation*} 
		{Hence one can define the map $T:X\to X$ $\mu$-almost everywhere by setting 
		$T|_{X_i}=\theta_i^{-1}|_{X_i}$. {It follows from \equ{mu} that} $(X, \mu,T)$ is a   measure-preserving system. Clearly} $T|_{X_i}$ is continuous and injective {for every $i$.
		Therefore}
		the collection $\{X_i\}_{i=1}^L$ satisfies our assumption for being cylinders of order $1$. 
	\smallskip
	
		For  $\mathbf{i} = (i_1,\dots,i_m)\in \{1,\ldots,L\}^m$  let us define $$\theta_{\mathbf{i}} := \theta_{i_1}\circ   \cdots \circ \theta_{i_m}{\quad\text{and}\quad r_{\mathbf i} := \prod_{k=1}^m r_{i_k}}.$$
		Using the definition \eqref{cylinders} of cylinders of order $m$ together with   
		 \eqref{order1}, it is easy to see that the set $\mathcal F_m$ of cylinders of order $m$ is precisely
		$$
\big\{X_{\mathbf i} := \theta_{\mathbf i}\big(\mathsf{int} (A)\cap X\big) : \mathbf{i}  \in \{1,\ldots,L\}^m\big\},
$$
and the restriction of $T^m$ onto $X_{\mathbf i}\in \mathcal F_m$ coincides with $\theta_{\mathbf i}^{-1}$. This, 
in particular, implies that
\eq{measure}{\mu(X_{\mathbf i} ) = r_{\mathbf i}^\delta} and \eq{uniform}{K_J = \frac{|T^mx-T^my|}{|x-y|} = r_{\mathbf i}^{-1}\quad\text{for all }  x,y\in J = X_{\mathbf i}\in \mathcal F_m.}	
\smallskip

\ignore{For $m\leq n$,  say that $\mathbf{i} = (i_1,\dots,i_m)\in \{1,\ldots,L\}^m$ is a  {\sl substring} of \linebreak 
$\mathbf{j} = (j_1,\dots,j_n)$ if $i_k=  j_k$  for all $1\leq k\leq m$. By  
{Lemma \ref{lemma5.1}(ii,iii)}, for all $\mathbf{i} = (i_1,\dots,i_m)\in \{1,\ldots,L\}^m$ and $\mathbf{j} = (j_1,\dots,j_n)\in \{1,\ldots,L\}^n$ with $m\leq n$, the following holds: 
\begin{equation}\label{unique}
\text{if $\mathbf{i}$ is not a  substring of $\mathbf{j}$,  then }\mu\big(\theta_{\mathbf{i}}(X)\cap \theta_{\mathbf{j}}(X)\big)=0.\end{equation} 
For any $\mathbf{i}\in \{1,\ldots,L\}^{\N}$ the set 
$\bigcap_{k=1}^\infty \theta_{i_1}\circ 
\cdots \circ \theta_{i_k}(X)$ is a singleton \mycomm{because $\theta_{i_1}\circ 
\cdots \circ \theta_{i_k}([0,1])$ is an interval of length $\prod_{j=1}^k r_{i_j}$}; denote  its unique element by $\pi(\mathbf{i})$. We can view $\pi$ as a symbolic coding of elements of $X$: it follows from \eqref{unique} that  the set of points in $X$ with multiple distinct symbolic coding has measure zero. 
Now  define $T:X\to X$ by 
\begin{equation}\label{pi}T\big(\pi(\mathbf{i})\big):=\pi\big(\sigma(\mathbf{i})\big),\end{equation} 
where $\sigma$ is the left shift map on $\{1,\ldots,L\}^\mathbb{N}$.}

\ignore{\mynew{Let $U$ be defined as in \eqref{osc}. Suppose $x\in \theta_i(\overline{U})\cap \theta_j(\overline{U})$ for some $i\neq j$. Since $\theta_i(U)\cap \theta_j(U)=\varnothing$, $x\in \partial U$, so $\bigcup_{i,j=1}^{L} (\theta_i(\overline{U})\cap \theta_{j}(\overline{U}))$ is at most countable. By {the uniqueness property} of $X$, $X\subset \overline{U}$, so in terms of symbolic coding, the set of points in $X$ with multiple distinct symbolic coding is at most countable and hence it has measure zero. For any $\ell\in \{1,\ldots,L\}^\mathbb{N}$ the set 
$\bigcap_{j=1}^\infty \theta_{\ell(1)}\circ \theta_{\ell(2)}\circ \cdots \circ \theta_{\ell(j)}(X)$ is a singleton; denote  its unique element by $\pi(\ell)$. Now  define $T:X\to X$ by $T\big(\pi(\ell)\big)=\pi\big(\sigma(\ell)\big)$ where $\sigma$ is the left shift map on $\{1,\ldots,L\}^\mathbb{N}$. Then $(X, \mu,T)$ is a   measure-preserving system.}

\mynew{This example generalizes the recurrence result found in \cite{CWW} by allowing the systems' contraction rates to vary.}}

	\smallskip
	

Denote $$r_{\max}:= \max_{i = 1,\dots,L}r_i
.$$ Let us now verify assumptions \eqref{ar}--\eqref{conf} of Theorem \ref{thm2.4}.

\begin{enumerate}
	\item[\eqref{ar}:] By   \cite[Theorem 5.3(1)(i)]{hutchinson}, 
	\begin{equation*}
		\gamma_1\leq \liminf_{r\to 0}\frac{\mu\big(B(x,r)\big)}{r^\delta}\leq \limsup_{r\to 0}\frac{\mu\big(B(x,r)\big)}{r^\delta}\leq \gamma_2
	\end{equation*} for some $0<\gamma_1<\gamma_2<\infty$ and all $x\in X$. Clearly it implies that $\mu$ is Ahlfors regular of dimension $\delta$.
	\ignore{Then there exists $r_0>0$ so that for all $r<r_0$,
	\begin{equation*}
		1/2\gamma_1r^\delta<\mu\big(B(x,r)\big)<2\gamma_2 r^\delta
	\end{equation*}}
	\smallskip

	\item[\eqref{em}:] ($T$ is uniformly mixing) Let $E$ be a non-empty open ball in $X$, let $F$ be a measurable set in $X$, and let $m\in \mathbb{N}$. Note that for all cylinders $J= X_{\mathbf i}\in \mathcal{F}_m$, where $\mathbf{i}  \in \{1,\ldots,L\}^m$ one can write 
	\begin{equation*}
	\mu(J\cap T^{-m}F)=r_{\mathbf i}^\delta\mu(T^{m}J\cap F)=r_{\mathbf i}^\delta\mu\big(\mathsf{int} (A)\cap F\big) \underset{\equ{measure}}= \mu(J)\mu(F).
	\end{equation*} 
	Note that since $E$ is an interval, we can (up to a set of measure zero) write $E$ as a disjoint union of cylinders of order $m$ and at most two balls contained in cylinders of order $m$; i.e., $$E=\Big(\bigcup_{ J\in \mathcal{F}_{m},\,J\subset E}J\Big) \cup E_1 \cup E_2$$ where the unions above are disjoint, and $E_1,E_2$ are contained in some cylinders $J_1,J_2\in \mathcal{F}_{m}$ respectively, hence have measure  not greater than 
	$r_{\max}^{m\delta}$. Then 
	\begin{align*}
		&\left|\mu(E\cap T^{-{m}}F)-\mu(E)\mu(F)\right|\\
		=\ &\Big|\sum_{\substack{J\in \mathcal{F}_{m},\\ J\subset E}} (J\cap T^{-{m}}F)-\sum_{\substack{J\in \mathcal{F}_{m},\\ J\subset E}} \mu(J)\mu(F)+\mu(E_1\cap T^{-{m}}F)-\mu(E_1)\mu(F)\\
		+\ &\mu(E_2\cap T^{-{m}}F)-\mu(E_2)\mu(F)\Big|\\
		=\ & \Big|\mu(E_1\cap T^{-{m}}F)-\mu(E_1)\mu(F)+\mu(E_2\cap T^{-{m}}F)-\mu(E_2)\mu(F)\Big|\\
		\leq\ & \mu(J_1\cap T^{-{m}}F)+\mu(J_1)\mu(F)+\mu(J_2\cap T^{-{m}}F)+\mu(J_2)\mu(F)\\
		=\ &{4}r_{\max}^{m\delta}\mu(F),
	\end{align*}
	and \eqref{em} follows with $a_n = {4}r_{\max}^{n\delta}$.
	\smallskip

		\item[\eqref{eq10}:] 
		Follows from \equ{uniform} with $K_1 = 1$.
	 \smallskip

\item[\eqref{toinfty}:] Again from \equ{uniform}, for all $m\in \mathbb{N}$ we have
	 \begin{equation*}
	 	\inf_{J\in \mathcal{F}_m}K_J = \inf_{\mathbf{i} \in \{1,\ldots,L\}^m} r_{\mathbf{i}}^{-1} =  r_{\max}^{-m},
	 \end{equation*} which goes to $\infty$ as $m\to \infty$. 
	\smallskip

	 \item[\eqref{qi}:] In view of \equ{uniform}, for all $m\in\N$ one can write
	 \begin{equation*}
		\sum_{J\in \mathcal{F}_m}K_J^{-\delta} =\sum_{\mathbf{i} \in \{1,\ldots,L\}^m} r_{\mathbf{i}}^{\delta}=
		(r_1^\delta+\cdots+r_m^\delta)^m=1.
	 \end{equation*}
	\item[\eqref{conf}:] Also follows from \equ{uniform} with $K_2 = 1$.	\end{enumerate} 

	\smallskip

It is clear that $\{X_i\}_{i\in \mathcal{I}}$ is pseudo-Markov. 
Thus, by Theorem \ref{thm3}, for any function $\psi:\mathbb{N}\to \mathbb{R}^+$ and any Lipshitz function $f:X\to X$, the $f$-twisted recurrence set $R_T^f(\psi)$ satisfies \begin{itemize}
		\item  $\mu\big(R_T^f(\psi)\big)=0$ if $\sum_{n=1}^\infty \psi(n)^\delta<\infty$; 
		\item  $\mu\big(R_T^f(\psi)\big)=1$ if $\sum_{n=1}^\infty \psi(n)^\delta=\infty$.
	\end{itemize}

\section{{Proof of Theorem \ref{thmbeta}}}\label{beta}

{Let $\beta>1$ be a real number and suppose $\sum_{n=1}^\infty \psi(n)=\infty$. In this section, we will consider the system 
\begin{equation*}
	X=[0,1],\quad T=M_\beta, \quad \mu \text{ the $M_\beta$-invariant measure,}
\end{equation*}
and the partition 
 \begin{equation}\label{betapar}
 	\left\{X_i=\left(\frac{i}{\beta},\frac{i+1}{\beta}\right):i=0,1,\ldots,\lfloor\beta\rfloor-1\right\}\cup \left\{X_{\lfloor\beta\rfloor}=\left(\frac{\lfloor\beta\rfloor}{\beta},1\right)\right\}.
 \end{equation}}
{If $\beta$ is an integer, then \eqref{betapar} is pseudo-Markov. Furthermore, if $\beta$ satisfies
\begin{equation*}
	\beta^2-k\beta-\ell=0,\quad \text{for some } k\geq \ell \in \mathbb{N}^*
\end{equation*} then 
\begin{equation*}
	TX_i=X\ \ \forall i<\lfloor\beta\rfloor\quad \text{and}\quad TX_{\lfloor\beta\rfloor}=\left(0,\frac{\ell}{\beta}\right),\end{equation*} and hence \eqref{betapar} is also pseudo-Markov. In both cases, we have \linebreak $\mu\left(R_T^f(\psi)\right)=1$. by Theorem \ref{thm3}.} \ignore{Hence we suppose that $\beta$ is not an integer.} We now prove the general case, which is also proved in \cite{LVW}.\par 
{We will apply Lemmas \ref{local4} and \ref{localqilemma}. We remark that analogous results are also proved in \cite{LVW}; however our lemmas are proved in a more general   abstract setting, do not depend on the actual arithmetic and symbolic coding of the systems, and  have a much weaker assumption on the regularity of the cylinders.} 

 \smallskip
 We first state some facts about $(X,\mu,T)$. \begin{lemma}[\cite{renyi}]\label{lemmar}
	For $(X,\mu,T)$ defined above,  $\mu$ is equivalent to $\Leb$ and $\mu(E)=\frac{1}{\sum_{k=0}^\infty \frac{\{\beta^k\}}{\beta^k}}\sum_{k=0}^\infty\frac{\Leb\left(E\cap [0,\{\beta^k\}]\right)}{\beta^k}$ for all Lebesgue-measurable set $E$. 
\end{lemma}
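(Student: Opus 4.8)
This is Rényi's classical theorem on the absolutely continuous invariant measure of the $\beta$-transformation, so the plan is to re-derive it by the standard Perron--Frobenius (transfer operator) argument and then cite \cite{renyi} for the computations one does not want to reproduce. First I would set up the transfer operator $\mathcal L$ of $M_\beta$ relative to Lebesgue measure: for $h\in L^1([0,1])$,
\[
(\mathcal L h)(x)=\sum_{y\in M_\beta^{-1}\{x\}}\frac{h(y)}{|M_\beta'(y)|}
=\frac1\beta\left(\sum_{j=0}^{\lfloor\beta\rfloor-1}h\Big(\frac{x+j}{\beta}\Big)+\mathbf 1_{[0,\{\beta\})}(x)\,h\Big(\frac{x+\lfloor\beta\rfloor}{\beta}\Big)\right),
\]
and record the standard fact that a probability measure $h\,d\Leb$ is $M_\beta$-invariant precisely when $h\ge 0$, $\int_0^1 h\,d\Leb=1$ and $\mathcal L h=h$.

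Second, I would exhibit the invariant density. Assuming $\beta\notin\N$ (the integer case being trivial, since then $\mu=\Leb$) and writing $C:=\sum_{k\ge 0}\beta^{-k}\{\beta^k\}$, a convergent and strictly positive series, the candidate is $h(x)=C^{-1}\sum_{k\ge 0}\beta^{-k}\mathbf 1_{[0,\{\beta^k\}]}(x)$; one checks at once that $h\in L^\infty$ and $\int_0^1 h\,d\Leb=1$. The heart of the argument is the verification $\mathcal L h=h$: substitute the series into the displayed formula for $\mathcal L h$ and match, term by term, the images of the intervals $[0,\{\beta^k\}]$ under the inverse branches $y\mapsto(y+j)/\beta$. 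Once a single absolutely continuous invariant density has been produced, Rényi's result that $M_\beta$ is ergodic — in fact exact — with respect to it forces uniqueness up to scaling; hence the measure $\mu$ of the statement, being the $M_\beta$-invariant probability absolutely continuous with respect to $\Leb$, must coincide with $h\,d\Leb$, and the asserted formula follows. Equivalence of $\mu$ and $\Leb$ is then automatic from $h\in L^\infty$ together with a positive lower bound on $h$.

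The only genuine obstacle is the fixed-point identity $\mathcal L h=h$, which amounts to careful bookkeeping of the overlaps of the rescaled intervals $\big([0,\{\beta^k\}]+j\big)/\beta$; this is cleanest in the equivalent Parry parametrization, in which the density is written in terms of the forward orbit $\big(M_\beta^k(1)\big)_k$ and the $k=0$ term $\mathbf 1_{[0,1)}$ already supplies the uniform positive lower bound needed for $\mu\sim\Leb$. Since all of this is carried out in \cite{renyi}, in the paper I would simply invoke that reference rather than rewrite the computation.
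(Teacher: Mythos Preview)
Your proposal is correct and ultimately aligns with the paper's treatment: the paper gives no proof at all and simply writes ``For a proof, see \cite[Theorem 1]{renyi}.'' Your sketch of the transfer-operator argument is accurate background, but since you conclude by deferring to \cite{renyi} anyway, both approaches amount to citing R\'enyi's original result.
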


For a proof, see \cite[Theorem 1]{renyi}. \par 
\smallskip
To state the next lemma, we define the \textsl{lexicographical order} $\prec$. For two words $\alpha=(\alpha_1,\alpha_2,\ldots)$ and $\beta=(\beta_1,\beta_2,\ldots)$, we write $\alpha \prec \beta$ if there exists $k\in \mathbb{N}$ so that $\alpha_i=\beta_i$ for all $i\leq k$ and $\alpha_k<\beta_k$. We write $\alpha \preceq \beta$ if $\alpha \prec \beta$ or $\alpha=\beta$. 
Denote the $\beta$-expansion of $1$ by 
	\begin{equation*}
		1=\sum_{n=1}^\infty \frac{\xi_n}{\beta^n}.
	\end{equation*}
If $\{\xi_n\}_n$ is not eventually zero, then we define $\xi_n^*=\xi_n$ for all $n\in \mathbb{N}$. If $\{\xi_n\}_n$ is eventually zero, then 
denote $M_\xi:=\max\{n\in \mathbb{N}:\xi_n\neq 0\}$ and define 
\begin{equation*}
	\xi_n^*=\begin{cases}
		\xi_k & \text{if }n\equiv k\mod{M_\xi},\, k>0\\
		\xi_{M_\xi}-1 & \text{if }n\equiv 0\mod{M_\xi}.
	\end{cases}
\end{equation*}

A classical result says that the right-most cylinder has the maximal coding in lexicographical order, in the following sense:

\begin{lemma}[\cite{parry}]\label{lemmap}
	Let $(X,\mu,T)$ and $\{X_i\}_{i\in \mathcal{I}}$ be defined above, and let 
	\begin{equation*}
		J=X_{i_0}\cap T^{-1}X_{i_1}\cap \cdots \cap T^{-n+1}X_{i_{n-1}}.
	\end{equation*} 
 Then $J$ is nonempty if and only if 
	\begin{equation*}
		(i_j,i_{j+1},\ldots,i_{n-1})\prec (\xi_1^*,\xi_2^*,\ldots), \quad \text{for each }j=0,\ldots, n-1.
	\end{equation*}
	\end{lemma}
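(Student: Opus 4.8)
The plan is to translate the statement ``$J\ne\varnothing$'' into a purely combinatorial fact about $\beta$-expansions and then recognize it as Parry's admissibility criterion. First note that, since every $X_i$ is an interval and $T|_{X_i}$ is affine with slope $\beta$, the set $J$ is an open sub-interval of $[0,1)$; being open, $J$ is nonempty if and only if $\Leb(J)>0$, equivalently (by Lemma~\ref{lemmar}) if and only if $\mu(J)>0$. Moreover, up to a Lebesgue-null set, $J$ coincides with the set of $x\in[0,1)$ whose greedy $\beta$-expansion $x=\sum_{k\ge1}d_k(x)\beta^{-k}$, where $d_k(x)=\lfloor\beta T^{k-1}x\rfloor$, begins with the word $(i_0,i_1,\ldots,i_{n-1})$, because $T^{k}x\in X_{i_k}$ forces $d_{k+1}(x)=i_k$ for $k<n$. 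Hence ``$J\ne\varnothing$'' is equivalent to ``$(i_0,\ldots,i_{n-1})$ is a prefix of the greedy $\beta$-expansion of some point of $[0,1)$'', i.e.\ to the admissibility of this finite word.

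Second, I would make admissibility explicit by tracking how the ``available window'' shrinks along the cylinder. Set $c_0:=1$ and recursively $c_{k+1}:=\min\{1,\ \beta c_k-i_k\}$. An easy induction shows that the image under $T^{k}$ of the partial cylinder $X_{i_0}\cap T^{-1}X_{i_1}\cap\cdots\cap T^{-(k-1)}X_{i_{k-1}}$ equals $[0,c_k)$ up to endpoints; consequently this partial cylinder, intersected with $T^{-k}X_{i_k}$, is nonempty precisely when $[i_k/\beta,(i_k+1)/\beta)\cap[0,c_k)\ne\varnothing$, that is, when $i_k<\beta c_k$. Therefore $J\ne\varnothing$ if and only if $i_k<\beta c_k$ for every $k=0,\ldots,n-1$.

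Third, the remaining task is to put this recursion in closed form and recognize the outcome as the lexicographic condition against $\xi^*$. Here one uses the identity $\sum_{k\ge1}\xi_k^*\beta^{-k}=1$: this is immediate when the greedy expansion of $1$ is infinite, and follows from a short geometric-series computation in the eventually-periodic case, which is exactly what forces the specific definition of $\xi^*$. The key claim, proved by induction on $k$, is: if the word $(i_0,\ldots,i_{k-1})$ has already fallen strictly below $\xi^*$ at some suffix (for some $0\le j\le k-1$ the zero-padded word $(i_j,\ldots,i_{k-1},0,0,\ldots)\prec(\xi^*_1,\xi^*_2,\ldots)$ with a difference within the first $k-j$ coordinates), then $c_k=1$ and the constraint $i_k<\beta c_k=\beta$ is vacuous; otherwise there is a least index $j_0$ from which $(i_{j_0},\ldots,i_{k-1})$ shadows an initial segment of $\xi^*$, and then $c_k=\beta^{\,k-j_0}\bigl(1-\sum_{\ell=1}^{k-j_0}\xi^*_\ell\beta^{-\ell}\bigr)=\sum_{\ell\ge1}\xi^*_{k-j_0+\ell}\beta^{-\ell}$, so that $i_k<\beta c_k$ becomes precisely $(i_{j_0},\ldots,i_k,0,0,\ldots)\prec(\xi^*_1,\xi^*_2,\ldots)$. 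Combining this with the second step yields exactly: $J\ne\varnothing$ if and only if $(i_j,\ldots,i_{n-1})\prec(\xi^*_1,\xi^*_2,\ldots)$ for each $j=0,\ldots,n-1$, which is the assertion. Alternatively, one may simply quote Parry's theorem from \cite{parry} on admissible \emph{infinite} sequences and deduce the finite case by completing $(i_0,\ldots,i_{n-1})$ with trailing zeros, using that $\xi^*$ is not eventually zero.

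The step I expect to be the main obstacle is the third one: correctly managing the ``reset'' of the window $c_k$ each time the digit word falls strictly below $\xi^*$ (and the borderline periodic situation, where the window returns to $1$ without ever exceeding $\xi^*$, saved by the bound $i_k\le\lfloor\beta\rfloor=\xi^*_1$), and translating the numerical inequality $i_k<\beta c_k$ into a lexicographic one without being tripped up by carries in base $\beta$. This is precisely the delicate part of Parry's original analysis, and it is the reason the quasi-greedy expansion $\xi^*$ of $1$, rather than its greedy expansion, is the correct object to compare against.
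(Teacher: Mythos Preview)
Your proposal is correct in outline, but it is worth noting that the paper does not prove this lemma at all: it simply writes ``For a proof, see \cite[Theorem~3]{parry}.'' The result is a classical admissibility criterion for $\beta$-expansions, and the paper treats it as a black box.

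What you do differently is reconstruct Parry's argument from scratch. Your first two steps (identifying $J\ne\varnothing$ with admissibility of the prefix $(i_0,\ldots,i_{n-1})$, and tracking the available window via $c_{k+1}=\min\{1,\beta c_k-i_k\}$) are clean and correct. The third step---turning the numerical inequalities $i_k<\beta c_k$ into the lexicographic condition against $\xi^*$---is, as you say, the delicate one; your sketch has the right structure (reset to $c_k=1$ once a suffix drops strictly below $\xi^*$, otherwise shadowing with $c_k=\sum_{\ell\ge1}\xi^*_{k-j_0+\ell}\beta^{-\ell}$), but making the dichotomy watertight requires a careful induction that you only outline. Your final sentence---quote Parry's theorem for infinite sequences and reduce the finite case by padding with zeros, using that $\xi^*$ is never eventually zero---is the cleanest route and is effectively what the paper does by citation. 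Either way there is no gap; you simply supply what the paper outsources.
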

	
For a proof, see \cite[Theorem 3]{parry}. \par 
\smallskip

\smallskip
For a cylinder $J\in \mathcal{F}_n$ we always have $\Leb(J)\leq \beta^{-n}$. Let us call $J$ \textsl{full} if the upper bound is reached; i.e., $\Leb(J)=\beta^{-n}$. 

\begin{lemma}[\cite{FW}]\label{lemmafw}
	Let $J$ be as in Lemma \ref{lemmap}. If $J$ is nonempty, then 
	\begin{equation*}
		X_{i_0}\cap T^{-1}X_{i_1}\cap \cdots \cap T^{-n+1}X_{j_{n-1}}
	\end{equation*}
	is full for all $j_{n-1}<i_{n-1}$. 
	\end{lemma}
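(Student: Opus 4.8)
The plan is to reduce the statement to an elementary fact about the piecewise-linear map $M_\beta$ restricted to cylinders. Write $I := X_{i_0}\cap T^{-1}X_{i_1}\cap\cdots\cap T^{-(n-2)}X_{i_{n-2}}\in\mathcal{F}_{n-1}$, so that $J = I\cap T^{-(n-1)}X_{i_{n-1}}$ and the candidate set is $J' := I\cap T^{-(n-1)}X_{j_{n-1}}$. Since $T$ is affine and strictly increasing on each $X_i$, an induction on the order shows that every cylinder is a subinterval of $[0,1)$ and that $T^{n-1}|_I$ is affine with slope $\beta^{n-1}$, hence injective, mapping $I$ onto an interval $T^{n-1}I\subset[0,1)$. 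The crucial structural input — classical in $\beta$-expansion theory, going back to Rényi and Parry — is that this image interval always has left endpoint $0$: indeed $\inf I = \sum_{k=1}^{n-1}i_{k-1}\beta^{-k}$ is a limit of points of $I$ and is carried to $0$ by the affine extension of $T^{n-1}$. Thus $T^{n-1}I = (0,\gamma)$ for some $\gamma\in(0,1]$.

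Next I would extract $\gamma$ from the hypothesis. Because $T^{n-1}|_I$ is a bijection of $I$ onto $T^{n-1}I$, the cylinder $J = I\cap T^{-(n-1)}X_{i_{n-1}}$ is nonempty precisely when $T^{n-1}I\cap X_{i_{n-1}}\neq\varnothing$. Since $X_{i_{n-1}}\subset (i_{n-1}/\beta,1)$ with left endpoint $i_{n-1}/\beta$, intersecting the interval $(0,\gamma)$ with it forces $\gamma > i_{n-1}/\beta$.

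Finally, fix $j_{n-1} < i_{n-1}$. Then $j_{n-1}\le i_{n-1}-1\le \lfloor\beta\rfloor-1$, so $X_{j_{n-1}} = (j_{n-1}/\beta,(j_{n-1}+1)/\beta)$ is one of the \emph{long} order-$1$ cylinders, of length exactly $1/\beta$, and its right endpoint satisfies $(j_{n-1}+1)/\beta\le i_{n-1}/\beta<\gamma$. Hence $X_{j_{n-1}}\subset(0,\gamma) = T^{n-1}I$, so $T^{n-1}|_I$ maps $J' = I\cap T^{-(n-1)}X_{j_{n-1}}$ bijectively onto all of $X_{j_{n-1}}$, whence $\Leb(J') = \beta^{-(n-1)}\Leb(X_{j_{n-1}}) = \beta^{-(n-1)}\cdot\beta^{-1} = \beta^{-n}$. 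Since every cylinder of order $n$ is contained in an interval of length $\beta^{-n}$ and hence has $\Leb \le \beta^{-n}$, this says exactly that $J'$ is full.

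The one point deserving care — the main obstacle — is the structural claim that $T^{n-1}$ carries an order-$(n-1)$ cylinder onto an interval with \emph{left} endpoint $0$; this is where the combinatorics of admissible $\beta$-expansions (the quasi-greedy expansion $\xi^*$ and Parry's admissibility criterion, cf.\ Lemma \ref{lemmap}) must be invoked, to be sure that $\inf I$ genuinely equals $\sum_{k=1}^{n-1}i_{k-1}\beta^{-k}$ and that $I$ accumulates at it from the right. Everything else is routine bookkeeping with endpoints of intervals.
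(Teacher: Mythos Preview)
Your argument is correct. The paper does not actually prove this lemma: it simply cites \cite[Lemma~3.2(1)]{FW}. What you have written is precisely the elementary interval-endpoint argument one finds in that reference (or reconstructs from scratch), so there is nothing to compare beyond noting that you have filled in what the paper outsources.

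One small remark on the point you single out as the main obstacle. You do not really need to invoke Parry's admissibility criterion to see that $T^{n-1}I$ is an interval with left endpoint $0$; a direct induction on the order suffices and is cleaner than computing $\inf I$ explicitly. Namely, for any nonempty order-$m$ cylinder $C$ one has $T^{m}C=(0,\gamma_C)$ for some $\gamma_C\in(0,1]$: if $C=C'\cap T^{-(m-1)}X_{k}$ with $C'\in\mathcal{F}_{m-1}$ and, inductively, $T^{m-1}C'=(0,\gamma')$, then $T^{m-1}C=(0,\gamma')\cap X_{k}$ is an interval with left endpoint $k/\beta$, and $T$ sends $k/\beta$ (in the limit from the right) to $0$. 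With that structural fact in hand, the rest of your proof---that $\gamma>i_{n-1}/\beta$, that $X_{j_{n-1}}\subset(0,\gamma)$, and the length computation $\Leb(J')=\beta^{-n}$---is exactly right.
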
 

For a proof, see \cite[Lemma 3.2(1)]{FW}. 
	
\begin{proposition}\label{fullprop}
	For all $n\in \mathbb{N}$ and for all $J\in \mathcal{F}_n$ with $J\neq \varnothing$, there exists some full cylinder $I\in \mathcal{F}_m$ with 
	\begin{equation*}
		I\subset J \quad \text{and}\quad \Leb(I)\geq \frac{\Leb(J)}{\beta}.
	\end{equation*}
\end{proposition}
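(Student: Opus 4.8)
The statement asks: given a nonempty cylinder $J = X_{i_0}\cap T^{-1}X_{i_1}\cap\cdots\cap T^{-n+1}X_{i_{n-1}}\in\mathcal F_n$, find a \emph{full} cylinder $I$ (of some order $m$, presumably $m=n+1$) with $I\subset J$ and $\Leb(I)\ge \Leb(J)/\beta$. Since $I\subset J$ and $I$ has order $m=n+1$, we must have $I = X_{i_0}\cap T^{-1}X_{i_1}\cap\cdots\cap T^{-n+1}X_{i_{n-1}}\cap T^{-n}X_{i_n}$ for a suitable choice of the last digit $i_n$, i.e.\ $I$ is one of the order-$(n+1)$ subcylinders obtained by refining $J$ one more step. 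The goal is to show one of these refinements is full, and that $\Leb(I)\ge \Leb(J)/\beta$.

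\textbf{Key steps.} First I would analyze $T^{n-1}J$. By Lemma \ref{lemmap}, nonemptiness of $J$ is governed by the lexicographic conditions $(i_j,\dots,i_{n-1})\prec(\xi_1^*,\xi_2^*,\dots)$ for all $j$; in particular $T^{n-1}J = X_{i_{n-1}}$ if the tail condition is strict, and in general $T^{n-1}J$ is an interval of the form $(i_{n-1}/\beta,\,c)$ for some $c\le (i_{n-1}+1)/\beta$, with $c$ determined by how close the coding of $J$ is to the critical sequence $\xi^*$. So $\Leb(T^{n-1}J)\in(0,1/\beta]$ and hence $\Leb(J) = \beta^{-(n-1)}\Leb(T^{n-1}J)\le\beta^{-n}$ (consistent with the stated bound). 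Now refining $J$ by one more application of $T$: the subcylinders of order $n+1$ inside $J$ correspond to intersecting $T^{n-1}J$ with the order-$2$ structure, i.e.\ to the cylinders $X_{i_{n-1}}\cap T^{-1}X_{i_n}$ that meet $J$ after the shift. Among the possible last digits $i_n$, Lemma \ref{lemmafw} tells us: if $X_{i_0}\cap\cdots\cap T^{-n+1}X_{i_{n-1}}$ is nonempty, then replacing its last digit by any smaller value produces a \emph{full} cylinder --- but as noted this gives a sibling, not a subcylinder. To get a subcylinder I instead use the fact that $J$ itself, followed by a sufficiently small last digit $i_n$, yields a full order-$(n+1)$ cylinder: concretely, the cylinder $I := X_{i_0}\cap\cdots\cap T^{-n+1}X_{i_{n-1}}\cap T^{-n}X_{0}$ (last digit $0$, or more generally any digit $i_n$ with $(i_n,\xi_1^*,\xi_2^*,\dots)\prec(\xi_1^*,\xi_2^*,\dots)$ and such that the full tail-condition of Lemma \ref{lemmap} is satisfied) is nonempty, is a subcylinder of $J$, and is full precisely when its coding stays strictly below $\xi^*$ at every suffix --- which, for a small enough last digit, reduces to the corresponding condition already known for $J$.

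\textbf{The estimate on the measure.} Once a full subcylinder $I\subset J$ of order $n+1$ is located, we have $\Leb(I)=\beta^{-(n+1)}$. It remains to check $\beta^{-(n+1)}\ge\Leb(J)/\beta$, i.e.\ $\beta^{-n}\ge\Leb(J)$, which is exactly the universal upper bound $\Leb(J)\le\beta^{-n}$ recorded just before the proposition. So the measure inequality is automatic once fullness and containment are secured. Thus the proof reduces to: (a) writing down an explicit admissible last digit $i_n$ --- I expect $i_n=0$ works whenever $J\neq\varnothing$, since prepending a block to a legal $\beta$-expansion keeps it legal, and a string ending in $0$ followed by arbitrary legal continuation is admissible and the cylinder is full --- and (b) invoking Lemma \ref{lemmap} (for nonemptiness/admissibility of $I$) together with the characterization of full cylinders via $\Leb = \beta^{-m}$. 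The containment $I\subset J$ is immediate from the defining intersection having one extra constraint.

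\textbf{Main obstacle.} The subtle point is ensuring that a \emph{full} order-$(n+1)$ subcylinder of $J$ actually exists: one must confirm that appending the digit $0$ (or the appropriate small digit) to the coding of $J$ produces a cylinder whose every suffix is lexicographically strictly below $\xi^*$ --- the criterion from Lemma \ref{lemmap}/Lemma \ref{lemmafw} for being full --- rather than merely nonempty. Equivalently, one needs that $T^n I = X$, i.e.\ the last refinement step maps onto the whole interval. The suffix conditions for $I$ that involve only the original digits $(i_j,\dots,i_{n-1})$ are inherited from $J$'s nonemptiness, and the new suffix $(i_n)=(0)$ and the empty suffix are trivially admissible with strict inequality as long as $\xi_1^*\ge 1$ (which holds since $\beta>1$). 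Handling the degenerate boundary case where $J$ sits against the critical fiber --- so that $i_{n-1}$ is ``maximal'' and $T^{n-1}J$ is a proper subinterval of $X_{i_{n-1}}$ --- requires care: there one checks that choosing last digit $0$ still lands inside $T^{n-1}J$ because $0/\beta$ is the left endpoint of $X_{i_{n-1}}$ and $T^{n-1}J$ always contains a left-neighborhood of that endpoint. I expect this boundary analysis, cleanly packaged via the lexicographic criterion, to be the only nonroutine part.
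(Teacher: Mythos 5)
Your plan hinges on the claim that one can always find a \emph{full} cylinder of order $m=n+1$ inside $J$, specifically $I = J\cap T^{-n}X_0$, and that the measure bound then follows trivially from $\Leb(J)\le\beta^{-n}$. That claim is false in general, and this is the genuine gap in the argument. The cylinder $J\cap T^{-n}X_0$ is full of order $n+1$ if and only if $T^nJ\supset X_0$, equivalently $\Leb(J)\ge\beta^{-(n+1)}$. But a nonempty $J\in\mathcal F_n$ can satisfy $\Leb(J)<\beta^{-(n+1)}$ whenever its coding hugs the critical sequence $\xi^*$ near the end: then $T^nJ=(0,\beta^n\Leb(J))$ is a small \emph{proper} subinterval of $X_0$, so $J\cap T^{-n}X_0=J$ is not a proper refinement and is not full. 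For instance, taking $\beta$ with $\beta^5=\beta^4+\beta^3+1$ (so $\xi=(1,1,0,0,1,0,\dots)$) and $J$ the order-$2$ cylinder with digits $(1,1)$, one finds $T^2J=(0,\beta^2-\beta-1)\subsetneq X_0$, and indeed one has to append \emph{three} zeros before the image escapes $X_0$. You touch on this issue (``the degenerate boundary case ... requires care''), but you conclude that a single small digit resolves it, which it does not; and the Fan--Wang lemma (Lemma \ref{lemmafw}) cannot be invoked for $I=J\cap T^{-n}X_0$ precisely because no larger last digit yields a nonempty cylinder when $T^nJ\subset X_0$.

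The paper's proof instead appends a \emph{variable} number of zeros. It defines $k_J$ as the first index at which a digit $\ge 1$ becomes admissible after $J$'s coding (Lemma \ref{lemmap} guarantees $k_J<\infty$), observes that during the ``stuck'' phase $J$ already coincides with an order-$(n+k_J)$ cylinder with a string of trailing zeros, and then shows (via Lemma \ref{lemmafw}) that one more zero produces a full cylinder $I$ of order $n+k_J+1$, so $\Leb(I)=\beta^{-(n+k_J+1)}$. The measure bound $\Leb(I)\ge\Leb(J)/\beta$ is then \emph{not} automatic from the crude inequality $\Leb(J)\le\beta^{-n}$; it requires the sharper estimate $\Leb(J)\le\beta^{-(n+k_J)}$, which is exactly what is gained by recognizing $J$ as a deeper cylinder with trailing zeros. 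To repair your argument you would need to replace ``take $m=n+1$'' with this iterative escape mechanism and supply the sharper measure estimate on $J$.
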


{A similar inequality is proved in \cite{LVW}. }

\begin{proof}
 Let $J\in \mathcal{F}_n$ with $J\neq \varnothing$.	By Lemma \ref{lemmap}, the set 
	\begin{equation*}
		\mathcal{N}_J:=\left\{k\in \mathbb{N}:J\cap \bigcap_{j=0}^{k} T^{-n-j+1}X_0\cap T^{-n-k}X_1\neq \varnothing\right\}
	\end{equation*} is nonempty. Let $k_J=\inf \mathcal{N}_J$. If $k_J=0$, then trivially we have \linebreak $
		J\cap \bigcap_{j=0}^{k_J} T^{-n-j+1}X_0=J$. Now suppose $k_J>0$.
	 Assume
	 \begin{equation*}
	J\supsetneq J\cap \bigcap_{j=0}^{k_J} T^{-n-j+1}X_0.
	 \end{equation*} Then there exists some $1\leq i<n-1$ such that 
	 \begin{equation}
	J\cap \bigcap_{j=0}^{k_J-1} T^{-n-j+1}X_0\cap T^{-n-k_J+1}X_i\neq \varnothing. \label{eqbad}
	 \end{equation} 
	 But by Lemma \ref{lemmafw}, $i=1$ must satisfy \eqref{eqbad}, which contradicts the minimality of $k_J$ in $\mathcal{N}_J$, so 
	 \begin{equation*}
	 	J\cap \bigcap_{j=0}^{k_J} T^{-n-j+1}X_0=J. 
	 \end{equation*} 
	 By Lemma \ref{lemmafw}
	\begin{equation*}
		\Leb\left( J\cap \bigcap_{j=0}^{k_J+1} T^{-n-j+1}X_0\right)=\beta^{-n-k-1}.
	\end{equation*}
	On the other hand, 
	\begin{equation*}
		 \Leb\left(J\cap \bigcap_{j=0}^{k_J} T^{-n-j+1}X_0\right)\leq \beta^{-n-k}, 
	\end{equation*} hence
	\begin{equation*}
		\Leb(J)\leq \beta \Leb\left( J\cap \bigcap_{j=0}^{k_J+1} T^{-n-j+1}X_0\right). 
	\end{equation*}
	Taking $I=J\cap \bigcap_{j=0}^{k_J+1} T^{-n-j+1}X_0$, we finish the proof. 
\end{proof}

\ignore{We then denote the full cylinder $ J\cap \bigcap_{j=0}^{k_J+1} T^{-n-j+1}X_0$ by $I_J$. }

\smallskip

Note that $K_J=\beta^n$ for all $J\in \mathcal{F}_n$. Now we can prove Theorem \ref{thmbeta}. 

\begin{proof}[Proof of Theorem \ref{thmbeta}]
	Sections 3.1--3.2 of \cite{hussain} together with Remark \ref{comparison} show that the system satisfies \eqref{ar}--\eqref{conf}. By Lemma \ref{local4}, there exists positive constants $s_1,s_3$ so that for all $B=(x,r)$ with $r<\frac{1}{2}$ 
	and for all $n>n_0$, 
	\begin{equation}\label{eq7.1}
		s_1\mu(B)\left(\psi(n)^\delta-a_n\right)\leq \mu(B\cap A_n)\leq s_3\mu(B)\left(\psi(n)^\delta+a_n\right).
	\end{equation} \par 
	By Lemma \ref{lemmar}, there exist a constant $\alpha>0$ such that 
	\begin{equation*}
		\frac{1}{\alpha}\Leb(E)\leq \mu(E)\leq \alpha \Leb(E), \quad \text{for all  Lebesgue-measurable  }E\subset X.
	\end{equation*} Suppose $I\in \mathcal{F}_q$ and $\Leb(I)=\beta^{-q}$. Then by Lemma \ref{localqilemma}, for all $n>m>q>m_0$, 
	\begin{align*}
		\mu(I\cap A_m\cap A_n)\lesssim &\sum_{\substack{J\in \mathcal{F}_m\\
		J\subset I}}\beta^{-m}\left(\psi(m)^\delta\psi(n)^\delta+a_{n-m}\psi(n)^\delta+a_n\psi(m)^\delta\right)\\
		\leq& \ \Leb(I) \left(\psi(m)^\delta\psi(n)^\delta+a_{n-m}\psi(n)^\delta+a_n\psi(m)^\delta\right)\\
		\leq & \ \alpha \mu(I) \left(\psi(m)^\delta\psi(n)^\delta+a_{n-m}\psi(n)^\delta+a_n\psi(m)^\delta\right),
	\end{align*}
	so there exists a constant $s_2$ such that 
	\begin{equation}\label{eq7.2}
		\mu(I\cap A_m\cap A_n)\leq s_2 \mu(I) \left(\psi(m)^\delta\psi(n)^\delta+a_{n-m}\psi(n)^\delta+a_n\psi(m)^\delta\right).
	\end{equation}\par 
	Note that in this system  all cylinders are intervals, so by \eqref{eq7.1} and \eqref{eq7.2} together with Lemma \ref{posmeas}, there exists a constant $\gamma>0$ so that for all $q>m_0$ and for all cylinders $I\in \mathcal{F}_q$ with $\Leb(I)=\beta^{-q}$, we have 
	\begin{equation*}
		\mu\left(I\cap R_f^T(\psi)\right)\geq \gamma\mu(I). 
	\end{equation*}\par 
By Proposition \ref{fullprop}, for all cylinders $J\in \mathcal{F}_n$, there exists a cylinder $I\subset J$ with $I\in \mathcal{F}_q$, $\Leb(I)=\beta^{-q}$, and $\Leb(I)\geq \frac{1}{\beta}\Leb(J)$. If $n>m_0$, then
\begin{equation*}
	\mu\left(J\cap R_f^T(\psi)\right)\geq \mu\left(I\cap R_f^T(\psi)\right)\geq \gamma\mu(I)\geq \frac{\gamma}{\alpha^2\beta}\mu(J). 
\end{equation*} \par 
Now take $B(x,r)\subset X$ with $r<1$. Take $n\in \mathbb{N}$ with $n>-\log_\beta \frac{r}{4}$. Then for all $J\in \mathcal{F}_n$, $\diameter(J)\leq 2\beta^{-n}< {r}/{2}$, and
\begin{equation*}
	B\left(x, {r}/{2}\right)\subset \bigsqcup_{\substack{J\in \mathcal{F}_n\\ J\cap B\left(x, {r}/{2}\right)\neq \varnothing}}J \subset B(x,r).
\end{equation*}
Hence 
\begin{align*}
	\mu\left(B(x,r)\cap R_f^T(\psi)\right)\geq & \ \mu\left( \bigsqcup_{\substack{J\in \mathcal{F}_n\\ J\cap B\left(x, {r}/{2}\right)\neq \varnothing}}J\cap R_f^T(\psi)\right)
	\geq \sum_{\substack{J\in \mathcal{F}_n\\ J\cap B\left(x, {r}/{2}\right)\neq \varnothing}} \frac{\gamma}{\alpha^2\beta}\mu(J)\\
	\geq &  \ \frac{\gamma }{\alpha^2\beta}\mu\big(B(x, {r}/{2})\big)
	\geq  \frac{\gamma 
	}{2\alpha^4\beta
	} \mu\big(B(x,r)\big).
\end{align*}
Thus it follows from the Lebesgue Density Theorem (Theorem \ref{ldt}) that $\mu\big(R_T^f(\psi)\big)=1$.		 \end{proof}

\end{document}